\renewcommand{\H}{\mathbb{H}}
\newcommand{\B}{\mathbb{B}}
\newcommand{\M}{\mathbb{M}}
\newcommand{\R}{\mathbb{R}}
\renewcommand{\S}{\mathbb{S}}
\newcommand{\V}{\mathbb{V}}
\newcommand{\W}{\mathbb{W}}
\newcommand{\cB}{\mathcal{B}}
\newcommand{\cH}{\mathcal{H}}
\newcommand{\cL}{\mathcal{L}}
\newcommand{\cN}{\mathcal{N}}
\newcommand{\cS}{\mathcal{S}}
\newcommand{\up}{\upsilon}
\newcommand{\sm}{\setminus}
\newcommand{\diam}{\mbox{\rm diam}}
\newcommand{\spn}{\mbox{\rm span}}
\newcommand{\Lie}{\mathrm{Lie}}
\newcommand{\res}{\mbox{\LARGE{$\llcorner$}}}
\newcommand{\der}{\partial}
\newcommand{\bu}{\mbox{\bf 1}}
\DeclareMathOperator{\Tan}{Tan}
\newcommand{\beqas}{\begin{eqnarray*}}
\newcommand{\eeqas}{\end{eqnarray*}}
\newcommand{\beqa}{\begin{eqnarray}}
\newcommand{\eeqa}{\end{eqnarray}}
\newcommand{\beq}{\begin{equation}}
\newcommand{\eeq}{\end{equation}}
\newcommand{\bce}{\begin{center}}
\newcommand{\ece}{\end{center}}
\newcommand{\pa}[1]{\left( #1 \right)}               
\newcommand{\set}[1]{\left\{ #1 \right\}}            
\newcommand{\ban}[1]{\left\langle  #1 \right\rangle}  
\newtheorem{The}{Theorem}[section]
\newtheorem{Lem}[The]{Lemma}
\newtheorem{Def}[The]{Definition}
\newtheorem{Pro}[The]{Proposition}
\newtheorem{Cor}[The]{Corollary}
\newtheorem{Exa}[The]{Example}
\newtheorem{Con}{Conjecure}
\newcommand{\bt}{\begin{The}}
\newcommand{\et}{\end{The}}
\newcommand{\bl}{\begin{Lem}}
\newcommand{\el}{\end{Lem}}
\newcommand{\bd}{\begin{Def}\rm}
\newcommand{\ed}{\end{Def}}
\newcommand{\br}{\begin{Rem}\rm}
\newcommand{\er}{\end{Rem}}
\newcommand{\bpr}{\begin{Pro}}
\newcommand{\epr}{\end{Pro}}
\newcommand{\bc}{\begin{Cor}}
\newcommand{\ec}{\end{Cor}}
\newcommand{\bj}{\begin{Con}}
\newcommand{\ej}{\end{Con}}
\newcommand{\bex}{\begin{Exa}}
\newcommand{\eex}{\end{Exa}}
\newtheorem{teo}{Theorem}[section]
\newtheorem{prop}[teo]{Proposition}
\newtheorem{cor}[teo]{Corollary}
\newtheorem{lem}[teo]{Lemma}
\theoremstyle{definition}
\newtheorem{deff}{Definition}[section]
\newtheorem{Remark}{Remark}[section]
\begin{document}

\title[Area formula in Heisenberg groups]
{Area formula for regular submanifolds of low codimension in Heisenberg groups}

\author{Francesca Corni}
\address{Dip.to di Matematica, Universit\`a di Bologna, Piazza di Porta San Donato, 5, 40126, Bologna, Italy}
\email{francesca.corni3@unibo.it}
\author{Valentino Magnani}
\address{Dip.to di Matematica, Universit\`a di Pisa, Largo Bruno Pontecorvo 5, 56127, Pisa, Italy}
\email{valentino.magnani@unipi.it}

\thanks{The second author was supported by the University of Pisa, Project PRA 2018 49.}

\subjclass[2010]{Primary 28A75; Secondary 53C17, 22E30}

\date{\today}

\keywords{Heisenberg group, area formula, spherical measure, centered Hausdorff measure, intrinsic differentiability, chain rule}

\begin{abstract}
We establish an area formula for the spherical measure of intrinsically
regular submanifolds of low codimension in Heisenberg groups. 
The spherical measure is constructed by an arbitrary homogeneous distance.
Among the arguments of the proof, we point out the differentiability properties of intrinsic graphs and a chain rule for intrinsic differentiable functions.
\end{abstract}

\maketitle

\tableofcontents
\section{Introduction}

Research in the Analysis and Geometry of simply connected nilpotent Lie groups has spread into several directions, especially in the last decade. Carnot groups, or stratified groups equipped with a homogeneous left invariant distance, are an important class of these nilpotent groups, which are metrically different from Euclidean spaces or Riemannian manifolds, still maintaining a rich algebraic and metric structure. 

Our aim is to compute the area of submanifolds in the Heisenberg
group $\H^n$, that represents the simplest model of 
noncommutative stratified group. For different classes of $C^1$ smooth submanifolds area formulas are available, see \cite{Magnani2019} and the references therein. The question has new difficulties when we consider ``intrinsic regular submanifolds'' of $\H^n$, that need not be $C^1$ smooth nor Lipschitz with respect to the Euclidean distance, \cite{KirSer04}.

On the other hand, they are suitable level sets of continuously differentiable functions from $\H^n$ to $\R^k$.
The differentiability here is understood with respect to the group operation and dilations, i.e.\ the so-called Pansu differentiability.
Precisely, these level sets are defined when $1 \leq k \leq n$ and the differential of the defining function is surjective.
They are called $\H$-regular surfaces of low codimension in $\H^n$ (Definition \ref{Hregularsurface}). 
These special submanifolds in $\H^n$ and their characterizations have been studied under different perspectives. We mention for instance the papers \cite{ASCV06}, \cite{Arena}, \cite{BSC10DistSol}, \cite{BSC10IntReg}, \cite{Areaformula}, along with the lecture notes \cite{Notesserra}
and references therein. 
An implicit function theorem, proved in \cite{Areaformula}, states that every $\H$-regular surface can be locally seen as an intrinsic graph with respect to a special semidirect factorization (Definition \ref{intgraph}). 
Although the parametrizing mapping of the $\H$-regular surface
is not Lipschitz continuous in the Euclidean sense, 
in \cite{Arena} the authors proved that it is uniformly intrinsic differentiable (Definition~\ref{uniformintdiff}). 
Indeed, uniform intrinsic differentiability for maps acting between
suitable factorizing homogeneous subgroups has been 
largely studied, also in a broader framework and from the viewpoint of
nonlinear first order systems of PDEs, \cite{ADDDLD20pr}, \cite{ADDD20pr}, \cite{Intsurfaces}, \cite{DiDonatoArt}, \cite{Artem}.

We consider a {\em vertical subgroup} $\W$ and a 
{\em horizontal subgroup $\V$} (Definition~\ref{df:horver}). 
We assume that $\H^n=\W\rtimes\V$ (Definition~\ref{def:semidir})
and we fix a {\em parametrized $\H$-regular surface $\Sigma$ with respect to $(\W,\V)$}, parametrized by $\phi$ and with defining function $f$ (Definition~\ref{def:param}).
The following measure $\mu$ can be associated to $\Sigma$.
For every Borel set $B\subset\Sigma$, we have
\begin{equation}\label{eq:areamuIntro}
	\mu(B)= \Vert V \wedge N \Vert_g \int_{\Phi^{-1}(B)} \frac{J_Hf(\Phi(n))}{J_\V f(\Phi(n))}  \  d \mathcal{H}_E^{2n+1-k} (n),
\end{equation}
where the Jacobians $J_Hf$ and $J_\V f$ are 
defined in \eqref{def:HJac} and \eqref{def:VJac},
$\Phi(n)=n\phi(n)$ is the graph map associated
to $\phi$ and $\cH^{2n+1-k}_E$ is the Euclidean Hausdorff measure.
The factor $\Vert V \wedge N \Vert_g$ takes into account
the ``angle'' between the multivectors $N$ and $V$, that are
associated with the domain and the codomain of the implicit mapping, 
respectively.

The measure in \eqref{eq:areamuIntro} was introduced in
\cite{Areaformula}, where the authors proved that it is equal
to the centered Hausdorff measure restricted to $\Sigma$.
Precisely, \cite[Theorem~4.1]{Areaformula}
has been revised in \cite[Theorem 4.50]{Notesserra}, 
using a metric area formula for the centered Hausdorff measure, \cite{FSSC8}.
The question of finding an area formula for the spherical measure of a low codimensional $\H$-regular surface remained unanswered.
The present paper settles this question, proving an area formula
for the spherical measure of $\Sigma$ in terms of the 
measure $\mu$.

Let $d$ be a fixed homogeneous distance in $\H^n$ and let us consider
the spherical measure $\cS^{2n+2-k}$ with respect to $d$,
according to \eqref{eq:Salpha}. 
We can associate a geometric constant $\beta_d(\Pi)$ 
to a $p$-dimensional subspace $\Pi$ and a distance $d$,
that is called {\em spherical factor}.
Essentially, it represents the maximal $p$-dimensional area
of the intersections of $\Pi$ with metric unit balls whose centers
are suitably close to the origin (Definition \ref{def:sphfactor}).
In our area formula, the spherical factor is computed for the homogeneous 
tangent cones $\Tan(\Sigma,x)$ of $\Sigma$ at the points $x\in\Sigma$ (Definition~\ref{def:tan}). 
Theorem~\ref{Maintheorem} establishes the 
``upper blow-up'' of the measure $\mu$, proving that the spherical factor 
of $\Tan(\Sigma,x)$ equals the $(2n+2-k)$-spherical Federer density of $\mu$ 
at $x\in\Sigma$ (Definition \ref{def:sphefederer}), namely
\begin{equation}\label{eq:equalitydensityspherical}
	\theta^{2n+2-k}( \mu, x)= \ \beta_{d}(\Tan(\Sigma,x)).
\end{equation}
The previous equality represents the central technical tool of the paper. 
Indeed, if we combine Theorem~\ref{Maintheorem} and the metric area formula of Theorem~\ref{abstractdiff}, we immediately obtain our main result,
that is the following area formula for the spherical measure.
\begin{teo}[Area formula]\label{teo:areaformulagenerale}
If $\Sigma$ is a parametrized $\H$-regular surface with respect
to $(\W,\V)$, then for every Borel set $B \subset \Sigma $ we have
\begin{equation}\label{eq:areadgenerica}
\mu(B)= \int_B \beta_{d}(\Tan(\Sigma,x)) \ d \mathcal{S}^{2k+2-k}(x),
\end{equation}
where the measure $\mu$ is defined in \eqref{eq:areamuIntro}.
\end{teo}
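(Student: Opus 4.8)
The plan is to obtain \eqref{eq:areadgenerica} as an immediate consequence of the two results already established and explicitly flagged in the discussion preceding the statement, namely the pointwise density identification of Theorem~\ref{Maintheorem} and the abstract metric area formula of Theorem~\ref{abstractdiff}. The latter is a measure-theoretic area formula for the spherical measure: it represents a suitable Borel measure as the integral of its $(2n+2-k)$-spherical Federer density against $\cS^{2n+2-k}$. Applying it to $\mu$ gives
\begin{equation*}
\mu(B)=\int_B \theta^{2n+2-k}(\mu,x)\ d\cS^{2n+2-k}(x)
\end{equation*}
for every Borel set $B\subset\Sigma$, and substituting the density computed in Theorem~\ref{Maintheorem}, that is the equality \eqref{eq:equalitydensityspherical}, into this representation yields exactly \eqref{eq:areadgenerica} (after the evident correction of the exponent $2k+2-k$ to $2n+2-k$).

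Before invoking Theorem~\ref{abstractdiff} I would verify that $\mu$ and $\Sigma$ satisfy its structural hypotheses. The measure $\mu$ of \eqref{eq:areamuIntro} is Borel regular and concentrated on $\Sigma$, and by its defining integral over $\Phi^{-1}(B)$ it is finite on the image under $\Phi$ of every bounded Borel set, hence $\sigma$-finite. Using the earlier identification of $\mu$ with the centered Hausdorff measure restricted to $\Sigma$, established in \cite{Areaformula} and \cite{Notesserra}, together with the comparability of the centered Hausdorff and spherical measures built from the same homogeneous distance $d$, I would deduce that $\cS^{2n+2-k}\res\Sigma$ is $\sigma$-finite and that $\mu$ is absolutely continuous with respect to $\cS^{2n+2-k}$. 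These are precisely the ingredients that let the Federer-density area formula of Theorem~\ref{abstractdiff} apply to $\mu$ on every Borel subset of $\Sigma$.

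The remaining point requiring attention is measurability, needed so that the right-hand side of \eqref{eq:areadgenerica} is well defined. I would check that the map $x\mapsto\beta_{d}(\Tan(\Sigma,x))$ is Borel measurable by combining the continuity of the defining function $f$ and of its intrinsic differential, which govern $\Tan(\Sigma,x)$ through Definition~\ref{def:tan}, with the dependence of the spherical factor on the underlying subspace in Definition~\ref{def:sphfactor}. Alternatively, this measurability is already delivered by \eqref{eq:equalitydensityspherical}, since the spherical Federer density of a Borel measure is automatically a Borel function of $x$; I would exploit whichever route Theorem~\ref{abstractdiff} phrases its hypotheses around.

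The genuinely hard work does not lie in the present statement at all: it is entirely contained in Theorem~\ref{Maintheorem}, whose blow-up analysis identifies the spherical Federer density with the spherical factor $\beta_{d}(\Tan(\Sigma,x))$ of the homogeneous tangent cone. Once that identity and the absolute-continuity and $\sigma$-finiteness conditions above are in hand, the conclusion follows by the single substitution described, with no further estimates. Accordingly the only obstacle I anticipate in this final deduction is purely bookkeeping, namely confirming that the hypotheses of the abstract formula are met by $\mu$ over $\Sigma$, rather than any new analytic difficulty.
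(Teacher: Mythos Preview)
Your proposal is correct and follows exactly the route the paper indicates: the authors state explicitly that \eqref{eq:areadgenerica} is obtained ``immediately'' by combining the upper blow-up of Theorem~\ref{Maintheorem} with the metric area formula of Theorem~\ref{abstractdiff}, and they give no further argument. Your added verification of the structural hypotheses of Theorem~\ref{abstractdiff} (local finiteness of $\mu$ and of $\cS^{2n+2-k}\res\Sigma$, absolute continuity, Borel measurability of the density) is left implicit in the paper; your route through the known identification with the centered Hausdorff measure is a legitimate way to fill that gap.
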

The previous integral formula also shows that the measure $\mu$ does not depend on the factorization $\W\rtimes\V$ and on the defining function $f$
appearing in \eqref{eq:areamuIntro}. 
In fact, when the factors $\W$ and $\V$ of the semidirect product of $\H^n$ are orthogonal, then \cite[Theorem~6.1]{Intsurfaces} proves that
the integrand in \eqref{eq:areamuIntro} can be written in terms of intrinsic partial derivatives of the parametrization $\phi$ of $\Sigma$,
namely the defining function $f$ disappears.
Thus, the area formula takes the following form.
\begin{teo}	\label{areaintder}
Let $\Sigma$ be a parametrized $\H$-regular surface with respect to $(\W,\V)$. Let $\phi$ be the parametrization of $\Sigma$
with respect to $(\W,\V)$, according to Definition~\ref{def:param}.
If $\Phi(n)=n\phi(n)$ is the graph mapping and 
$\W$ is orthogonal to $\V$, 
then for every Borel set $B \subset \Sigma $ we have
\begin{equation}		\label{eq:formuladerint}
\int_{ \Phi^{-1}(B)} J^{\phi} \phi(w) \ d \mathcal{H}_E^{2n+1-k}(w)=\int_B \beta_{d}(\Tan(\Sigma,x)) \ d \mathcal{S}^{2k+2-k}(x),
\end{equation}
where $J^{\phi} \phi$ is the natural intrinsic Jacobian of $\phi$
	(Definition \ref{intjacobian}).
\end{teo}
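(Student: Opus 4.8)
The statement is a corollary of the general area formula in Theorem~\ref{teo:areaformulagenerale}, once the integrand of $\mu$ is rewritten intrinsically under the orthogonality hypothesis. The plan is to reduce \eqref{eq:formuladerint} to a single pointwise identity and then to quote the differentiability results already recalled in the Introduction.

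First, I would apply Theorem~\ref{teo:areaformulagenerale}, which for every Borel set $B\subset\Sigma$ already identifies the right-hand side of \eqref{eq:formuladerint} with the measure $\mu(B)$. Hence it suffices to show that, when $\W$ is orthogonal to $\V$, the definition \eqref{eq:areamuIntro} of $\mu$ can be recast as
\[
\mu(B)=\int_{\Phi^{-1}(B)} J^{\phi}\phi(w)\ d\mathcal{H}_E^{2n+1-k}(w).
\]
By \eqref{eq:areamuIntro} this reduces in turn to the pointwise identity
\begin{equation}\label{eq:intjacid}
\Vert V\wedge N\Vert_g\,\frac{J_Hf(\Phi(w))}{J_\V f(\Phi(w))}=J^{\phi}\phi(w),
\end{equation}
to be verified for $\mathcal{H}_E^{2n+1-k}$-almost every $w$ in the domain of $\phi$; once \eqref{eq:intjacid} holds, substitution into \eqref{eq:areamuIntro} gives the left-hand side of \eqref{eq:formuladerint}.

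The core step is \eqref{eq:intjacid}, and here I would invoke \cite[Theorem~6.1]{Intsurfaces}. When the factors $\W$ and $\V$ are orthogonal, that result expresses the weighted ratio on the left of \eqref{eq:intjacid} solely through the intrinsic partial derivatives of $\phi$: the angle factor $\Vert V\wedge N\Vert_g$ and the codomain Jacobian $J_\V f$ recombine with $J_Hf$ so that the defining function $f$ disappears and only the natural intrinsic Jacobian $J^{\phi}\phi$ of Definition~\ref{intjacobian} survives. I would quote this identity directly rather than recompute it.

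The main point to control is the well-posedness of \eqref{eq:intjacid}, namely that the intrinsic partial derivatives defining $J^{\phi}\phi$ exist, and that \eqref{eq:intjacid} holds, for $\mathcal{H}_E^{2n+1-k}$-almost every $w$. This is exactly what the uniform intrinsic differentiability of $\phi$ from \cite{Arena} provides: it yields the intrinsic gradient of $\phi$, hence $J^{\phi}\phi$, almost everywhere on the domain, which is the regularity needed for \eqref{eq:intjacid} and for the integral in \eqref{eq:formuladerint} to make sense. Once this is secured, the conclusion is a formal substitution combined with Theorem~\ref{teo:areaformulagenerale}, so I expect all the genuine content to lie in the cited identity \eqref{eq:intjacid} and in the almost-everywhere existence of the intrinsic gradient.
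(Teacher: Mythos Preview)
Your proposal is correct and follows essentially the same route as the paper: both reduce to Theorem~\ref{teo:areaformulagenerale} and then invoke \cite[Theorem~6.1]{Intsurfaces} to rewrite the integrand of $\mu$ as $J^{\phi}\phi$ under the orthogonality assumption. Two minor refinements: since $\W\perp\V$ one has $\Vert V\wedge N\Vert_g=1$, so that factor is harmless but idle in \eqref{eq:intjacid}; and the paper, rather than merely citing the result, reproduces its key step by constructing a specific defining function $g$ with $Dg(\Phi(m))=[\,\mathbb{I}_k\ \ -D^{\phi}\phi(m)\ \ 0\,]$, from which $J_\V g\equiv 1$ and $J_H g=J^{\phi}\phi$ are read off directly---this also shows the identity holds at \emph{every} point of $U$, not just almost everywhere.
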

At this point, it is worth to give some ideas about the proof
of our main technical tool, that is the ``upper blow-up'' of 
Theorem~\ref{Maintheorem}. This type of blow-up appeared in 
codimension one, to compute the spherical Federer density of the perimeter measure, \cite{Magnani2017}.
In our higher codimensional framework, the proof of the upper blow-up involves some new features. Three key aspects must be emphasized. First, rather unexpectedly, we realize that the intrinsic differentiability of the parametrizing map $\phi$ (Theorem~\ref{t:ArenaSerapioni}) is crucial to establish the limit of the set \eqref{eq:setLambda}.
Second, we prove an ``intrinsic chain rule" (Theorem \ref{t:chain}) that permits us to connect the kernel of $Df$ with the intrinsic differential of $\phi$, according to \eqref{eq:graphChain}. 
However, to make our chain rule work we have slightly modified the well known notion of intrinsic differentiability associated to a factorization,
introducing the {\em extrinsic differentiability} (Definition~\ref{d:newintdif}). 
We will deserve more attention on this notion of differentiability
and its associated chain rule for next investigations, since they may have an independent interest.
Third, we establish a delicate algebraic lemma for computing the Jacobian of projections between two vertical subgroups that are complementary 
to the same horizontal subgroup (Lemma~\ref{l:MVPsubgroups}). 

The area formulas \eqref{eq:areadgenerica} and \eqref{eq:formuladerint}
take a simple form when the homogeneous distance $d$ is invariant under
suitable classes of symmetries.
We refer to $p$-vertically symmetric distances (Definition~\ref{def:vertsymm}) and multiradial distances (Definition~\ref{d:multiradial}).
For instance, the Cygan--Kor\'anyi distance, \cite{Cygan81}, the distances constructed in \cite[Theorem~2]{HebSik90} and 
the distance $d_\infty$ of \cite[Section~2.1]{Areaformula} are examples of multiradial distances.
Furthermore, the sub-Riemannian distance in the first Heisenberg group is 2-vertically symmetric.
Combining Theorem~\ref{teo:areaformulagenerale}, Theorem~\ref{fattorevertical} and Proposition~\ref{multiimpliesvert},
a simpler version of \eqref{eq:areadgenerica} can be immediately established.
\begin{teo}
	\label{Areaformula}
	Let $d$ be either a $(2n+1-k)$-vertically symmetric distance or
	a multiradial distance of $\mathbb{H}^n$. 
	Let $\Sigma$ be a parametrized $\H$-regular surface with respect
	to $(\W,\V)$ and let $\mu$ be defined as in \eqref{eq:areamuIntro}.
	We have that
	\begin{equation}
		\mu= \omega_d(2n+1-k) \mathcal{S}^{2k+2-k} \res \Sigma,
	\end{equation}
	where $\omega_d(2n+1-k)$ is the constant spherical factor
	introduced in Definition~\ref{not:rotpn}.
	Therefore, setting $\mathcal{S}^{2n+2-k}_d= \omega_d(2n+1-k) \mathcal{S}^{2n+1-k}$, we have
	\begin{equation}
		\label{eq5}
		\mathcal{S}_d^{2n+2-k}\res\Sigma =\mu=\Vert V \wedge N \Vert_g \ \Phi_{\sharp} \left(\frac{J_Hf }{J_{\V}f  } \circ \Phi    \right)\mathcal{H}^{2n+1-k}_{E} \res \mathbb{W} .
	\end{equation}
\end{teo}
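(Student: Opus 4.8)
The plan is to obtain this statement as a direct consequence of the general area formula of Theorem~\ref{teo:areaformulagenerale}, by showing that under either symmetry hypothesis on $d$ the integrand in \eqref{eq:areadgenerica} collapses to a constant. Recall that that formula reads $\mu(B)=\int_B \beta_d(\Tan(\Sigma,x))\,d\mathcal{S}^{2n+2-k}(x)$ for every Borel set $B\subset\Sigma$, so the whole argument reduces to identifying the value of the spherical factor $\beta_d(\Tan(\Sigma,x))$ along $\Sigma$.

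First I would record the structural fact that, for a parametrized $\H$-regular surface of codimension $k$, the homogeneous tangent cone $\Tan(\Sigma,x)$ is, at every $x\in\Sigma$, a vertical subgroup of topological dimension $2n+1-k$; indeed it coincides with the kernel of the surjective horizontal differential $Df(x)$, which necessarily contains the center of $\H^n$. Consequently the spherical factor in \eqref{eq:areadgenerica} is only ever evaluated on vertical subgroups of this single fixed dimension, which is exactly the situation in which the symmetry assumptions on $d$ can be exploited.

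Next I would treat the two admissible classes of distances. If $d$ is a multiradial distance, Proposition~\ref{multiimpliesvert} ensures that $d$ is also $(2n+1-k)$-vertically symmetric, so this case is absorbed into the next one. If $d$ is $(2n+1-k)$-vertically symmetric, then Theorem~\ref{fattorevertical} guarantees that $\beta_d(\Pi)$ takes one and the same value on every vertical subgroup $\Pi$ of topological dimension $2n+1-k$, and that this common value is precisely the constant $\omega_d(2n+1-k)$ of Definition~\ref{not:rotpn}. Combining this with the preceding step gives $\beta_d(\Tan(\Sigma,x))=\omega_d(2n+1-k)$ for every $x\in\Sigma$, so substituting into \eqref{eq:areadgenerica} and pulling the constant outside the integral yields $\mu(B)=\omega_d(2n+1-k)\,\mathcal{S}^{2n+2-k}(B)$, that is $\mu=\omega_d(2n+1-k)\,\mathcal{S}^{2n+2-k}\res\Sigma$. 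With the rescaling $\mathcal{S}^{2n+2-k}_d=\omega_d(2n+1-k)\,\mathcal{S}^{2n+2-k}$ this is the first equality of \eqref{eq5}, while the second equality is nothing but the definition \eqref{eq:areamuIntro} of $\mu$ rewritten as an integral against the pushforward measure $\Phi_\sharp\big(\big(\frac{J_Hf}{J_{\V}f}\circ\Phi\big)\mathcal{H}^{2n+1-k}_E\res\W\big)$, since $\Phi^{-1}(B)\subset\W$ and $\Phi$ parametrizes $\Sigma$.

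The main obstacle here is not analytic but structural and essentially bookkeeping: one must verify that the tangent cones of $\Sigma$ are genuinely vertical subgroups of exactly the dimension $2n+1-k$ that matches the symmetry index of $d$, so that Theorem~\ref{fattorevertical} applies uniformly at every point of $\Sigma$. Once this dimensional matching is secured, the constancy of the spherical factor --- and hence the whole statement --- follows at once from the general area formula, with no genuinely new computation required.
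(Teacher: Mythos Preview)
Your proposal is correct and matches the paper's own approach: the authors explicitly state that this theorem follows immediately by combining Theorem~\ref{teo:areaformulagenerale}, Theorem~\ref{fattorevertical}, and Proposition~\ref{multiimpliesvert}, which is precisely the chain of reductions you carry out. The only additional observation you supply---that $\Tan(\Sigma,x)=\ker Df(x)$ is a $(2n+1-k)$-dimensional vertical subgroup---is the routine verification needed to invoke Theorem~\ref{fattorevertical}, and it is implicit in the paper's discussion via Proposition~\ref{prop:Tan}.
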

In the assumptions of the previous theorem, assuming in addition that $\W$ and $\V$ are orthogonal, formula (\ref{eq5}) can be rewritten as follows
\begin{equation}\label{eq:areaparamSymm}
	\mathcal{S}_d^{2n+2-k} \res \Sigma (B)
	=\int_{\Phi^{-1}(B)}   J^{\phi} \phi(w) \ d \mathcal{H}_E^{2n+1-k}(w),
\end{equation}
for any Borel set $B\subset \H^n$, 
where $J^{\phi} \phi$ is the intrinsic Jacobian of $\phi$ (Definition~\ref{intjacobian}).
The form of formula \eqref{eq:areaparamSymm} clearly 
reminds of the Euclidean area formula. 
Indeed the Euclidean Hausdorff measure
$\cH_E^{2n+1-k}$ can be replaced by the
Lebesgue measure $\cL^{2n+1-k}$.

Some additional applications of our results concern the relationship between the spherical measure and the centered Hausdorff measure. 
This study is treated in Section~\ref{sect:area}, where the main result
is the equality between spherical measure and centered Hausdorff measure, assuming that the metric unit ball of the homogeneous distance
is convex (Theorem~\ref{coincidence}).

Concerning the more recent literature, a general form of the area formula can be written for suitably ``$C^1$ smooth" intrinsic graphs in stratified groups, \cite[Theorem~1.1]{JNGV20prArea}, using the Hausdorff measure or
the spherical measure. 
The proof mainly relies on a suitable application of measure theoretic area formulas \cite{Magnani2015}, (see also the developments of \cite{LecMag2020}).
The upper blow-up of the Hausdorff (or spherical) measure of an intrinsic graph leads to a natural notion of ``area factor" (\cite[Lemma~3.2]{JNGV20prArea}),
that formally represents the Jacobian of the graph mapping and
extends the notions of Jacobian introduced in 
\cite{AmbKir2000Rect}, \cite{Kir94} and \cite{Mag}.
The approach of \cite{JNGV20prArea} can be suitably adapted to obtain
area formulas involving the centered Hausdorff meaures, hence
using suitable ``centered area factors".
Then an area formula for the centered Hausdorff measure
of graphs of intrinsic Lipschitz mappings can be obtained
\cite[Theorem~1.3]{AntMer21}, under the assumption on their
a.e.\ intrinsic differentiability.

On the other hand, whenever a metric-algebraic notion of differentiability is available for the parametrization, it is reasonable to connect the measure of its image with its ``suitable differential'' by an explicit formula for the Jacobian, getting a full area formula. Connecting the Jacobian with the differential, hence allowing for an effective computation of the Hausdorff 
(or spherical) measure of a set, has been completely achieved
for intrinsic regular hypersyrfaces in stratified groups.
This result stems from the contribution of many authors,
see \cite{ADDDLD20pr} for the last version of this one-codimensional area formula,
along with the full list of references. 
For one codimensional intrinsic Lipschitz graphs area formulas for the spherical
measure are obtained in \cite{DiD20} for stratified groups of step two,
see also the references therein.

Parametrized intrinsic $\H$-regular submanifolds in Heisenberg groups essentially
represent the first higher codimensional case where 
the differential of the parametrization is connected to
the measure of the submanifold. 
For the centered Hausdorff measure we refer to the works
\cite{Intsurfaces} and \cite{Notesserra}.
The case of the spherical measure is more delicate and relies on the techniques described above, that lead to the upper blow-up of Theorem~\ref{Maintheorem}.
Although the problem of computing the ``area factor"  does not seem an easy task, due to the low regularity of
intrinsic graphs, we believe however that our scheme for the area formula in Heisenberg groups has actually a wider scope of applications.
For this reason, we have left such developments for subsequent investigations.

Finally, we wish to mention that combining Theorem~\ref{teo:areaformulagenerale} and the
deep Rademacher's theorem for intrinsic Lipschitz mappings in Heisenberg groups, \cite[Theorem~1.1]{Vit20pr}, our area formula extends to intrinsic Lipschitz graphs,
\cite[Theorem~1.3]{Vit20pr}.
A similar extension is not automatic in general, 
since an interesting example of nowhere intrinsic differentiable Lipschitz 
graph can be constructed, \cite{JNGV21prNowhere}.

\section{Definitions and preliminary results}\label{sec2}

The next sections will introduce notions, notations and basic tools that will be used
throughout the paper.

\subsection{Coordinates in Heisenberg groups}\label{sect:symplectic}

The purpose of this section is to introduce $(2n+1)$-dimensional Heisenberg groups, along with the special coordinates that allow us to identify $\H^n$ with $\R^{2n+1}$. The Heisenberg group $\mathbb{H}^n$ can be represented as a direct sum of two linear subspaces
\[
\mathbb{H}^n = H_1 \oplus H_2
\]
with dim($H_1$)$=2n$ and dim($H_2$)$=1$, endowed with a symplectic form $\omega$ on $H_1$ and a fixed nonvanishing element $e_{2n+1}$ of $H_2$.
We denote by $\pi_{H_1}$ and $\pi_{H_2}$ the canonical projections on $H_1$ and $H_2$, which are associated to the direct sum.

We can give to $\mathbb{H}^n$ a structure of Lie algebra by setting
\begin{equation}
[ p, q]= \omega(\pi_{H_1}(p),\pi_{H_1}(q)) \ e_{2n+1}.
\end{equation}
Then the Baker-Campbell-Hausdorff formula ensures that 
\begin{equation}
p q = p + q + \frac{[p,q]}{2}
\end{equation}
defines a Lie group operation on $\H^n$.
For $t>0$, the linear mapping 
$\delta_t:\H^n\to\H^n$ such that 
\[
\delta_t(w)=t^k w\quad \text{ if $w \in H_k$},
\]
$k=1,2$, defines {\em intrinsic dilation}.

Given $p \in \H^n$, we denote by $l_p$ the translation by $p$. Any left invariant vector field on $\mathbb{H}^n$ is of the form $ X_v(p)= dl_p(0)(v)$ for any $p\in\H^n$ and some $v \in \mathbb{H}^n$, where we have identified $\H^n$ with $T_0\mathbb{H}^n$.
Through the Baker-Campbell-Hausdorff formula, one can check that the Lie algebra of left invariant vector fields Lie($\mathbb{H}^n$) is isomorphic to the given Lie algebra $(\mathbb{H}^n, [ \cdot, \cdot])$.

We fix a symplectic basis $ (e_1, \dots, e_{2n} )$ of $(H_1,\omega)$, namely
\[
\omega(e_i,e_{n+j})=\delta_{ij},\quad \omega(e_i,e_j)=\omega(e_{n+i},e_{n+j})=0
\]
for every $i,j=1,\ldots,n$, where $\delta_{ij}$ is the Kronecker delta.
Thus, we have obtained a {\em Heisenberg basis} 
\[
\mathcal{B}= ( e_1, \dots, e_{2n+1} ),
\]
that allows us to identify $\mathbb{H}^n$ with $\mathbb{R}^{2n+1}$. The associated linear isomorphism is defined as
\begin{equation}
\label{ident}
\pi_{\mathcal{B}}: \mathbb{H}^n \to \mathbb{R}^{2n+1}, \ \ 
\pi_{\mathcal{B}}(p)= (x_1, \dots, x_{2n+1} )
\end{equation}
for $p=\sum_{j=1}^{2n+1}x_je_j$. 
We can read the given Lie product on $\mathbb{R}^{2n+1}$ as follows
\begin{equation*}
\begin{split}
[ (x_1, \ldots, x_{2n+1}), (y_1, \dots, y_{2n+1})]&= \pi_{\mathcal{B}}([ \sum_{i=1}^{2n+1}x_i e_i, \sum_{i=1}^{2n+1} y_i e_i])\\
&=\pa{0, \ldots, 0,\sum_{i=1}^n (x_i y_{i+n}-x_{i+n} y_i)}
\end{split}
\end{equation*}
then the group product takes the following form on $\mathbb{R}^{2n+1}$
\[
(x_1, \dots, x_{2n+1})  (y_1, \dots, y_{2n+1})= \pa{x_1 + y_1, \ldots, x_{2n+1} + y_{2n+1} + \sum_{i=1}^n \frac{x_iy_{i+n}-x_{i+n} y_i}2}.
\]
Taking into account the previous formula, in our coordinates we obtain the following basis of left invariant vector fields
\begin{equation}
\label{eqbase}
\begin{split}
 X_j(p) &= \partial_{x_j}-\frac{1}{2} x_{j+n} \partial_{x_{2n+1}} \ \ \ \ \ \ j=1, \dots, n\\
 Y_j(p) &= \partial_{x_{n+j}} + \frac{1}{2}x_j \partial_{x_{2n+1}} \ \ \ \ \ \ j=1, \dots, n\\
 T(p)&= \partial_{x_{2n+1}}.
\end{split}
\end{equation}
They clearly constitute a basis $( X_1, \dots, X_{2n+1} )$ of Lie($\mathbb{H}^n$) such that $X_j(0)=e_j$ for every $j=1,\ldots,2n+1$.
Any linear combination of $X_1,\ldots,X_{2n}$ is called a {\em left invariant horizontal vector field of $\H^n$}.

\subsection{Metric structure}\label{sect:metric}

We fix a scalar product $\ban{\cdot,\cdot}$ that makes our Heisenberg basis $\cB= (e_1,\ldots,e_{2n+1})$ orthonormal. In the sequel, any Heisenberg basis will be understood to be orthonormal.
We denote by $|\cdot|$ both the Euclidean metric on $\mathbb{R}^{2n+1}$ and the norm induced by $ \langle \cdot, \cdot \rangle $ on $\mathbb{H}^n$. 
The symmetries of the Heisenberg group $\H^n$ are detected through the isometry 
\[J:H_1\to H_1,\]
that is defined by the Heisenberg basis
\[
J(e_i)=e_{n+i}\quad\text{and}\quad J(e_{n+i})=-e_i
\]
for all $i=1,\ldots,n$. It is then easy to check that
\[
\langle p,q \rangle=\omega(p,Jq) \quad \text{and} \quad
J^2=-I
\]
for all $p,q\in H_1$.

A \textit{homogeneous distance} $d$ on $\mathbb{H}^n$ is a function $d: \mathbb{H}^n \times \mathbb{H}^n \to [0,+\infty)$ such that 
\[
d(zx,zy)=d(x,y)\quad \text{and}\quad  d(\delta_t(x), \delta_t(y))= t d(x,y)
\]
for every $x,y,z \in \mathbb{H}^n$ and $t >0$. 
Any two homogeneous distances are bi-Lipschitz equivalent.
We also introduce the {\em homogeneous norm} $\Vert x \Vert = d(x,0)$, $x\in\H^n$, associated to 
a homogeneous distance $d$. Notice that this norm satisfies
\[
\Vert xy\Vert\le \Vert x\Vert +\Vert y\Vert \quad\text{and}\quad \Vert\delta_r x\Vert=r\Vert x\Vert
\]
for $x,y\in\H^n$ and $r>0$.

By identifying $T_0\H^n$ with $\H^n$ and by left translating the fixed scalar product $\langle \cdot, \cdot \rangle$ on $\mathbb{H}^n$ we obtain a left invariant Riemannian metric $g$ on $\H^n$. Its associated Riemannian
norm is denoted by $\Vert \cdot \Vert_g$.
We may restrict the identification of $T_0\H^n$ with $\H^n$ to the so called horizontal 
subspace, by identifying $H_1$ with 
\[
H_0 \mathbb{H}^n\subset T_0\H^n.
\]
Then the horizontal fiber at $p\in\H^n$ is $H_p\mathbb{H}^n =dl_p(0) (H_0\H^n)$.  The collection of all horizontal fibers constitutes the so-called {\em horizontal subbundle} $H \mathbb{H}^n$.
If we restrict the left invariant metric $g$ to the horizontal subbundle $H \mathbb{H}^n$, we obtain a scalar product on each horizontal fiber, that is the sub-Riemannian metric. This leads in a standard way to the so-called {\em Carnot-Carath\'eodory distance}, or {\em sub-Riemannian distance}, \cite{Gromov1996}, that is an example of homogeneous distance.

\subsection{Differentiability and factorizations}

We have different notions of differentiability in $\H^n$ and general Carnot groups, starting from the notion of Pansu differentiability, \cite{Pansu}.
We fix throughout the paper a homogeneous distance $d$.
Let $\Omega\subset\H^n$ be an open set, let $f : \Omega  \to \mathbb{R}^k$, $x \in \Omega$ and $v \in H_1$.
If there exists 
$$ 
\lim_{t \to 0} \frac{f(x (t v))-f(x)}{t} \in \mathbb{R}^k,
$$
then we say that it is the \textit{horizontal partial derivative} at $x$ along $X_v$, that is
the unique left invariant vector field such that $X_v(0)=v$. The above limit is denoted by
$X_vf(x)$. Notice that $X_v$ is precisely a left invariant horizontal vector field. We say that $f \in C^1_h(\Omega, \mathbb{R}^k)$ if for every $x \in \Omega$ and 
every horizontal vector field $X\in\Lie(\H^n)$ the 
horizontal derivative $Xf(x)$ exists and it is continuous with respect to $x\in\Omega$.

A linear mapping $L:\mathbb{H}^n \to \mathbb{R}^k$ that is homogeneous,
i.e. $tL(v)=L(\delta_t v)$ for all $t>0$ and $v\in\H^n$, is an {\em h-homomorphism},
that stands for ``homogeneous homomorphism''.
If there exists an h-homomorphism $L:\H^n\to\R^k$ that satisfies 
$$
|f(x w)-f(x)-Df(x)(w)|= o ( d(w,0)) \qquad \ \  \text{as} \  \ d(w,0)  \to 0,
$$
then it is unique and it is called the {\em h-differential}, or {\em Pansu differential}, of $f$ at $x$.
We denote it by $Df(x)$. Notice that $f \in C^1_h(\Omega, \mathbb{R}^k)$ if and only if it is everywhere Pansu differentiable and $x \to Df(x)$ is continuous as a map from $\Omega$ to the space of h-homomorphisms, 
see for instance \cite[Section~3]{Magnanicoarea}. 



\begin{deff} 
Let $\Omega\subset\H^n$ be an open set and let $f \in C^1_h(\Omega, \mathbb{R})$. We call \textit{horizontal gradient} of $f$ at $x \in \Omega$ the unique vector $\nabla_Hf(x)$ of $H_1$ such that
$Df(x)(z)=\ban{\nabla_Hf(x),z}$ for every $z\in \H^n$.
\end{deff}

When differentiability meets the factorizations of Heisenberg groups, the notion of intrinsic differentiability comes up naturally, see \cite{Notesserra} for more information. Now we introduce some algebraic properties of factorizations in $\H^n$ in order to define intrinsic differentiability and its basic properties. 

\begin{deff} \label{df:horver}If a Lie subgroup of $\mathbb{H}^n$ is closed under intrinsic dilations, 
we call it a \textit{homogeneous subgroup}. Homogeneous subgroups of $\H^n$ containing $H_2$ are called {\em vertical subgroups}. Homogeneous subgroups contained in $H_1$ are called
{\em horizontal subgroups}.
\end{deff}

It is easy to realize that any homogeneous subgroup of $\H^n$ is either horizontal or vertical.
We also notice that normal homogeneous subgroups of $\H^n$ coincide with vertical subgroups.

\begin{deff}\label{def:semidir} Let $\W$ and $\mathbb{V}$ be a vertical subgroup and a horizontal subgroup of $\H^n$, respectively.
We say that $\H^n$ is the {\em semidirect product} of $\W$ and $\V$ if 
$\mathbb{H}^n= \mathbb{W}  \mathbb{V}$ and $\mathbb{W} \cap \mathbb{V} = \{0\}$. 
In symbols, we write $\H^n=\W\rtimes\V$.
\end{deff}

\begin{deff}\label{d:projWVM}
Let $\M$, $\W$ and $\V$ be homogeneous subgroups of $\mathbb{H}^n$ such that 
\beq\label{eq:factMVs}
\H^n=\M\rtimes \V=\W\rtimes\V.
\eeq
The semidirect product $\W\rtimes\V$ automatically yields the unique projections 
\[
\pi_\W:\H^n\to\W\quad\text{ and }\quad \pi_\V:\H^n\to\V
\]
such that $x=\pi_\W(x)\pi_\V(x)$ for every $x\in\H^n$. If necessary, to emphasize the dependence on the
semidirect factorization we will also introduce the notation
$\pi^{\W,\V}_\W=\pi_\W$ and $\pi^{\W,\V}_\V=\pi_\V$. The same holds for $\M \rtimes \V$.
We define the following restrictions 
\[
\pi_{\W,\M}^{\W,\V}=\pi^{\W,\V}_\W|_{\M}:\M\to \W\quad\text{and}\quad 
\pi_{\M,\W}^{\M,\V}=\pi^{\M,\V}_\M|_{\W}:\W\to \M
\]
\end{deff}

\begin{Remark}
The uniqueness of the factorizations \eqref{eq:factMVs} implies that both restrictions
$\pi^{\W,\V}_{\W,\M}$ and $\pi^{\M,\V}_{\M, \W}$ are invertible and 
\beq\label{eq:invFactor}
\pi_{\W,\M}^{\W,\V}=(\pi_{\M,\W}^{\M,\V})^{-1}.
\eeq
\end{Remark}
If $\H^n=\mathbb{W}\rtimes \V$, then by the local compactness of $\H^n$, it is immediate to observe that
there exists a constant $c_0\in(0,1)$, possibly depending on $\W$ and $\V$, such that for all 
$w \in \mathbb{W}$ and $v \in \mathbb{V}$ the following holds
\begin{equation}\label{eq:estimnormsub}
c_0 \ ( \Vert w \Vert + \Vert v \Vert ) \leq \Vert wv \Vert \leq \Vert w \Vert + \Vert v \Vert .
\end{equation}

\begin{Remark}
Whenever two homogeneous subgroups $\W$ and $\V$ of $\H^n$ satisfy 
\[
\H^n=\W\V\quad\text{and} \quad \W\cap \V=\set{0},
\]
then one of them must be necessarily vertical and the other one must be horizontal. 
\end{Remark} 

Now we recall some results and definitions about intrinsic graphs of functions between two homogeneous subgroups. 
In the sequel $\W$ and $\V$ denotes a vertical subgroup and a horizontal
subgroup, respectively, such that $\H^n=\W\rtimes\V$.
For more information, see \cite{Notesserra}.

\begin{deff}
\label{intgraph}
For a nonempty set $U \subset\W$ and $\phi: U \to \mathbb{V}$
we define the \textit{intrinsic graph of $\phi$} as the set
$$ \mathrm{graph}(\phi)=\{w\phi(w):  w \in U \}.$$
We also introduce the {\em graph map $\Phi: U \to \Sigma$ of $\phi$}
by $\Phi(w)= w\phi(w)$ for all $w\in U$.
\end{deff}

\begin{Remark}
It is important to observe that the notion of intrinsic graph
is invariant with respect to both translations and dilations.
\end{Remark}

To study the action of translations on intrinsic graphs,
we need the following definition.

\begin{deff}
Let us consider $x \in \H^n$. We define $\sigma_x:\W \to \W$ as follows
\[
\sigma_x(w)=\pi_{\W}(l_x(w))= xw(\pi_\V(x))^{-1}
\]
for every $w\in \W$.
Given a set $U \subset \W$ and a function $\phi:U  \to \V$, the {\em translation of $\phi$ at $x$}, $\phi_{x}: \sigma_x(U) \to\V$ is defined as  
\begin{equation}\label{eq:translatedf}
\phi_x(w)=\pi_\V(x)\phi(x^{-1}w\pi_\V(x))=\pi_\V(x)\phi(\sigma_{x^{-1}}(w)).
\end{equation}
\end{deff}

\begin{Remark}
The map $\sigma_x$ is invertible on $\W$
\[
\sigma_{x^{-1}}(w)=x^{-1}w\pi_\V(x^{-1})^{-1}=
x^{-1}w\pi_\V(x)=\sigma_x^{-1}(w).
\]
Then for $w\in \sigma_x(U)$, we may also write
\begin{equation}
\phi_x(w)=\pi_\V(x)\phi(\sigma_{x}^{-1}(w)).
\end{equation}
\end{Remark}

Next we recall the content of \cite[Propositions 3.6]{Arena}.
\begin{prop}
Let $U \subset \W$ be an open set and $\phi: U  \to \V$ be a function. Then we have 
$$ 
l_x(\mathrm{graph}(\phi))= \{ w\phi_x(w) : w \in \sigma_x(U) \}.
$$
\end{prop}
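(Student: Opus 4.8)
The plan is to establish the set equality by proving the two inclusions, both of which reduce to a single direct computation with the group product. The structural facts I will rely on are that $\sigma_x\colon\W\to\W$ is a bijection with inverse $\sigma_{x^{-1}}=\sigma_x^{-1}$, together with the explicit formulas $\sigma_x(w)=xw(\pi_\V(x))^{-1}$ and $\phi_x(w)=\pi_\V(x)\phi(\sigma_{x^{-1}}(w))$ recorded above.

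First I would take an arbitrary point of $l_x(\mathrm{graph}(\phi))$, necessarily of the form $x\,w'\phi(w')$ with $w'\in U$, and set $w=\sigma_x(w')$. Since $\sigma_x$ maps $\W$ into $\W$ and $w'\in U$, we have $w\in\sigma_x(U)\subset\W$, so $w\phi_x(w)$ is a genuine element of the candidate set. I then compute $\phi_x(w)$: because $\sigma_{x^{-1}}(w)=\sigma_x^{-1}(\sigma_x(w'))=w'$, the definition of the translated map gives $\phi_x(w)=\pi_\V(x)\phi(w')$. Consequently
\[
w\,\phi_x(w)=\bigl(xw'(\pi_\V(x))^{-1}\bigr)\bigl(\pi_\V(x)\phi(w')\bigr),
\]
and by associativity of the group product together with the cancellation $(\pi_\V(x))^{-1}\pi_\V(x)=0$ (the group identity), this collapses to $x\,w'\phi(w')$. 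Hence $l_x(\mathrm{graph}(\phi))\subset\{w\phi_x(w):w\in\sigma_x(U)\}$.

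For the reverse inclusion I would run the same computation in the other direction: given $w\in\sigma_x(U)$, write $w=\sigma_x(w')$ with $w'=\sigma_{x^{-1}}(w)\in U$, and the identical cancellation yields $w\phi_x(w)=x\,w'\phi(w')=l_x(w'\phi(w'))\in l_x(\mathrm{graph}(\phi))$. Combining the two inclusions gives the asserted equality.

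The argument is essentially routine, so I expect no serious obstacle; the only point requiring care is the bookkeeping of the noncommutative product. Specifically, one must verify that $\sigma_x(w')$ really lands in $\W$, so that the expression $w\phi_x(w)$ is a well-defined graph point, and one must apply associativity correctly when cancelling the factor $\pi_\V(x)$. Conceptually, the whole statement is just the fact that left translation intertwines the graph maps $\Phi$ and $\Phi_x$ through the reparametrization $\sigma_x$, but the explicit verification above is the most transparent route.
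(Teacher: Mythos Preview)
Your argument is correct: the computation $w\phi_x(w)=\bigl(xw'(\pi_\V(x))^{-1}\bigr)\bigl(\pi_\V(x)\phi(w')\bigr)=xw'\phi(w')$ is exactly what is needed, and the bijectivity of $\sigma_x$ handles both inclusions. Note that the paper does not actually prove this proposition; it merely cites it from \cite{Arena}, so your write-up supplies the (standard) verification that the paper omits.
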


\begin{deff}\label{d:intrinsicDiff}
Let $U \subset \W$ be an open set and let $\phi: U  \to \V$ be a function. Let us take $\bar{w} \in U$ and define $x=\bar{w} \phi(\bar{w})$. The function $\phi$ is intrinsic differentiable at $\bar{w}$ if there exists an h-homomorphism $L: \W \to \V$ such that 
\begin{equation}
\label{intdiff}
d(L(w), \phi_{x^{-1}}(w)) =o(\| w \|)
\end{equation} 
as $w\to 0$.
The function $L$ is called the {\em intrinsic differential} of $\phi$ at $\bar{w}$, it is uniquely defined
and we denote it by $d \phi_{\bar{w}}$.

\end{deff}

\begin{Remark}
By virtue of \cite[Proposition 3.23]{Arena}, in our setting any intrinsic linear function is actually an h-homomorphisms. We also observe that the assumption $\bar w\in U$ implies that 
$0\in\sigma_{x^{-1}}(U)$. In addition $\sigma_{x^{-1}}(U)$ is an open set, hence the limit
\eqref{intdiff} is entirely justified.
\end{Remark}

\begin{Remark} 
By \cite[Proposition~3.25]{Arena}, condition \eqref{intdiff} is equivalent to ask that for all $w \in U$
$$ \| d \phi_{\bar{w}}(\bar{w}^{-1} w)^{-1} \phi(\bar{w})^{-1} \phi(w) \|=o ( \| \phi(\bar{w})^{-1} \bar{w}^{-1} w \phi(\bar{w}) \|) $$ as
$\| \phi(\bar{w})^{-1} \bar{w}^{-1} w \phi(\bar{w}) \| \to 0$.
\end{Remark}

\begin{deff}
\label{uniformintdiff}
Let $U \subset \W$ be an open set and let $\phi: U  \to \V$ be a function. The map $\phi$ is uniformly intrinsic differentiable on $U$ if for any point $\bar{w} \in U$ there exists an h-homomorphism $d\phi_{\bar{w}}: \W \to \V$ such that
\begin{equation}
\lim_{\delta \to 0}\sup_{\|\bar{w}^{-1} w' \| < \delta} \sup_{0< \| w \|< \delta
 } \frac{ d(d \phi_{\bar{w}}(w),\phi_{\Phi(w')^{-1}}(w))}{\| w \|} =0
\end{equation}
where $\Phi$ is the graph map of $\phi$.
\end{deff}


%
The following definition is a slight modification of the notion of intrinsic differentiability.

\begin{deff}\label{d:newintdif}
Let $U\subset\W$ be an open set and let $F: U \to\R^k$ with $u\in U$.  
We choose $v\in \V$ and define $x=uv\in\H^n$ and the corresponding translated function
\[
F^{\V}_{x^{-1}}(w)=F(\sigma_x(w))-F(u)
\]
for $w\in \sigma_{x^{-1}}(U)$.
We say that $F$ is {\em extrinsically differentiable at $u$ with
respect to $(\V,x)$} if there exists an h-homomorphism $L:\W\to\R^k$ such that 
\beq\label{d:extdif}
\frac{|F_{x^{-1}}^{\V}(w)-L(w)|}{\|w\|}\to0\quad\text{as $w \to0$.}
\eeq
	The uniqueness of $L$ allows us to denote it by $d^{\V}_xF$.
\end{deff}
The terminology {\em extrinsic differentiabilty} arises from the fact that the subgroup $\V$ and the point $x$ cannot be detected from the information we have on $F$. In a sense, they are ``artificially added from outside''. 

\begin{Remark} 
If in the previous definition we embed $\R^k$ in $\H^n$, hence
replacing it by $\V$, and choose $v=F(u)$, then $x=uF(u)\in\H^n$ and
we have the equalities
\[
F^\V_{x^{-1}}(w)=F(\sigma_x(w))-v=F_{x^{-1}}(w).
\]
Thus, the numerator of \eqref{d:extdif} 
becomes equivalent to $d(F_{x^{-1}}(w),L(w))$ and the extrinsic differentiability of $F$ at $u$ with respect to 
$(\V,x)$ coincides with the intrinsic differentiability of $F$ at $u$. 
\end{Remark}

Extrinsic and intrinsic differentiability compensate each other in the following theorem.

\begin{teo}[Chain rule]\label{t:chain}
Let us consider two open sets $U \subset \W$, $\Omega \subset\H^n$ and two functions $f : \Omega \to \R^k$, $\phi: U \to \V$. Assume $\Phi(U) \subset \Omega$ where $\Phi$ is the graph function of $\phi$. Let us consider $x_{\W} \in U$ and set $x= \Phi(x_{\W})$. If $f$ and $\phi$ are h-differentiable at $x$ and intrinsic differentiable at $x_{\W}$, respectively,
then the composition $F=f\circ\Phi:U \to\R^k$, given by
$$
F(u)=f(u\phi(u))\quad \text{for all $u\in U$},
$$
is extrinsically differentiable at $x_{\mathbb{W}}$ with respect to
$(\V,x)$. For every $w \in \W$ the formula 
\beq\label{eq:diffchain}
d^{\V}_x F(w)= Df(x)(w d \phi_{x_{\W}}(w))
\eeq
holds. If in addition $f( w\phi(w))=c$ for every $w \in U$ and some $c\in\R$, then we obtain
\begin{equation}\label{eq:kernels}
\ker(Df(x))=\mathrm{graph}(d \phi_{x_{\W}}).
\end{equation}
\end{teo}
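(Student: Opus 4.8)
The plan is to unwind the definition of $F^{\V}_{x^{-1}}$ and reduce everything to the $h$-differentiability of $f$ at $x$. Since $x=\Phi(x_\W)=x_\W\phi(x_\W)$, the uniqueness of the factorization gives $\pi_\W(x)=x_\W$ and $\pi_\V(x)=\phi(x_\W)=:v$, so that $\sigma_x(w)=xwv^{-1}$. Writing $F^{\V}_{x^{-1}}(w)=F(\sigma_x(w))-F(x_\W)=f(\Phi(\sigma_x(w)))-f(x)$ and using \eqref{eq:translatedf} in the form $\phi(\sigma_x(w))=v\,\phi_{x^{-1}}(w)$, the graph point collapses to
\[
\Phi(\sigma_x(w))=\sigma_x(w)\,\phi(\sigma_x(w))=xwv^{-1}\,v\,\phi_{x^{-1}}(w)=x\,\big(w\,\phi_{x^{-1}}(w)\big).
\]
Hence $F^{\V}_{x^{-1}}(w)=f\big(x\,(w\,\phi_{x^{-1}}(w))\big)-f(x)$, which is exactly the increment of $f$ at $x$ along $z_w:=w\,\phi_{x^{-1}}(w)$. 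Note that for $w\in\sigma_{x^{-1}}(U)$ near $0$ we have $\sigma_x(w)\in U$, so $x z_w=\Phi(\sigma_x(w))\in\Phi(U)\subset\Omega$ and the Pansu increment is legitimate.

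Second, I would feed $z_w$ into the $h$-differentiability of $f$ and control the remainder using the intrinsic differentiability of $\phi$. Setting $L:=d\phi_{x_\W}$, condition \eqref{intdiff} reads $\|L(w)^{-1}\phi_{x^{-1}}(w)\|=o(\|w\|)$. Since $L$ is an $h$-homomorphism, $\|L(w)\|\le C\|w\|$, and then \eqref{eq:estimnormsub} shows that $\|z_w\|$ and $\|wL(w)\|$ are both bounded by $C'\|w\|$; in particular the Pansu remainder $o(\|z_w\|)$ is $o(\|w\|)$. Because $Df(x)$ is an $h$-homomorphism into the abelian group $\R^k$, it is additive and annihilates the centre $H_2$, whence $|Df(x)(p)|\le C\|p\|$ for every $p$. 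Applying this bound to $p=L(w)^{-1}\phi_{x^{-1}}(w)$ and using additivity yields $Df(x)(z_w)=Df(x)(wL(w))+o(\|w\|)$, so that
\[
F^{\V}_{x^{-1}}(w)=Df(x)\big(w\,d\phi_{x_\W}(w)\big)+o(\|w\|).
\]
It remains to verify that $w\mapsto Df(x)(wL(w))$ is an $h$-homomorphism $\W\to\R^k$: homogeneity holds because dilations are automorphisms and $L$ is homogeneous, while additivity is a direct consequence of the additivity of $Df(x)$ and of $L$ (equivalently, the commutator discrepancy between $w_1w_2L(w_1)L(w_2)$ and $w_1L(w_1)w_2L(w_2)$ lies in the centre $H_2$, which $Df(x)$ kills). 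This identifies the map as $d^{\V}_xF$ and proves \eqref{eq:diffchain}.

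Third, for the kernel identity I would specialise to $f\circ\Phi\equiv c$. Then $F^{\V}_{x^{-1}}\equiv 0$, so $d^{\V}_xF=0$ and \eqref{eq:diffchain} gives $Df(x)(w\,d\phi_{x_\W}(w))=0$ for every $w\in\W$; that is, $\mathrm{graph}(d\phi_{x_\W})\subseteq\ker Df(x)$. To promote this inclusion to equality I would argue by dimension. The kernel $\ker Df(x)$ is a linear subspace of $\R^{2n+1}$ containing $H_2$, hence a vertical homogeneous subgroup, of dimension $2n+1-k$ once we use that $Df(x)$ is onto $\R^k$ (as $f$ is a defining function) and $\dim\V=k$. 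On the other side, $\mathrm{graph}(d\phi_{x_\W})$ is the image of the proper injective immersion $w\mapsto w\,d\phi_{x_\W}(w)$ (proper by \eqref{eq:estimnormsub}, injective since $\pi_\W$ inverts it), hence a closed homogeneous subgroup of the same dimension $\dim\W=2n+1-k$. A closed $(2n+1-k)$-dimensional subgroup contained in the connected $(2n+1-k)$-dimensional subspace $\ker Df(x)$ must coincide with it, giving \eqref{eq:kernels}.

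The main obstacle, and the step I would treat most carefully, is the passage from the intrinsic error measured in $\V$, namely $\|L(w)^{-1}\phi_{x^{-1}}(w)\|=o(\|w\|)$, to a genuine $o(\|w\|)$ error for $F^{\V}_{x^{-1}}$ after composing with $Df(x)$. This is precisely the point where the intrinsic and extrinsic notions of differentiability are matched, and it is essential that $Df(x)$ both annihilates the centre $H_2$ and is Lipschitz for the homogeneous norm, so that the nonlinear commutator corrections generated by the group product $w\,\phi_{x^{-1}}(w)$ are absorbed under $Df(x)$.
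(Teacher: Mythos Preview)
Your argument is correct and follows essentially the same route as the paper: you rewrite $F^{\V}_{x^{-1}}(w)=f\big(x\,(w\,\phi_{x^{-1}}(w))\big)-f(x)$, split the error into the Pansu remainder of $f$ and the $Df(x)$-correction coming from $\|d\phi_{x_\W}(w)^{-1}\phi_{x^{-1}}(w)\|=o(\|w\|)$, and then use a dimension count for \eqref{eq:kernels}. One small caveat: in your kernel step you invoke surjectivity of $Df(x)$ by saying ``as $f$ is a defining function'', but this is not among the stated hypotheses; the paper's proof simply asserts that $\mathrm{graph}(d\phi_{x_\W})$ and $\ker Df(x)$ are ``homogeneous subgroups with the same dimension'', so both proofs tacitly rely on $Df(x)$ being onto (which holds in every application of the theorem in the paper).
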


\begin{proof}
Let us first show that $F$ is extrinsically differentiable at $x_{\W}$ with respect to  $(\V,x)$. We define 
\[
L(w)=Df(x)(w d\phi_{x_{\W}}(w))=Df(x)(w) + Df(x)(d \phi_{x_{\W}}(w))
\]
for $w \in \W$, that is an h-homomorphism. For $w$ small enough, we have
\begin{equation*}
\begin{split}
\frac{|F^\V_{x^{-1}}(w)-L(w)|}{\| w\|} &= \frac{|f(x w x_{\V}^{-1} \phi(x w x_{\V}^{-1}))-f(x)-L(w)|}{\|w\|}\\
&= \frac{|f(x w \phi_{x^{-1}}(w))-f(x)-Df(x)(w d \phi_{x_{\W}}(w))|}{\| w \|}\\
& \leq \frac{|f(x w \phi_{x^{-1}}(w))-f(x)-Df(x)(w \phi_{x^{-1}}(w))|}{\|w\|}\\
&+ \frac{|Df(x)(w \phi_{x^{-1}}(w))-Df(x)(w d \phi_{x_{\W}}(w))|}{\| w \|}.
\end{split}
\end{equation*}
Let us consider the last two addends separately:
\begin{equation*} 
\begin{split}
&\frac{|f(x  w \phi_{x^{-1}}(w))-f(x)-Df(x)(w \phi_{x^{-1}}(w))|}{\|w\|}\\
&=\frac{|f(x w \phi_{x^{-1}}(w))-f(x)-Df(x)(w \phi_{x^{-1}}(w))|}{\|w \phi_{x^{-1}}(w)\|}\,\frac{\|w \phi_{x^{-1}}(w)\|}{\| w \|} \to 0
\end{split}
\end{equation*}
as $ \| w \| \to 0$, by the Pansu differentiability of $f$ at $x$ and by the 
validity of
$$ \frac{ \|w \phi_{x^{-1}}(w)\|}{\| w \|} \leq 1 + \frac { \| \phi_{x^{-1}}(w) \|}{\| w \|} = 1 + \left\|  \ d \phi_{x_{\W}}\left( \frac{w}{\| w \|} \right) \right\|+ \frac { \|  d \phi_{x_{\W}}(w)^{-1}  \phi_{x^{-1}}(w) \|}{\| w \|} \leq C_x$$
for all $w\neq0$ and sufficiently small. It is indeed a consequence of the intrinsic differentiability of $\phi$ at $x_{\W}$. For the second addend, the previous intrinsic differentiability yields 
$$\frac{|Df(x)(d \phi_{x_{\W}}(w)^{-1} \phi_{x^{-1}}(w))|}{\|w\|} = \left| Df(x) \left( \frac{  d \phi_{x_{\W}}(w)^{-1} \phi_{x^{-1}}(w))  }{\| w \|} \right) \right| \to 0 $$
as $ w\to0$. This complete the proof of the first claim 
and also establishes formula \eqref{eq:diffchain}.

Let us now assume the constancy of $w\to f(w \phi(w))$ on $U$.
Since we have proved that $F$ is extrinsically differentiable at $x_{\mathbb{W}}$ with respect to $(\V,x)$, being in this case $F^\V_{x^{-1}}$ identically
vanishing, we obtain
\[
d^{\V}_xF(w)=o(\|w\|)
\]
as $w\to0$. Therefore, for any $u \in \W$, we have
\begin{equation*}
\| Df(x)(\delta_tu d \phi_{x_{\W}}(\delta_tu)) \|= o ( t ) 
\end{equation*}
as $t \to 0$. Due to the h-linearity, it follows that
\begin{equation*}
 Df(x)(u d \phi_{x_{\W}}(u))=0.
\end{equation*}
We have proved the inclusion $\mathrm{graph}(d \phi_{x_{\W}}) \subset \ker(Df(x))$ of homogeneous subgroups with the same dimension, hence
formula \eqref{eq:kernels} is established.
\end{proof}

The notion of $\mathbb{H}$-regular surface in $\mathbb{H}^n$ was first given in \cite{Areaformula}.

\begin{deff}
\label{Hregularsurface}
Let $\Sigma \subset \mathbb{H}^n$ be a set and let $1 \leq k \leq n$. We say that $\Sigma$ is an $\mathbb{H}$-\textit{regular surface of low codimension},
or a {\em $k$-codimensional $\H$-regular surface}, if for every $x \in \Sigma$ there exist an open set $\Omega$ containing $x$ and a function $f =(f_1,\ldots, f_k) \in C^1_{h}(\Omega, \mathbb{R}^k)$ such that
\begin{itemize}
\item [(i)]$\Sigma \cap \Omega = \{ y \in \Omega : f(y)=0 \}$,
\item [(ii)]$\nabla_H f_1(y) \wedge \dots \wedge \nabla_Hf_k (y)\neq 0$  for all $y \in \Omega.$ 
\end{itemize}
\end{deff}
We can characterize the metric tangent cone of an $\mathbb{H}$-regular surface of codimension $k$.
\begin{deff}\label{def:tan}
For $A\subset\H^n$ and $x\in A$, the {\em homogeneous tangent cone} is the set
\[
\Tan(A,x)=\set{\nu\in\H^n:\,\nu=\lim_{h\to\infty} \delta_{r_h}(x^{-1}x_h),\, r_h>0,\,x_h\in A,\,x_h\to x}.
\]
\end{deff}
From \cite[Proposition~3.29]{Areaformula}, we have the following characterization.
\begin{prop}\label{prop:Tan}
If $\Sigma$ is an $\H$-regular surface of low codimension and $f\in C^1_h(\Omega,\R^k)$
is as in Definition~\ref{Hregularsurface}, then 
\[
\ker Df(x)=\Tan(\Sigma,x)
\]
for all $x\in \Sigma\cap\Omega$.
\end{prop}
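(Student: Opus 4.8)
The plan is to prove the two inclusions $\Tan(\Sigma,x)\subset\ker Df(x)$ and $\ker Df(x)\subset\Tan(\Sigma,x)$ separately, deducing the second (harder) one from the chain rule of Theorem~\ref{t:chain}. Throughout, I fix $x\in\Sigma\cap\Omega$ and recall that, since $\nabla_Hf_1(x)\wedge\cdots\wedge\nabla_Hf_k(x)\neq0$, the h-differential $Df(x)\colon\H^n\to\R^k$ is a surjective h-homomorphism vanishing on $H_2$, so that $\ker Df(x)$ is a vertical homogeneous subgroup of dimension $2n+1-k$.

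First I would establish the easy inclusion $\Tan(\Sigma,x)\subset\ker Df(x)$. Let $\nu\in\Tan(\Sigma,x)$, written as $\nu=\lim_h\delta_{r_h}(x^{-1}x_h)$ with $x_h\in\Sigma$, $x_h\to x$ and $r_h>0$, and set $w_h=x^{-1}x_h\to0$. Since $f(x_h)=f(x)=0$, the Pansu differentiability of $f$ at $x$ yields
\begin{equation*}
0=f(xw_h)-f(x)=Df(x)(w_h)+o(\|w_h\|),
\end{equation*}
whence $|Df(x)(w_h)|=o(\|w_h\|)$. Using the homogeneity $Df(x)(\delta_{r_h}w_h)=r_hDf(x)(w_h)$ together with $\|\delta_{r_h}w_h\|=r_h\|w_h\|\to\|\nu\|$, I obtain $|Df(x)(\delta_{r_h}w_h)|\to0$; the continuity of $Df(x)$ then gives $Df(x)(\nu)=0$, that is $\nu\in\ker Df(x)$.

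For the reverse inclusion I would invoke the implicit function theorem of \cite{Areaformula}, so that near $x$ the surface $\Sigma$ is the intrinsic graph of an intrinsic differentiable map $\phi\colon U\subset\W\to\V$ with $x=x_\W\phi(x_\W)$, the intrinsic differentiability being guaranteed by \cite{Arena}. Since $f\circ\Phi\equiv0$ on $U$, the constancy case \eqref{eq:kernels} of Theorem~\ref{t:chain} gives $\ker Df(x)=\mathrm{graph}(d\phi_{x_\W})$, so it suffices to show $\mathrm{graph}(d\phi_{x_\W})\subset\Tan(\Sigma,x)$ by a blow-up. Using $l_{x^{-1}}(\Sigma)=\{w\,\phi_{x^{-1}}(w):w\in\sigma_{x^{-1}}(U)\}$ and fixing $w_0\in\W$, for $r$ large the point $w:=\delta_{1/r}w_0$ lies in $\sigma_{x^{-1}}(U)$, so $x_r:=x\,w\,\phi_{x^{-1}}(w)\in\Sigma$ and
\begin{equation*}
\delta_r(x^{-1}x_r)=\delta_r(w)\,\delta_r(\phi_{x^{-1}}(w))=w_0\cdot\delta_r\big(\phi_{x^{-1}}(\delta_{1/r}w_0)\big).
\end{equation*}
The estimate $d\big(d\phi_{x_\W}(w),\phi_{x^{-1}}(w)\big)=o(\|w\|)$ of Definition~\ref{d:intrinsicDiff}, combined with the h-homomorphism identity $\delta_r\,d\phi_{x_\W}(\delta_{1/r}w_0)=d\phi_{x_\W}(w_0)$ and the fact that $\delta_r$ scales $d$ by $r$, forces $\delta_r(\phi_{x^{-1}}(\delta_{1/r}w_0))\to d\phi_{x_\W}(w_0)$. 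Hence $\delta_r(x^{-1}x_r)\to w_0\,d\phi_{x_\W}(w_0)\in\Tan(\Sigma,x)$, and $w_0$ being arbitrary, $\mathrm{graph}(d\phi_{x_\W})\subset\Tan(\Sigma,x)$. Chaining $\ker Df(x)=\mathrm{graph}(d\phi_{x_\W})\subset\Tan(\Sigma,x)\subset\ker Df(x)$ closes the proof.

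I expect the blow-up in the last step to be the main obstacle: one must handle the anisotropic dilations on the vertical subgroup $\W$ with care and control the remainder of the intrinsic differentiability estimate, and one must check that the translated graph representation $l_{x^{-1}}(\Sigma)$ is valid on a full neighbourhood of $0$ so that the admissible points $\delta_{1/r}w_0$ are not lost. By contrast, the inclusion $\Tan(\Sigma,x)\subset\ker Df(x)$ and the algebraic identification via Theorem~\ref{t:chain} are comparatively routine.
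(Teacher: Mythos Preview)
Your argument is correct. Note, however, that the paper does not supply its own proof of Proposition~\ref{prop:Tan}: it simply quotes \cite[Proposition~3.29]{Areaformula}. The original proof in \cite{Areaformula} establishes the hard inclusion by a direct blow-up of the level set, exploiting the uniform estimates in the implicit function theorem to show that $\delta_{1/r}(x^{-1}\Sigma)$ converges locally in Hausdorff distance to $\ker Df(x)$; it does not pass through any intrinsic-differential chain rule, which was not available at the time.

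Your route is genuinely different and, within the present paper, rather natural: you feed the intrinsic differentiability of $\phi$ (Theorem~\ref{t:ArenaSerapioni}) into the chain rule (Theorem~\ref{t:chain}) to obtain the algebraic identification $\ker Df(x)=\mathrm{graph}(d\phi_{x_\W})$, and then reduce the geometric inclusion to the elementary blow-up $\delta_r\phi_{x^{-1}}(\delta_{1/r}w_0)\to d\phi_{x_\W}(w_0)$. This is cleaner once Theorem~\ref{t:chain} is in hand, and it nicely illustrates how that theorem encodes the tangent structure; the price is that you are invoking results (Theorems~\ref{IFT} and~\ref{t:ArenaSerapioni}) stated later in the text, though there is no circularity since both are external citations independent of Proposition~\ref{prop:Tan}. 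One small point you gloss over: to launch the implicit function theorem you must first choose a $k$-dimensional horizontal subgroup $\V$ with $J_\V f(x)>0$ and a complementary vertical $\W$. This is possible because $k\le n$ and $Df(x)|_{H_1}$ is surjective (a standard symplectic linear algebra fact, see e.g.\ \cite[Lemma~3.26]{Areaformula}), but it deserves a sentence.
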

Given an open subset $\Omega \subset \H^n$, a function $f \in C^1_h(\Omega, \R^k)$ and $x \in \Omega$,
we define the {\em horizontal Jacobian}
\begin{equation}\label{def:HJac}
J_Hf(x)=\|\nabla_H f_1(x) \wedge \dots \wedge \nabla_Hf_k (x)\|_g,
\end{equation}
where the norm is given through our fixed left invariant metric $g$.

If $f \in C^1_h(\Omega, \R)$ and $\V\subset H_1$ is a $k$-dimensional subspace, we set $\nabla_{\V}f(x)$ as the unique vector of $\V$ such that
$Df(x)(z)=\ban{\nabla_{\V}f(x),z}$ for every $z\in \V$. As a consequence, we can also define the {\em Jacobian with respect to $\V$}, namely
\begin{equation}\label{def:VJac}
J_{\V}f(x)=\|\nabla_{\V} f_1(x) \wedge \dots \wedge \nabla_{\V} f_k (x)\|_g.
\end{equation}
The next implicit function theorem is proved in \cite[Theorem 3.27]{Areaformula}.
Its general version in the framework of homogeneous groups is given in \cite[Theorem~1.3]{Magnani_2013}.

\begin{teo}[Implicit function theorem]\label{IFT}
Let $\Omega \subset \mathbb{H}^n$ be an open set, let $f \in C^1_h(\Omega, \mathbb{R}^k)$ be a function and consider a point $x_0 \in \Omega$ such that
$J_{\V}f(x_0) > 0$.
We define the level set
$$
\Sigma= \{ x \in \Omega :  f(x)=f(x_0) \}.
$$
Setting $\pi_\W(x_0)=\eta_0$ and $\pi_\V(x_0)=\up_0$,
there exist an open set $\Omega' \subset \Omega \subset \mathbb{H}^n$, with $x_0 \in \Omega'$, an open set $U \subset \mathbb{W}$ with $\eta_0 \in U$ and a unique continuous function
$ \phi: U \to \mathbb{V}$ such that $\phi(\eta_0)=\up_0$ and 
$$ \Sigma \cap \Omega'= \{w \phi(w) : w \in U \}.$$
\end{teo}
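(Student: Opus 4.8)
The plan is to solve the equation $f(wv)=f(x_0)$ for $v\in\V$ as a function of $w\in\W$, exploiting the fact that the defining function is genuinely $C^1$ along the horizontal directions of $\V$, even though along the vertical directions of $\W$ it is only continuous. After normalizing $f(x_0)=0$, the hypothesis $J_\V f(x_0)>0$ means exactly that the restricted differential $L_0:=Df(x_0)|_\V:\V\to\R^k$ is a linear isomorphism, since $J_\V f(x_0)=\|\nabla_\V f_1(x_0)\wedge\dots\wedge\nabla_\V f_k(x_0)\|_g$ is nonzero precisely when the vectors $\nabla_\V f_j(x_0)$ are linearly independent.

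Since $\H^n=\W\rtimes\V$, the multiplication map $m(w,v)=wv$ is a polynomial diffeomorphism of $\W\times\V$ onto $\H^n$, so $g(w,v):=f(wv)$ is continuous near $(\eta_0,\up_0)$. The key regularity observation is that $g$ is $C^1$ in the variable $v$: because $\V$ is horizontal and isotropic, one has $v+tv'=v\cdot(tv')$ for $v,v'\in\V$, whence $w(v+tv')=(wv)(tv')$ and therefore
\[
\frac{d}{dt}\Big|_{t=0} g(w,v+tv')=X_{v'}f(wv)=Df(wv)(v').
\]
Thus the partial differential $\partial_v g(w,v)=Df(wv)|_\V$ exists in every direction $v'\in\V$ and, since $f\in C^1_h$ forces $x\mapsto Df(x)$ to be continuous, it depends continuously on $(w,v)$; hence $g(w,\cdot)$ is $C^1$ with $\partial_v g(\eta_0,\up_0)=L_0$ invertible. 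I emphasize that no analogous statement holds in the $w$-variable, so the classical implicit function theorem cannot be invoked directly: this is the main obstacle, and working around it is the crux of the proof.

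To produce $\phi$ I would run a parametric contraction. For $w$ near $\eta_0$ set $T_w(v)=v-L_0^{-1}g(w,v)$, so that $T_w(v)=v$ is equivalent to $g(w,v)=0$. Since $\partial_v T_w(\eta_0,\up_0)=I-L_0^{-1}L_0=0$, the continuity of $\partial_v g$ yields a radius $r>0$ and a neighborhood $U$ of $\eta_0$ in $\W$ on which $\|\partial_v T_w\|\le 1/2$; shrinking $U$ so that $|L_0^{-1}g(w,\up_0)|<r/2$ (possible since $g(\eta_0,\up_0)=0$ and $g$ is continuous), the map $T_w$ carries the closed ball $\overline{B}(\up_0,r)\subset\V$ into itself and is a $\tfrac12$-contraction. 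The Banach fixed point theorem then gives, for each $w\in U$, a unique $\phi(w)\in B(\up_0,r)$ with $f(w\phi(w))=0$, and $\phi(\eta_0)=\up_0$.

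Finally, continuity and uniqueness of $\phi$ follow from the uniform contraction constant. Writing $\phi(w)=T_w(\phi(w))$ and estimating
\[
|\phi(w)-\phi(w')|\le \tfrac12\,|\phi(w)-\phi(w')|+|L_0^{-1}|\,|g(w,\phi(w'))-g(w',\phi(w'))|,
\]
one gets $|\phi(w)-\phi(w')|\le 2|L_0^{-1}|\,|g(w,\phi(w'))-g(w',\phi(w'))|\to 0$ as $w'\to w$, by continuity of $g$; note that only the continuity of $f$ in $w$ is used here. Choosing $\Omega'=m(U\times B(\up_0,r))\cap\Omega$, the local uniqueness of the fixed point shows that a point $wv\in\Omega'$ lies in $\Sigma$ if and only if $v=\phi(w)$, which yields $\Sigma\cap\Omega'=\{w\phi(w):w\in U\}$ and the uniqueness of the continuous parametrization.
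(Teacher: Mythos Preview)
The paper does not actually prove Theorem~\ref{IFT}: it is quoted from \cite[Theorem~3.27]{Areaformula} (with a reference also to the more general \cite[Theorem~1.3]{Magnani_2013}), so there is no in-paper argument to compare against. Your proposal is a correct self-contained proof, and it is essentially the standard one in this setting: exploit that $\V$ is an isotropic horizontal subgroup so that the group product restricted to $\V$ is linear, deduce that $g(w,v)=f(wv)$ is $C^1$ in $v$ with $\partial_v g(w,v)=Df(wv)|_\V$ continuous in $(w,v)$, and then run a parametric Banach contraction to produce the implicit map $\phi$ and its continuity. Your observation that one cannot invoke the classical $C^1$ implicit function theorem because $g$ is only continuous in $w$ is exactly the point of the intrinsic version.

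Two minor remarks that do not affect correctness. First, in the continuity estimate it is cleaner to fix $w_0$ and let $w\to w_0$, so that $\phi(w_0)$ is fixed and $|g(w,\phi(w_0))-g(w_0,\phi(w_0))|\to 0$ follows directly from continuity of $g$; as written, with $w'\to w$, you implicitly use uniform continuity of $g$ on the compact set $\overline U_0\times \overline B(\up_0,r)$, which is fine but worth stating. Second, the choice $\Omega'=m(U\times B(\up_0,r))\cap\Omega$ is indeed open because $m$ is a diffeomorphism, and the local uniqueness clause is exactly the uniqueness of the fixed point of each $T_w$ on $\overline B(\up_0,r)$.
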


\begin{deff}[Parametrized $\H$-regular surface]\label{def:param}
Let $\Sigma$ be an $\H$-regular surface. We assume that 
there exist a semidirect factorization $\H^n=\W\rtimes\V$,
an open set $U\subset\W$ and a uniformly intrinsic differentiable $\phi:U\to \V$ such that $$\Sigma=\set{u\phi(u)\in\H^n: u\in U}.$$
We say that $\Sigma$ is a {\em parametrized $\H$-regular surface with
respect to $(\W,\V)$}, where $\phi$ is the {\em parametrization of $\Sigma$}.
If $\Omega\subset\H^n$ is open, $\Sigma\subset\Omega$, and we have $f\in C^1_h(\Omega,\R^k)$, $x_0\in\Sigma$, 
such that $f^{-1}(f(x_0))\cap U\V=\Sigma$ and $Df(x):\H^n\to\R^k$
is surjective for every $x\in\Sigma$, then we say that $f$ is a 
{\em defining function of $\Sigma$}.
\end{deff}

\begin{prop}\label{prop:parametrJV}
Let $\Omega\subset\H^n$ be open and let $f\in C^1_h(\Omega,\R^k)$ be such that $f^{-1}(f(x_0))=\Sigma$ for some $x_0\in\Omega$. 
If $J_\V f(x)>0$ for all $x\in\Sigma$, then $\Sigma$ is a parametrized $\H$-regular surface 
with respect to $(\W,\V)$ and $f$ is a defining function.
\end{prop}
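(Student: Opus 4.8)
The plan is to verify the two properties that together constitute the conclusion: first, that $\Sigma$ is a parametrized $\H$-regular surface with respect to $(\W,\V)$ in the sense of Definition~\ref{def:param}, i.e.\ an intrinsic graph of a uniformly intrinsic differentiable map $\phi\colon U\to\V$; and second, that $f$ satisfies the requirements making it a defining function of $\Sigma$. I would begin with the purely algebraic observations. Since $\V\subset H_1$ is a $k$-dimensional horizontal subgroup and $J_\V f(x)=\|\nabla_\V f_1(x)\wedge\cdots\wedge\nabla_\V f_k(x)\|_g>0$ for every $x\in\Sigma$, the projected gradients $\nabla_\V f_i(x)$ are linearly independent; a fortiori the full horizontal gradients $\nabla_H f_i(x)$ are linearly independent, so that $\nabla_H f_1(x)\wedge\cdots\wedge\nabla_H f_k(x)\neq0$ and, equivalently, $Df(x)\colon\H^n\to\R^k$ is surjective (it is already onto when restricted to $\V$). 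Because $f\in C^1_h(\Omega,\R^k)$, the maps $x\mapsto\nabla_H f_i(x)$ are continuous, so this wedge stays nonzero on an open neighbourhood of $\Sigma$; intersecting with $\Omega$ and replacing $f$ by $f-f(x_0)$ shows that $\Sigma$ meets condition (ii) of Definition~\ref{Hregularsurface}, hence is an $\H$-regular surface, and simultaneously yields the surjectivity clause of the defining-function requirement.

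Next I would produce the intrinsic graph. The hypothesis $J_\V f(x)>0$ is exactly what Theorem~\ref{IFT} needs: applied at each point it represents $\Sigma$ locally as $\{w\phi(w):w\in U\}$ for an open $U\subset\W$ and a continuous $\phi\colon U\to\V$, and the uniqueness clause of that theorem pins down $\phi$ on overlaps. To obtain the single global map required by Definition~\ref{def:param} one must check that the projection $\pi_\W$ is injective on $\Sigma$ and that $U:=\pi_\W(\Sigma)$ is open; then $\phi:=\pi_\V\circ(\pi_\W|_\Sigma)^{-1}$ is well defined, continuous, and satisfies $\Sigma=\{u\phi(u):u\in U\}$. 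Openness of $U$ and local injectivity are immediate from the local charts of Theorem~\ref{IFT}; the genuine point is that no two points of the whole level set lie on a common coset $w\V$, i.e.\ that the level set is a single sheet over $\W$. This is where the standing hypothesis $f^{-1}(f(x_0))=\Sigma$ together with $J_\V f>0$ on all of $\Sigma$ must be exploited, and I expect this global-injectivity step to be the main obstacle, since the implicit function theorem alone is local.

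Finally, once $\Sigma=\{u\phi(u):u\in U\}$ is established, the uniform intrinsic differentiability of $\phi$ on $U$ is not proved from scratch: it is the content of the differentiability theorem for parametrizations of $\H$-regular surfaces, \cite{Arena} (Theorem~\ref{t:ArenaSerapioni}), which applies precisely because $\Sigma$ is $\H$-regular and $\phi$ is its intrinsic parametrization. This furnishes the map $\phi$ of Definition~\ref{uniformintdiff} and completes the proof that $\Sigma$ is a parametrized $\H$-regular surface with respect to $(\W,\V)$. It then remains only to collect the defining-function conditions of Definition~\ref{def:param}: $\Sigma\subset\Omega$ and $x_0\in\Sigma$ are given, $f\in C^1_h(\Omega,\R^k)$ is assumed, surjectivity of $Df(x)$ was shown in the first step, and $f^{-1}(f(x_0))\cap U\V=\Sigma$ is immediate because $\Sigma=\{u\phi(u):u\in U\}\subset U\V$ while $\Sigma=f^{-1}(f(x_0))$ by hypothesis. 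Hence $f$ is a defining function, as claimed.
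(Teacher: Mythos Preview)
Your approach matches the paper's: apply the implicit function theorem at every point of $\Sigma$, glue the local parametrizations via the uniqueness clause, and then invoke \cite{Arena} (Theorem~\ref{t:ArenaSerapioni}) for uniform intrinsic differentiability. The paper is in fact terser than you are---it does not separately verify surjectivity of $Df(x)$ or spell out the defining-function clauses---and on the point you single out as the main obstacle (global injectivity of $\pi_\W|_\Sigma$), the paper simply asserts that ``by the uniqueness of the implicit mapping we can conclude that $\Sigma$ actually is entirely parametrized by a unique graph mapping,'' without further argument. So your proposal follows the same route and is at least as complete as the paper's own proof; your hesitation about the global step is warranted, but the paper does not resolve it any more fully than you do.
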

\begin{proof}
We may apply the implicit function theorem of \cite[Proposition 3.13]{Areaformula}
at any point $x\in\Sigma$. Then locally $\Sigma$ is an intrinsic graph,
and by the uniqueness of the implicit mapping we can conclude that
$\Sigma$ actually is entirely parametrized by a unique graph mapping.
The uniform intrinsic differentiability of this parametrization 
follows from \cite[Theorem~4.2]{Arena}.
As a result, $\Sigma$ is a parametrized $\H$-regular surface
with respect to $(\W,\V)$ and $f$ is its defining function.
\end{proof}

A simple application of Theorem~\ref{t:chain}
is the following proposition.

\begin{prop}
Let $U\subset\W$ be open and assume that $\phi:U\to \V$ is
everywhere intrinsic differentiable. Let $\Sigma=\set{n\phi(n):n\in U}$ 
and let $\Omega\subset\H^n$ be open such that $\Sigma\subset\Omega$.
If $f:\Omega\to\R^k$ is everywhere h-differentiable with 
\[
\Sigma=f^{-1}(f(x_0))\cap(U\V)
\]
for some $x_0\in U\V$ and $J_Hf(x)>0$ for all $x\in\Sigma$, then $J_\V f(x)>0$
for all $x\in\Sigma$.
\end{prop}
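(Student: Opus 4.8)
The plan is to apply the chain rule of Theorem~\ref{t:chain} at each point of $\Sigma$ and then read off the conclusion from the transversality of an intrinsic graph to $\V$. First I would record that the hypothesis $\Sigma=f^{-1}(f(x_0))\cap(U\V)$ together with $\Sigma=\{n\phi(n):n\in U\}$ forces $f$ to be constant, equal to $f(x_0)$, along the intrinsic graph of $\phi$; that is, $f(w\phi(w))=f(x_0)$ for every $w\in U$.

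Next, fix $x\in\Sigma$ and write $x=\Phi(x_\W)$ with $x_\W=\pi_\W(x)\in U$, which is legitimate since $\pi_\W(w\phi(w))=w$ under $\H^n=\W\rtimes\V$. Because $f$ is h-differentiable at $x$, $\phi$ is intrinsic differentiable at $x_\W$, and $f\circ\Phi$ is constant on $U$, Theorem~\ref{t:chain} yields $\ker(Df(x))=\mathrm{graph}(d\phi_{x_\W})$. Here I would stress the role of the standing assumption $J_Hf(x)>0$: it guarantees that $Df(x):\H^n\to\R^k$ is surjective, so that $\dim\ker(Df(x))=2n+1-k=\dim\W$, which is exactly the dimension match that makes the equality of homogeneous subgroups \eqref{eq:kernels} valid rather than a mere inclusion.

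Then I would exploit the fact that any intrinsic graph of an h-homomorphism $\W\to\V$ is transversal to $\V$: if $w\,d\phi_{x_\W}(w)\in\V$, applying $\pi_\W$ and using $\H^n=\W\rtimes\V$ gives $w=\pi_\W(w\,d\phi_{x_\W}(w))=0$, whence $d\phi_{x_\W}(0)=0$ and the element is $0$. Hence $\mathrm{graph}(d\phi_{x_\W})\cap\V=\{0\}$, and therefore $\ker(Df(x))\cap\V=\{0\}$.

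Finally I would convert this into the nonvanishing of $J_\V f$. The relation $\ker(Df(x))\cap\V=\{0\}$ says precisely that the restriction $Df(x)|_\V:\V\to\R^k$ is injective, hence an isomorphism since $\dim\V=k$. Recalling that for $v\in\V$ one has $Df_i(x)(v)=\langle\nabla_\V f_i(x),v\rangle$, the matrix of $Df(x)|_\V$ in an orthonormal basis of $\V$ has rows given by the coordinates of the $\nabla_\V f_i(x)$, so its invertibility is equivalent to the linear independence of $\nabla_\V f_1(x),\dots,\nabla_\V f_k(x)$, i.e.\ to $J_\V f(x)=\|\nabla_\V f_1(x)\wedge\cdots\wedge\nabla_\V f_k(x)\|_g>0$. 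As $x\in\Sigma$ was arbitrary, this gives $J_\V f>0$ on $\Sigma$. I expect the only delicate point to be the bookkeeping that legitimizes \eqref{eq:kernels}, namely using $J_Hf>0$ to pin down the dimension of the kernel; the graph transversality and the closing linear-algebra equivalence are routine.
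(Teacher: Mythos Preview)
Your proof is correct and close in spirit to the paper's, but the linear-algebra step is organised differently. Both arguments invoke Theorem~\ref{t:chain} at a point $x=\Phi(x_\W)$ and use $J_Hf(x)>0$ to ensure $Df(x)$ is surjective. You then appeal to the kernel identity \eqref{eq:kernels}, observe that an intrinsic graph over $\W$ meets $\V$ only at the origin, and conclude that $Df(x)|_\V$ is injective. The paper instead uses only the differential formula \eqref{eq:diffchain}: from $D_\W f(x)(v)=-D_\V f(x)(d\phi_{x_\W}(v))$ it argues by contradiction that if $D_\V f(x)$ were not an isomorphism, the image of $D_\W f(x)$---and hence of $Df(x)$---would lie in a proper subspace of $\R^k$, contradicting surjectivity. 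Your route is a bit cleaner conceptually (kernel transversality), while the paper's avoids needing the full equality \eqref{eq:kernels} and only uses the inclusion $\mathrm{graph}(d\phi_{x_\W})\subset\ker(Df(x))$; your careful remark that $J_Hf(x)>0$ is exactly what legitimises the dimension count behind \eqref{eq:kernels} is well placed.
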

\begin{proof}
We consider $x=w\phi(w)$, so by Theorem~\ref{t:chain} the function 
$F=f\circ \Phi$ is extrinsically differentiable at $w$ with respect to
$(\V,x)$ and 
\[
0=d^{\V}_x F(v)= Df(x)(v d \phi_{x_{\W}}(v))=D_\W f(x)(v)+D_\V f(x)(d \phi_{x_{\W}}(v))
\]
where $v\in\W$ and $D_Sf(x)=Df(x)|_S$ for any homogeneous subgroup $S$ of $\H^n$.
If by contradiction $D_\V f(x):\V\to\V$ would not be a isomorphism, then its image
$T$ would have linear dimension less than $k$. Then the previous equalities would imply
that the image of $D_\W f(x)$ would be contained in $T$, hence the same would hold
for the image of $Df(x)$. This conflicts with the fact that $Df(x)$ is surjective.
\end{proof}

The following corollary is a straightforward consequence of 
the previous proposition.

\begin{cor}
If $\H^n=\W\rtimes\V$ is a semidirect product and 
$f$ is a defining function of a parametrized $\H$-regular surface $\Sigma$
with respect to $(\W,\V)$, then $J_\V f(x)>0$ for every $x\in \Sigma$.
\end{cor}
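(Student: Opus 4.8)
The plan is to show that the hypotheses of the preceding Proposition are all satisfied in the present setting, and then to invoke it directly. First I would unpack the two definitions at play. Since $\Sigma$ is a parametrized $\H$-regular surface with respect to $(\W,\V)$, Definition~\ref{def:param} furnishes an open set $U\subset\W$ and a uniformly intrinsic differentiable map $\phi:U\to\V$ with $\Sigma=\set{n\phi(n):n\in U}$; in particular $\phi$ is everywhere intrinsic differentiable, which is the first hypothesis required by the Proposition. Since $f$ is a defining function of $\Sigma$, the same definition provides an open set $\Omega\supset\Sigma$, a function $f\in C^1_h(\Omega,\R^k)$ that is therefore everywhere h-differentiable, a point $x_0\in\Sigma\subset U\V$ with $f^{-1}(f(x_0))\cap(U\V)=\Sigma$, and the surjectivity of $Df(x):\H^n\to\R^k$ for every $x\in\Sigma$.

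The only hypothesis of the Proposition not handed to us verbatim is $J_Hf(x)>0$ on $\Sigma$, and I would deduce it from the surjectivity of $Df(x)$. The key observation is that the h-homomorphism $Df(x)$ annihilates the center $H_2$: for $v\in H_2$ the homogeneity of $Df(x)$ gives $t^2\,Df(x)(v)=Df(x)(\delta_t v)=t\,Df(x)(v)$ for all $t>0$, which forces $Df(x)(v)=0$. Hence $Df(x)$ is completely determined by its restriction to $H_1$, namely $z\mapsto(\ban{\nabla_Hf_1(x),z},\dots,\ban{\nabla_Hf_k(x),z})$, and it has the same image as this restriction. That restriction is surjective onto $\R^k$ exactly when the horizontal gradients $\nabla_Hf_1(x),\dots,\nabla_Hf_k(x)$ are linearly independent, that is when $\nabla_Hf_1(x)\wedge\dots\wedge\nabla_Hf_k(x)\neq0$, which by \eqref{def:HJac} is precisely $J_Hf(x)>0$. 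Thus surjectivity of $Df(x)$ and the positivity of the horizontal Jacobian coincide.

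With every hypothesis verified, applying the Proposition to the pair $(\phi,f)$ yields $J_\V f(x)>0$ for all $x\in\Sigma$, which is the assertion. I do not anticipate any genuine obstacle: the corollary is essentially a matter of matching the ingredients supplied by Definition~\ref{def:param} against the requirements of the Proposition. The one point that deserves a line of care is exactly the equivalence established above, confirming that the surjectivity built into the notion of defining function is the same condition as the $J_Hf>0$ hypothesis of the Proposition.
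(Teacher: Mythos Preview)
Your proof is correct and follows exactly the route the paper intends: the corollary is stated there as a ``straightforward consequence of the previous proposition'' with no further argument, and you have simply filled in the verification that Definition~\ref{def:param} supplies all the hypotheses of that proposition, including the observation that surjectivity of the h-homomorphism $Df(x)$ forces $J_Hf(x)>0$.
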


We conclude this section by pointing out that the intrinsic graph in the 
implicit function theorem is suitably differentiable.

\begin{teo}[{\cite[Theorem 4.2]{Arena}}]
\label{t:ArenaSerapioni}
In the assumption of Theorem~\ref{IFT}, $\phi$ is uniformly intrinsic differentiable on $U$.
\end{teo}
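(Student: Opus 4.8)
The plan is to produce, at each point $\bar w\in U$, an explicit candidate for the intrinsic differential and then verify the uniform estimate of Definition~\ref{uniformintdiff} by transferring the (uniform) Pansu differentiability of the defining function $f$ through the implicit equation. Write $x=\Phi(\bar w)=\bar w\phi(\bar w)$. Since $J_\V f(x)>0$, the restriction $D_\V f(x):\V\to\R^k$ is an isomorphism and $Df(x)$ is surjective; hence the h-homomorphism
\[
L_{\bar w}(w)=-(D_\V f(x))^{-1}\big(D_\W f(x)(w)\big),\qquad w\in\W,
\]
is the unique h-homomorphism whose graph is $\ker Df(x)=\Tan(\Sigma,x)$ (Proposition~\ref{prop:Tan}). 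This $L_{\bar w}$ is the natural candidate for $d\phi_{\bar w}$, and by continuity of $Df$ together with $J_\V f>0$, the assignment $\bar w\mapsto L_{\bar w}$ is continuous.

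Next I would exploit that $\phi_{\Phi(w')^{-1}}$ parametrizes the translated surface $\Phi(w')^{-1}\Sigma$ near the origin, by the translation formula for intrinsic graphs recalled before Definition~\ref{d:intrinsicDiff}. Setting $x'=\Phi(w')$ and $y=w\,\phi_{(x')^{-1}}(w)$, the defining condition reads $f(x'y)=f(x_0)=f(x')$. The map $y\mapsto f(x'y)$ has Pansu differential $Df(x')$ at the origin, so the level-set identity gives $Df(x')(y)=o(\|y\|)$. Expanding $Df(x')(y)=D_\W f(x')(w)+D_\V f(x')(\phi_{(x')^{-1}}(w))$ and inverting $D_\V f(x')$ yields
\[
\phi_{(x')^{-1}}(w)=L_{w'}(w)+o(\|w\|),
\]
once the error is controlled by the intrinsic Lipschitz bound $\|\phi_{(x')^{-1}}(w)\|\le C\|w\|$, so that $\|y\|\le(1+C)\|w\|$ and $o(\|y\|)=o(\|w\|)$. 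Finally, since $L_{w'}\to L_{\bar w}$ as $w'\to\bar w$ and the homogeneous distance on the horizontal subgroup $\V$ is comparable to the Euclidean one,
\[
d\big(L_{\bar w}(w),\phi_{(x')^{-1}}(w)\big)\le \|L_{\bar w}(w)-L_{w'}(w)\|+C\,o(\|w\|)\le \big(\eta(w')+o(1)\big)\|w\|,
\]
with $\eta(w')\to0$; dividing by $\|w\|$ and taking the double supremum gives the required limit.

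The hard part will be making every error term uniform over the base points $w'$ in a neighborhood of $\bar w$, rather than merely pointwise. This rests on three uniform ingredients on a fixed compact piece of $\Sigma$: (i) the uniform Pansu differentiability of $f$, i.e.\ $|f(x'y)-f(x')-Df(x')(y)|\le \omega(\|y\|)\|y\|$ with a modulus $\omega$ independent of $x'$, which is exactly where the hypothesis $f\in C^1_h$ (continuity of $Df$), and not just pointwise differentiability, is essential; (ii) a uniform intrinsic Lipschitz estimate for $\phi$, which follows from the quantitative implicit function theorem and the uniform cone property forced by the continuity of $\nabla_Hf_1\wedge\cdots\wedge\nabla_Hf_k\neq0$; and (iii) the uniform invertibility of $D_\V f(x')$ together with the continuity of $x'\mapsto (D_\V f(x'))^{-1}D_\W f(x')$, guaranteed by $J_\V f>0$ and $f\in C^1_h$. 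Once these three uniformities are in place, the estimate above is automatically uniform, and one concludes that $\phi$ is uniformly intrinsic differentiable with $d\phi_{\bar w}=L_{\bar w}$.
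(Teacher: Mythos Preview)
The paper does not prove this statement: it is quoted verbatim as \cite[Theorem~4.2]{Arena} and used as a black box (see its invocations in Proposition~\ref{prop:parametrJV} and in the proof of Theorem~\ref{Maintheorem}). So there is no ``paper's own proof'' to compare against.

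Your sketch is nonetheless the right one, and it is essentially the argument of the original reference. Defining $L_{\bar w}=-(D_\V f(x))^{-1}D_\W f(x)$ so that $\mathrm{graph}(L_{\bar w})=\ker Df(x)$, using the identity $f(x'y)=f(x')$ for $y=w\,\phi_{(x')^{-1}}(w)$, and the additivity of the h-homomorphism $Df(x')$ on the product $y=w\cdot v$, one indeed obtains $D_\V f(x')\big(\phi_{(x')^{-1}}(w)-L_{w'}(w)\big)=o(\|y\|)$, whence the claimed expansion after inverting $D_\V f(x')$. The passage from $o(\|y\|)$ to $o(\|w\|)$ and the uniformity in $w'$ are exactly where your three ingredients enter, and you have identified them correctly.

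One caution on ingredient~(ii): the uniform intrinsic Lipschitz bound must come \emph{a priori}, from the quantitative implicit function theorem in \cite{Areaformula} (or \cite{Magnani_2013}), not from any differentiability of $\phi$, to avoid circularity. You acknowledge this, but in a full write-up you would need to extract an explicit statement giving $\|\phi_{(x')^{-1}}(w)\|\le C\|w\|$ with $C$ locally uniform in $x'$; this is where most of the work actually lies.
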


\subsection{Intrinsic derivatives}
In this section we recall some results about uniform intrinsic differentiability in Heisenberg groups. Throughout this section, we assume that $\H^n$
is a semidirect product $\W\rtimes \V$ with $\W$ orthogonal to $\V$.
The following proposition ensures that we can always find
a Heisenberg basis which is adapted to this factorization.

\begin{prop}
\label{Heisbasis}
We assume that $\V$ is spanned by an orthonormal basis $(v_1, \dots, v_k)$. 
Then $k\le n$ and there exist an orthonormal basis 
$(v_{k+1}, \dots, v_n, w_1, \dots, w_n, e_{2n+1})$ of $\W$ 
such that $(v_1, \dots, v_n, w_1, \dots, w_n, e_{2n+1})$ 
is a Heisenberg basis of $\H^n$.
\end{prop}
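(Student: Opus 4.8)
The plan is to read all the metric/symplectic data through the compatible complex structure $J$ and to exploit that horizontal subgroups are exactly the isotropic subspaces of $(H_1,\omega)$. First I would record the identities I intend to use. From $\langle p,q\rangle=\omega(p,Jq)$, $J^2=-I$, and the fact that $J$ is an isometry, one deduces $\omega(p,q)=\langle Jp,q\rangle$, that $J$ preserves $\omega$ (so that $J$ is skew-adjoint, $\langle Jp,q\rangle=-\langle p,Jq\rangle$), and that $\langle Jx,x\rangle=\omega(Jx,Jx)=0$ for every $x\in H_1$. Since $\V$ is a horizontal subgroup, closure under the group law forces the $H_2$-component of any product $vv'$ to vanish, i.e. $\omega|_\V\equiv 0$; thus $\V$ is isotropic, equivalently $\langle Jv,v'\rangle=0$ for all $v,v'\in\V$, so $J\V\perp\V$. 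Finally, because $\W$ is orthogonal to $\V$ and $\H^n=\W\V$ with $\W\cap\V=\{0\}$ yields the vector space decomposition $\H^n=\W\oplus\V$ (here $H_2\subset\W$ absorbs the bracket term), with $\dim\W+\dim\V=2n+1$ we obtain $\W=\V^\perp$.

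Next I would establish $k\le n$. Using that $J$ is an isometry together with the isotropy of $\V$, the $2k$ vectors $v_1,\dots,v_k,Jv_1,\dots,Jv_k$ are orthonormal: indeed $\langle Jv_i,Jv_j\rangle=\langle v_i,v_j\rangle=\delta_{ij}$ and $\langle v_i,Jv_j\rangle=0$. Hence $2k\le\dim H_1=2n$, giving $k\le n$.

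To build the basis I would extend $(v_1,\dots,v_k)$ one vector at a time, preserving the property that $v_1,\dots,v_m,Jv_1,\dots,Jv_m$ are orthonormal. Given such a system with $m<n$, the span $E_m=\mathrm{span}(v_1,\dots,v_m,Jv_1,\dots,Jv_m)$ is $J$-invariant of dimension $2m$, and since $J$ is a skew isometry its orthogonal complement $E_m^\perp$ is $J$-invariant as well. As $\dim E_m^\perp=2(n-m)>0$, I pick a unit vector $v_{m+1}\in E_m^\perp$; then $Jv_{m+1}\in E_m^\perp$ is again a unit vector and $\langle v_{m+1},Jv_{m+1}\rangle=0$, so the enlarged system stays orthonormal. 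Iterating up to $m=n$ produces orthonormal $v_1,\dots,v_n$ with $\langle v_i,Jv_j\rangle=0$ for all $i,j$. Setting $w_j=Jv_j$, the system $(v_1,\dots,v_n,w_1,\dots,w_n)$ is orthonormal and symplectic, since $\omega(v_i,v_j)=\langle Jv_i,v_j\rangle=0$, $\omega(v_i,w_j)=\langle Jv_i,Jv_j\rangle=\delta_{ij}$, and $\omega(w_i,w_j)=\omega(Jv_i,Jv_j)=\omega(v_i,v_j)=0$; adjoining the fixed $e_{2n+1}\in H_2$ yields a Heisenberg basis.

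It then remains to check that $v_{k+1},\dots,v_n,w_1,\dots,w_n,e_{2n+1}$ form an orthonormal basis of $\W$. Each of these vectors is orthogonal to $\V=\mathrm{span}(v_1,\dots,v_k)$: the $v_i$ with $i>k$ by orthonormality, the $w_j=Jv_j$ because $\langle Jv_j,v_i\rangle=0$ for $i\le k$, and $e_{2n+1}$ because $H_2\perp H_1$. Hence they all lie in $\V^\perp=\W$; they are orthonormal and number $(n-k)+n+1=2n+1-k=\dim\W$, so they constitute the required basis. The only genuine work is the inductive extension of the third step, which is exactly the assertion that an orthonormal isotropic family extends to a Hermitian-orthonormal basis of the complex space $(H_1,J)$; I expect this to be the main point, though it becomes routine once the identity $\omega(\cdot,\cdot)=\langle J\cdot,\cdot\rangle$ and the $J$-invariance of orthogonal complements are in place. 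Equivalently, one could run complex Gram--Schmidt directly for the Hermitian form associated with $J$ on $H_1$.
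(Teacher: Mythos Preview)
Your proof is correct and follows essentially the same route as the paper: both arguments use that $\V$ is isotropic (so $J\V\perp\V$), set $w_i=Jv_i$, and extend the orthonormal system inductively by choosing unit vectors in the $J$-invariant orthogonal complement of the span built so far. Your write-up is more explicit in deriving $\W=\V^\perp$, in verifying the symplectic relations, and in checking that the new basis vectors actually lie in $\W$, but the underlying idea is identical to the paper's.
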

\begin{proof}
Since $\V$ is commutative, an element $v=J(w)$ with $v,w\in\V$ satisfies
\[
|v|^2=\ban{v,J(w)}=-\omega(v,w)=0,
\]
therefore $\V \cap J(\V)= \set{0}$.
We set $w_i=J(v_i) \in \W$ for $i=1, \dots, k$ and define the
$2k$-dimensional subspace
\[
\S_{1}=\V\oplus J(\V)\subset H_1.
\]
We notice that
$
\dim(\S_{1}^{\bot}\cap H_{1})=2(n-k).
$
If $k<n$, we pick a vector $v_{k+1}\in\S_{1}^{\bot}\cap H_{1}$ 
of unit norm and define $w_{k+1}=Jv_{k+1}$.
It is easily observed that both $w_{k+1}$ and $v_{k+1}$ 
are orthogonal to $\S_1$, so that 
$(v_1,\ldots,v_{k+1},w_1,\ldots,w_{k+1},e_{2n+1})$ is a Heisenberg 
basis of 
\[
\S_2\oplus \spn\set{e_{2n+1}},
\]
where we have defined $\S_{2}=\V\oplus\spn\set{v_{k+1}}\oplus J(\V\oplus\spn\set{v_{k+1}}$). Indeed, the previous subspace has
the structure of a $(2k+3)$-dimensional Heisenberg group.
One can iterate this process until a Heisenberg basis of
$\H^n$ is found.
\end{proof}

From now on, we assume that $(v_1, \dots, v_n, w_1, \dots, w_n, e_{2n+1})$ is the Heisenberg basis provided by Proposition~\ref{Heisbasis}.
We can identify $\V$ with $\mathbb{R}^k$ and $\W$ with $\mathbb{R}^{2n+1-k}$ through the following diffeomorphisms
$$i_{\V}:  \V \to \mathbb{R}^k,  \ i_{\V}\left(\sum_{i=1}^k x_i v_i\right)=(x_1, \dots, x_k),$$
$$ i_{\W}: \W \to \mathbb{R}^{2n+1-k} , $$
$$ i_{\W}\left(z e_{2n+1} + \sum_{i=k+1}^{n} (x_i v_i+y_iw_i)+ \sum_{i=1}^k \eta_i v_i \right)= ( x_{k+1}, \dots, x_n, \eta_1, \dots, \eta_k, y_{k+1}, \dots, y_n, z).$$

We identify any function from an open subset $U \subset \W$, $\phi: U  \to \V$ with the corresponding function from an open subset $\widetilde{U} \subset \R^{2n+1-k}$, $\widetilde{\phi}: \widetilde{U}  \to \mathbb{R}^k$:
$$\widetilde{\phi}(w)=i_{\V}(\phi(i_{\W}^{-1}(w))) \ \ \ \forall w \in \widetilde{U}=i_{\W}(U) \subset \mathbb{R}^{2n+1-k}.$$
Any h-homomorphism $L: \W \to \V$ can be identified with 
the linear map $\widetilde{L}: \R^{2n+1-k} \to \R^k$ with respect
to the fixed basis. 
So it can be identified with a $k \times (2n-k)$ 
matrix $M_L$ with real coefficients such that
$$
L(w)=M_L \pi(w)^T,
$$
for every $w \in \R^{2n+1-k}$, where $\pi: \R^{2n+1-k} \to \R^{2n-k}$ is the canonical projection on the first $2n-k$ components.

If $U \subset \W$ is an open set and $\phi: U \to \V$ is intrinsic differentiable at a point $w \in U$, we denote by $D^{\phi} \phi(w)$ the matrix associated to $d \phi_{w}$ and we call it \emph{intrinsic Jacobian matrix} of $\phi$ at $w$.
If $\mathcal{U} \subset \R^{2n+1-k}$ is an open set and $\psi=(\psi_1, \dots, \psi_k): \mathcal{U} \to \R^k$ is a function, we define the family of $2n-k$ vector fields:
$$ W^{\psi}_j= \begin{cases}
(i_{\W})_*(X_{j+k}) \ \ \ \ \ j= 1, \dots, n-k\\
\nabla^{\psi_{j-n+k}}=\partial_{\eta_{j-n+k}}+ \psi_{j-n+k} \partial_{z} \ \ \ \ \ j= n-k+1, \dots, n \ \\\
(i_{\W})_*(Y_{j+k})\ \ \ \ \ j= n+1, \dots, 2n-k.\\
\end{cases}
$$
\begin{deff}[Intrinsic derivatives]
Let $U \subset \W$ be an open set and let $\bar{w}$ be a point of $U$. Let $\phi: U \to \V $ be a continuous function. For each
$j= 1, \ldots, 2n-k$, we say that $\phi$ has {\em $\partial^{\phi_j}$-derivative at $\bar{w}$} if and only if there exists 
$(\alpha_{1,j}, \dots, \alpha_{k,j})\in\mathbb{R}^k$ such that for all integral curves $\gamma^j: (-\delta, \delta) \to U$ of $W^{\phi}_j$ with $\gamma^j(0)=\bar{w}$ the following limit
$$  \lim_{t \to 0} \frac{\phi (\gamma^j(t))-\phi(\bar{w})}{t}$$ 
exists and it is equal to 
$(\alpha_{1,j},\ldots,\alpha_{k,j}).$
For all $j= 1, \dots, 2n-k$ we denote it by 
\[
\partial^{\phi_j} \phi (\bar{w})  = 
\begin{pmatrix}
\partial^{\phi_j} \phi_1(\bar{w})\\
\vdots\\
\partial^{\phi_j} \phi_k(\bar{w})
\end{pmatrix}=
\begin{pmatrix}
\alpha_{1,j}\\
\vdots\\
\alpha_{k,j}.
\end{pmatrix}.
\]
\end{deff}
The existence of continuous intrinsic derivatives 
actually characterizes the uniform intrinsic differentiability, 
\cite[Theorem 5.7]{Intsurfaces}.

\begin{deff}
\label{intjacobian}
Let $U \subset \W$ be an open set. Let $\phi: U \to \V$ be an intrinsic differentiable function at $\bar{w} \in U$.
We define the \emph{intrinsic Jacobian of $\phi$ at $\bar{w}$} as
$$ J^{\phi} \phi (\bar{w})=  \sqrt{ 1 + \sum_{\ell=1}^k\sum_{I \in \mathcal{I}_{\ell}}  (M^{\phi}_I(\bar{w}))^2 },$$
where we have defined $\mathcal{I}_{\ell}$ as the set of multiindexes
$$ \{ (i_1, \dots, i_{\ell},j_1, \dots, j_{\ell})) \in \mathbb{N}^{2l} : 1 \leq i_1 < i_2 < \dots < i_{\ell} \leq 2n-k, \ 1 \leq  j_1 < j_2 \dots < j_{\ell} \leq k \}. $$
We have also introduced the minors
\begin{equation*}
 M^{\phi}_I(\bar{w}) =  \mathrm{det} \begin{pmatrix} 
\partial^{\phi_{i_1}} \phi_{j_1}(\bar{w}) & \dots & \partial^{\phi_{i_{\ell}}} \phi_{j_1}(\bar{w}) \\
\dots & \dots & \dots \\
\partial^{\phi_{i_1}} \phi_{j_{\ell}}(\bar{w}) & \dots & \partial^{\phi_{i_{\ell}}} \phi_{j_{\ell}}(\bar{w}) \\
\end{pmatrix}.
\end{equation*}

\end{deff}

\subsection{Measures and area formulas}

If $\mathbb{H}^n$ is endowed with a homogeneous distance $d$, we denote by $\mathbb{B}(x,r)=  \{ y \in \mathbb{H}^n : d(x,y) \leq r \}$ and for $S \subset \mathbb{H}^n$, 
$$\mathrm{diam}(S)= \sup \{ d(x,y) : x,y \in S \}.$$ 
Notice that $ \mathrm{diam}(\mathbb{B}(x,r))=2r$ for all $x \in \mathbb{H}^n$ and $r >0$.
\begin{deff}[Carath\'eodory's construction]\label{d:caratheodory}
Let $\mathcal{F} \subset \mathcal{P}(\mathbb{H}^n) $ be a non-empty family of closed subsets of $\mathbb{H}^n$, equipped with a homogeneous distance $d$. Let be $\alpha>0$. If $\delta>0$, and $A \subset \mathbb{H}^n$, we define
\begin{equation}
\label{caratheodory}
\phi_{\delta}^{\alpha}(A)= \inf \left\{ \sum_{j=0}^{\infty} c_{\alpha} \ \mathrm{diam}(B_j)^{\alpha}: A \subset \bigcup_{j=0}^\infty B_j , \ \mathrm{diam}(B_j) \leq \delta, \ B_j \in \mathcal{F} \right\},
\end{equation}
If $\mathcal{F}$ coincides with the family $\mathcal{F}_b$ of closed balls
with respect to the distance $d$ and we choose $c_{\alpha}=2^{-\alpha}$
in (\ref{caratheodory}), then 
\begin{equation}\label{eq:Salpha}
\mathcal{S}^{\alpha}(A)= \sup_{\delta>0} \phi_{\delta}^{\alpha}(A)
\end{equation}
is the \emph{$\alpha$-spherical measure} of $A\subset\H^n$. 
\end{deff}
In the case $\mathcal{F}$ is the family of all
closed sets and $ k \in \{1, \dots, 2n+1 \}$, we define 
\[
c_k=\frac{\cL^k(\set{x\in\R^k: |x|\le 1})}{2^k}
\]
where $\mathcal{L}^k$ denotes the Lebesgue measure. Then the corresponding $k$-dimensional Hausdorff measure is given by
$$ \mathcal{H}^k_E (A)= \sup_{\delta>0} \phi_{\delta}^{k}(A)$$
where $\H^n$ is equipped with the Euclidean distance induced through the identification with
$\mathbb{R}^{2n+1}$.  These measures are Borel regular on subsets of $\mathbb{H}^n$.
For our purposes, it is useful to recall a less known Hausdorff-type measure, first introduced in \cite{ray}. Given $\alpha \in [0, \infty)$, $\delta \in (0, \infty)$, we define the $\alpha$-{\em dimensional centered Hausdorff measure} $\mathcal{C}^{\alpha}$ of a set $A \subset \mathbb{H}^n$ as 
$$ \mathcal{C}^{\alpha}(A)= \sup_{E \subset A} \mathcal{D}^{\alpha}(E)$$ where
$\mathcal{D}^{\alpha}(E)= \lim_{\delta \to 0+} \mathcal{C}^{\alpha}_{ \delta}(E)$, and, in turn, $\mathcal{C}^{\alpha}_{\delta}(E)=0$ if $E = \emptyset$ and if $E \neq \emptyset$
$$
C^{\alpha}_{ \delta}(E)= \inf \  \left\{ \  \sum_{i=0}^\infty  r_i^{\alpha} : E \subset \bigcup_{i=0}^\infty \mathbb{B}(x_i, r_i), \ x_i \in E, \ \text{diam}(\mathbb{B}(x_i, r_i)) \leq \delta  \right\}.
$$
\begin{deff}
Let $\alpha>0$, $x \in \mathbb{H}^n$ and $\mu$ be a Borel regular measure on $\mathbb{H}^n$. We define the \emph{upper $\alpha$-density of $\mu$ at $x $} as 
\begin{equation}\label{eq:ThetaUpperDensity}
\Theta^{*\alpha}(\mu,x)= \limsup_{r \to 0}\frac{\mu(\mathbb{B}(x,r))}{r^{\alpha}}. 
\end{equation}
\end{deff}

The previous definition and terminology follow 
\cite[2.10.19]{Federer}.

\begin{teo}[{\cite[Theorem 3.1]{FSSC8}}]
\label{abstractdiffcent}
Let $\alpha>0$ and let $\mu$ be a Borel regular measure on $\mathbb{H}^n$ such that there exists a countable open covering of $\mathbb{H}^n$, whose elements have $\mu$ finite measure. Let $B \subset A \subset \mathbb{H}^n$ be Borel sets. If $\mathcal{C}^{\alpha}(A) < \infty$ and $\mu \res A$ is absolutely continuous with respect to $\mathcal{C}^{\alpha} \res A$, then we have that $\Theta^{*\alpha}(\mu, \cdot)$ is a Borel function on $A$ and
$$ \mu(B)= \int_B \Theta^{*\alpha}(\mu,x) \ d  \mathcal{C}^{\alpha}(x).$$
\end{teo}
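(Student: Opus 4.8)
The plan is to establish this as a metric differentiation theorem in the spirit of Federer's density theory, the decisive feature being the \emph{centered} construction of $\mathcal{C}^{\alpha}$, which forces the optimal constant $1$. First I would localize. Writing $\mathbb{H}^{n}=\bigcup_{j}\Omega_{j}$ with $\Omega_{j}$ open and $\mu(\Omega_{j})<\infty$, and noting that $\Theta^{*\alpha}(\mu,x)$ depends only on $\mu$ restricted to an arbitrarily small neighborhood of $x$, it suffices to prove the identity for Borel sets $B$ contained in a single $\Omega_{j}$; hence I may assume $\mu$ finite. Then $\mu$ is a Radon measure on the $\sigma$-compact space $\mathbb{H}^{n}$, so it is inner and outer regular, which I will use freely. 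The Borel measurability of $\Theta^{*\alpha}(\mu,\cdot)$ follows from the upper semicontinuity of $x\mapsto\mu(\mathbb{B}(x,r))$ (itself a consequence of left invariance and Radonness) together with the standard reduction of the $\limsup$ as $r\to0$ to a countable set of radii. The heart of the matter is then a pair of one-sided density comparisons.

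For a Borel set $E\subset A$ and $t>0$ these read as follows. Estimate \textbf{(I)}: if $\Theta^{*\alpha}(\mu,x)\le t$ for every $x\in E$, then $\mu(E)\le t\,\mathcal{C}^{\alpha}(E)$. Estimate \textbf{(II)}: if $\Theta^{*\alpha}(\mu,x)\ge t$ for every $x\in E$, then $\mu(E)\ge t\,\mathcal{C}^{\alpha}(E)$. Estimate (I) needs no covering theorem. Fixing $\lambda>t$ and putting $E_{\delta}=\{x\in E:\mu(\mathbb{B}(x,r))<\lambda r^{\alpha}\text{ for all }r<\delta\}$, every cover of $E_{\delta}$ by balls $\mathbb{B}(x_{i},r_{i})$ with $x_{i}\in E_{\delta}$ and $r_{i}<\delta$ satisfies $\sum_{i}r_{i}^{\alpha}>\lambda^{-1}\sum_{i}\mu(\mathbb{B}(x_{i},r_{i}))\ge\lambda^{-1}\mu(E_{\delta})$, so $\mathcal{C}^{\alpha}_{\delta}(E_{\delta})\ge\lambda^{-1}\mu(E_{\delta})$ and hence $\mu(E_{\delta})\le\lambda\,\mathcal{D}^{\alpha}(E_{\delta})\le\lambda\,\mathcal{C}^{\alpha}(E)$; letting $\delta\to0$ (so $E_{\delta}\uparrow E$) and then $\lambda\to t^{+}$ yields (I). Estimate (II) is the main obstacle. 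One begins with the fine cover of $E$ by the balls $\mathbb{B}(x,r)$, $x\in E$, for which $\mu(\mathbb{B}(x,r))>(t-\epsilon)r^{\alpha}$, and must extract from it a countable cover of $E$ whose $\alpha$-sum is controlled by $\mu(E)$. The obstruction is that the Besicovitch covering property fails for homogeneous distances on $\mathbb{H}^{n}$, so a crude $5r$-covering argument loses a factor $5^{\alpha}$; recovering the sharp constant is exactly what the centered construction is designed for. Combining a Vitali-type covering theorem valid in this setting with the defining formula $\mathcal{C}^{\alpha}(E)=\sup_{F\subset E}\mathcal{D}^{\alpha}(F)$ produces essentially optimal covers of $E$ by balls centered in $E$ with $\sum_{i}r_{i}^{\alpha}\le(t-\epsilon)^{-1}\mu(E)+o(1)$, which gives $t\,\mathcal{C}^{\alpha}(E)\le\mu(E)$ as $\epsilon\to0$.

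Finally I would assemble the formula. Applying (I) with $t\to0^{+}$ on $\{\Theta^{*\alpha}(\mu,\cdot)=0\}$ together with $\mathcal{C}^{\alpha}(A)<\infty$ shows $\mu$ vanishes there, and applying (II) with $t\to\infty$ shows $\mathcal{C}^{\alpha}(\{\Theta^{*\alpha}(\mu,\cdot)=\infty\})=0$; it is precisely here that the hypothesis $\mu\res A\ll\mathcal{C}^{\alpha}\res A$ is invoked, to conclude that $\mu$ also vanishes on $\{\Theta^{*\alpha}=\infty\}$. On the complementary set I decompose $A$ into the dyadic layers $E_{k}=\{x:(1+\eta)^{k}\le\Theta^{*\alpha}(\mu,x)<(1+\eta)^{k+1}\}$, $k\in\mathbb{Z}$, for fixed $\eta>0$. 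For any Borel $B\subset A$, estimate (I) on $E_{k}\cap B$ gives $\mu(E_{k}\cap B)\le(1+\eta)^{k+1}\mathcal{C}^{\alpha}(E_{k}\cap B)\le(1+\eta)\int_{E_{k}\cap B}\Theta^{*\alpha}\,d\mathcal{C}^{\alpha}$, while estimate (II) gives the matching lower bound $\mu(E_{k}\cap B)\ge(1+\eta)^{k}\mathcal{C}^{\alpha}(E_{k}\cap B)\ge(1+\eta)^{-1}\int_{E_{k}\cap B}\Theta^{*\alpha}\,d\mathcal{C}^{\alpha}$. Summing over $k$ and adding the null contributions sandwiches $\mu(B)$ between $(1+\eta)^{-1}$ and $(1+\eta)$ times $\int_{B}\Theta^{*\alpha}(\mu,x)\,d\mathcal{C}^{\alpha}(x)$, so letting $\eta\to0^{+}$ gives the asserted identity; the measurability established in the first step completes the proof.
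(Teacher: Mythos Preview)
The paper does not prove this theorem: it is simply quoted from \cite[Theorem~3.1]{FSSC8} and used as a black box. There is therefore no paper-side argument to compare against; your proposal has to be judged on its own.

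Your architecture is the standard one and matches the cited reference: localize to finite $\mu$, establish Borel measurability of $\Theta^{*\alpha}(\mu,\cdot)$, prove the two one-sided density comparisons (I) and (II), and assemble via a dyadic layer-cake. The localization, the measurability argument, estimate (I), and the final assembly are all correct as written. In particular your handling of the null sets $\{\Theta^{*\alpha}=0\}$ and $\{\Theta^{*\alpha}=\infty\}$, and the use of $\mu\res A\ll\mathcal{C}^{\alpha}\res A$ precisely at the second of these, is right.

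The gap is in estimate (II). You correctly flag it as the crux and correctly diagnose why it is delicate: Besicovitch fails for homogeneous distances on $\mathbb{H}^{n}$, so the Vitali covering theorem for arbitrary Radon measures is unavailable, and the bare $5r$-lemma gives only $\mu(E)\ge 5^{-\alpha}t\,\mathcal{C}^{\alpha}(E)$. But your remedy---``combining a Vitali-type covering theorem valid in this setting with the defining formula $\mathcal{C}^{\alpha}(E)=\sup_{F\subset E}\mathcal{D}^{\alpha}(F)$ produces essentially optimal covers''---is an assertion, not an argument. You do not say which covering theorem, nor how the sup-over-subsets definition interacts with it to kill the enlargement factor. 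This is exactly where the work lies in \cite{FSSC8} (and in Edgar's treatment of centered densities in general metric spaces): one needs a genuine construction of covers of $F$ by balls \emph{centered in $F$} with $\sum_i r_i^{\alpha}\le s^{-1}\mu(V)$ for every $s<t$ and every open $V\supset F$, and this requires more than a single application of a covering lemma. Until that step is supplied, (II) is unproved and the formula does not follow.
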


We introduce now a crucial definition of density.
\begin{deff}
\label{def:sphefederer}
Let $\mathcal{F}_b$ be the family of closed balls with positive radius in $\H^n$ endowed with a homogeneous distance $d$. Let $\alpha>0$, $x \in \mathbb{H}^n$ and $\mu$ be a Borel regular measure on $\mathbb{H}^n$. We call \emph{spherical $\alpha$-Federer density of $\mu$ at $x $} the real number 
$$
\theta^{\alpha}(\mu,x)= \inf_{\epsilon>0} \sup \left\{ \frac{2^\alpha\mu(\mathbb{B})}{\diam(\B)^{\alpha}} : x \in \mathbb{B} \in \mathcal{F}_b, \ r < \epsilon \right\}.
$$
\end{deff}

This density naturally appears in representing a Borel regular measure that is absolutely continuous with respect to the $\alpha$-dimensional spherical measure.

\begin{teo}[{\cite[Theorem~7.2]{Magnani2019}}]
\label{abstractdiff}
Let $\alpha>0$ and let $\mu$ be a Borel regular measure on $\mathbb{H}^n$ such that there exists a countable open covering of $\mathbb{H}^n$ whose elements have $\mu$ finite measure. If $B \subset A \subset \mathbb{H}^n$ are Borel sets, then $\theta^{\alpha}(\mu, \cdot)$ is a Borel function on $A$. If in addition $\mathcal{S}^{\alpha}(A) < \infty$ and $\mu \res A$ is absolutely continuous with respect to $\mathcal{S}^{\alpha} \res A$, then 
$$ \mu(B)= \int_B \theta^{\alpha}(\mu,x) \ d  \mathcal{S}^{\alpha}(x).$$
\end{teo}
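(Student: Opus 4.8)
The plan is to prove the two assertions separately: first the Borel measurability of $x\mapsto\theta^\alpha(\mu,x)$, which needs only the $\sigma$-finiteness coming from the countable open cover of finite $\mu$-measure, and then the integral identity, which I would obtain by a density-comparison scheme in the spirit of the centered-measure statement of Theorem~\ref{abstractdiffcent}. For measurability, note that as $\epsilon\downarrow0$ the family of closed balls of radius smaller than $\epsilon$ containing a fixed $x$ shrinks, so the supremum in Definition~\ref{def:sphefederer} is nonincreasing in $\epsilon$ and $\theta^\alpha(\mu,x)=\lim_{m\to\infty}g_{1/m}(x)$, where $g_\epsilon(x)=\sup\{\mu(\mathbb{B})/r^\alpha:\ x\in\mathbb{B}(c,r),\ r<\epsilon\}$. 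It then suffices to prove that each $g_\epsilon$ is Borel, which I would do by reducing the uncountable family of competing balls to a countable one: since $\H^n$ is separable and $r\mapsto\mu(\mathbb{B}(c,r))$ is monotone, hence continuous off a countable set, the centers and radii may be taken in a fixed countable dense set without changing the supremum. Consequently $\{g_\epsilon>t\}$ is a countable union of closed balls, hence Borel, and $\theta^\alpha(\mu,\cdot)$ is Borel as a pointwise limit.

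For the identity I would reduce to the two one-sided density estimates, valid for every Borel $E\subset A$ and every $\lambda>0$: (A) if $\theta^\alpha(\mu,x)\le\lambda$ for all $x\in E$, then $\mu(E)\le\lambda\,\mathcal{S}^\alpha(E)$; and (B) if $\theta^\alpha(\mu,x)\ge\lambda$ for all $x\in E$, then $\mu(E)\ge\lambda\,\mathcal{S}^\alpha(E)$. Granting (A) and (B), one discards the $\mathcal{S}^\alpha$-null set $\{\theta^\alpha=+\infty\}\cap A$ (null by (B) together with the local finiteness of $\mu$) and the $\mu$-null set $\{\theta^\alpha=0\}\cap A$ (null by (A) together with $\mathcal{S}^\alpha(A)<\infty$), fixes $q>1$, and layers the remainder of $A$ into the Borel sets $A_j=\{q^j\le\theta^\alpha<q^{j+1}\}$, $j\in\Z$. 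On each $A_j\cap B$ both $\mu(A_j\cap B)$ and $\int_{A_j\cap B}\theta^\alpha\,d\mathcal{S}^\alpha$ lie in the interval $[q^j,q^{j+1}]\,\mathcal{S}^\alpha(A_j\cap B)$ by (A)--(B), so they differ by a factor at most $q$; summing over $j$ and letting $q\to1^+$ yields the asserted identity $\mu(B)=\int_B\theta^\alpha(\mu,x)\,d\mathcal{S}^\alpha(x)$.

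The easy inequality is (A), and here the decisive feature is that $\theta^\alpha$ is defined through balls merely \emph{containing} $x$, not centred at $x$. Fix $\eta>0$ and set $E_m=\{x\in E:\ \mu(\mathbb{B}(c,r))\le(\lambda+\eta)r^\alpha\ \text{whenever}\ x\in\mathbb{B}(c,r)\ \text{and}\ r<1/m\}$; the hypothesis $\theta^\alpha\le\lambda$ on $E$ gives $E_m\uparrow E$. For any cover of $E_m$ by closed balls of radius $<1/m$ that is nearly optimal for $\mathcal{S}^\alpha$, every ball meeting $E_m$ automatically satisfies $\mu(\mathbb{B}_j)\le(\lambda+\eta)r_j^\alpha$ at a point of $E_m$ it contains, so that $\mu(E_m)\le\sum_j\mu(\mathbb{B}_j)\le(\lambda+\eta)\sum_j r_j^\alpha$ and therefore $\mu(E_m)\le(\lambda+\eta)\mathcal{S}^\alpha(E_m)\le(\lambda+\eta)\mathcal{S}^\alpha(E)$. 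Letting $m\to\infty$ and then $\eta\to0$ proves (A), with no covering theorem needed.

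The hard inequality is (B), and I expect it to be the main obstacle. Here $\theta^\alpha(\mu,x)\ge\lambda$ furnishes, for each $x\in E$, closed balls containing $x$ of arbitrarily small radius with $\mu(\mathbb{B})\ge(\lambda-\eta)r^\alpha$; localising through the countable open cover we may assume $\mu(E)<\infty$ and choose an open $U\supset E$ with $\mu(U)\le\mu(E)+\eta$, and we may require all these balls to lie in $U$. They form a fine cover of $E$, and to conclude $\mathcal{S}^\alpha(E)\le\mu(E)/(\lambda-\eta)$ one must extract from it a subfamily that covers $\mathcal{S}^\alpha$-almost all of $E$ with controlled overlap, so that $\sum_j r_j^\alpha\le(\lambda-\eta)^{-1}\sum_j\mu(\mathbb{B}_j)\le(\lambda-\eta)^{-1}\mu(U)$. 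This is precisely the crux: Besicovitch's covering theorem fails for a general homogeneous distance on $\H^n$, so the $5r$-lemma would only deliver the non-sharp constant $5^\alpha$, ruining the factor $1$. The resolution is to invoke Federer's covering theory for measures produced by Carath\'eodory's construction, \cite[2.10.17, 2.10.18]{Federer}: because $\mathcal{S}^\alpha$ and the density $\theta^\alpha$ are built from the same family $\mathcal{F}_b$ of closed balls and $\mathcal{S}^\alpha(A)<\infty$, this machinery yields an almost-disjoint efficient cover with the sharp constant $1$, establishing (B) and completing the proof. I expect this covering step to be the principal difficulty, precisely because the sharp constant cannot be obtained from elementary Vitali-type arguments in the absence of Besicovitch's theorem.
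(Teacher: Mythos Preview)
The paper does not prove this theorem; it is quoted verbatim from \cite[Theorem~7.2]{Magnani2019} and used as a black box (just as Theorem~\ref{abstractdiffcent} is quoted from \cite{FSSC8}). There is therefore no ``paper's own proof'' to compare against.

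That said, your sketch is essentially the standard route to results of this type and matches the strategy in the cited reference: Borel measurability via a countable reduction of the admissible balls, the layer-cake decomposition $A_j=\{q^j\le\theta^\alpha<q^{j+1}\}$, and the two one-sided density comparisons. You have also correctly located the only genuinely nontrivial point, namely inequality~(B), and correctly diagnosed why it is delicate here: Besicovitch's covering theorem can fail for homogeneous distances on $\H^n$, so one cannot simply repeat the Euclidean proof, and the sharp constant comes instead from Federer's differentiation theory for Carath\'eodory constructions (\cite[2.10.17--18]{Federer}), exploiting that $\mathcal{S}^\alpha$ and $\theta^\alpha$ are built from the \emph{same} family $\mathcal{F}_b$. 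Inequality~(A) is, as you say, elementary precisely because the Federer density allows balls merely containing $x$.

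Two small technical remarks. First, in your measurability argument the reduction of centers to a countable dense set requires a short justification beyond monotonicity in $r$: given $\mathbb{B}(c,r)\ni x$ with $\mu(\mathbb{B}(c,r))>tr^\alpha$, pick a rational $r'\in(r,\epsilon)$ with $\mu(\mathbb{B}(c,r))>t(r')^\alpha$ and then $c'$ in the dense set with $d(c,c')<r'-r$, so that $\mathbb{B}(c,r)\subset\mathbb{B}(c',r')$ and hence $\mu(\mathbb{B}(c',r'))>t(r')^\alpha$; this is the inclusion trick, not continuity of $c\mapsto\mu(\mathbb{B}(c,r))$. Second, in~(A) the sets $E_m$ need not be Borel, so to pass from $\mu(E_m)\le(\lambda+\eta)\mathcal{S}^\alpha(E)$ to the same inequality for $E$ you should either invoke the continuity-from-below of Borel regular outer measures on increasing sequences, or simply bound $\mu(E)$ directly by noting that any $\delta$-cover of $E$ for $\mathcal{S}^\alpha$ already controls $\mu$ once $\delta$ is small, since every $x\in E$ lies in some $E_m$.
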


\begin{deff}[Spherical factor]
\label{def:sphfactor}
Let $d$ be a homogeneous distance in $\H^n$. If $\Pi\subset\H^n$ is a linear subspace of topological dimension $p$, then the \emph{spherical factor} of $\Pi$ with respect to $d$ is 
$$ \beta_d (\Pi)= \max_{z \in \mathbb{B}(0,1)} \mathcal{H}^{p}_{E} (\Pi \cap \mathbb{B}(z,1)).$$
\end{deff}
When we deal with a homogeneous distance $d$ that preserves some symmetries, then the spherical factor can become a geometric constant. 
The following definition detects those homogeneous distances giving a constant spherical factor. It extends \cite[Definition~6.1]{Magnani2017} to higher codimension.

\begin{deff}
\label{def:vertsymm}
We refer to the fixed graded scalar product $\langle \cdot, \cdot \rangle$ on $\H^n$ and we assume that there exists a family $\mathcal{F} \subset O(H_1)$ of isometries such that for any couple of $(p-1)$-dimensional subspaces $S_1, S_2 \subset H_1$, there exists $L \in \mathcal{F}$ that satisfies the condition $$L(S_1)=S_2.$$
Let $d$ be a homogeneous distance on $\mathbb{H}^n$
and let $p=1,\ldots,2n$. 
We say that $d$ is {\em $p$-vertically symmetric} 
if $p=1$ or $p\ge2$ and the following conditions hold.

Taking into account that $H_1$ and $H_2$ are orthogonal, we introduce the class of isometries 
$$\mathcal{O} = \{ T \in O(\mathbb{H}^n) :  T|_{H_2}=\text{Id}|_{H_2},  \  T|_{H_1} \in \mathcal{F} \}.$$
We also assume the following:
\begin{itemize}
\item $\pi_{H_1}(\mathbb{B}(0,1)) = \mathbb{B}(0,1) \cap H_1 = \{h \in H_1 : \theta(|\pi_{H_1}(h)|)\le  r_0\}$ for some monotone non-decreasing function $\theta : [0,+ \infty) \to [0,+ \infty)$ and $r_0 > 0$,
\item $ T(\mathbb{B}(0,1)) = \mathbb{B}(0,1)$ for all $T \in \mathcal{O}$. 
\end{itemize}
\end{deff}

More information on $p$-vertically symmetric distances can be found in \cite{Mag21prRotSym}. For instance, the sub-Riemannian distance in the Heisenberg group is vertically symmetric. Vertically symmetric distances were already introduced in \cite{Magnani2017}.

The next theorem specializes \cite[Theorem~1.1]{Magnani2020} to Heisenberg groups.

\begin{teo}\label{fattorevertical}
If $p=1,\ldots,2n+1$ and $d$ is a $p$-vertically symmetric distance on $\mathbb{H}^n$, then the spherical factor
$\beta_d(\mathbb{W})$ is constant on every $p$-dimensional vertical subgroup $\mathbb{W}\subset \mathbb{H}^n$. 
\end{teo}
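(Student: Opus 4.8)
The plan is to fix two $p$-dimensional vertical subgroups $\mathbb{W}_1,\mathbb{W}_2$, produce an isometry $T\in\mathcal{O}$ carrying one onto the other, and then rewrite the functional $z\mapsto\mathcal{H}^p_E(\mathbb{W}\cap\mathbb{B}(z,1))$ so that it transforms correctly under $T$ using \emph{only} the invariance $T(\mathbb{B}(0,1))=\mathbb{B}(0,1)$, and not the full isometric invariance of $d$. Since every $p$-dimensional vertical subgroup has the form $\mathbb{W}=S\oplus H_2$ with $S=\mathbb{W}\cap H_1$ a $(p-1)$-dimensional subspace of $H_1$, the $p$-vertical symmetry furnishes $L\in\mathcal{F}$ with $L(S_1)=S_2$; setting $T|_{H_1}=L$ and $T|_{H_2}=\mathrm{Id}$ gives $T\in\mathcal{O}$, whence $T(\mathbb{W}_1)=\mathbb{W}_2$, $T(S_1^{\perp}\cap H_1)=S_2^{\perp}\cap H_1$, $T\in O(\mathbb{H}^n)$ (so $T$ preserves $\mathcal{H}^p_E$), and $T(\mathbb{B}(0,1))=\mathbb{B}(0,1)$ by hypothesis.

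The decisive reductions concern the center $z$. First, for $w\in\mathbb{W}$ the left translation $l_w$ preserves $\mathbb{W}$ and restricts, in orthonormal coordinates on $\mathbb{W}\cong S\times\mathbb{R}$, to an affine shear of unit Jacobian, hence preserves $\mathcal{H}^p_E$ on $\mathbb{W}$; together with $l_w(\mathbb{B}(z,1))=\mathbb{B}(wz,1)$ this shows that $z\mapsto\mathcal{H}^p_E(\mathbb{W}\cap\mathbb{B}(z,1))$ is constant along left $\mathbb{W}$-orbits. Choosing $w\in\mathbb{W}$ suitably, I can move any center $z$ to $v:=\pi_{S^{\perp}\cap H_1}(\pi_{H_1}(z))\in S^{\perp}\cap H_1$. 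Second, the map $w\mapsto v^{-1}w$ is an affine shear of unit Jacobian followed by the Euclidean translation by $-v$, carrying $\mathbb{W}$ onto the Euclidean translate $\mathbb{W}-v$; it therefore preserves $\mathcal{H}^p_E$ and gives $\mathcal{H}^p_E(\mathbb{W}\cap\mathbb{B}(v,1))=\mathcal{H}^p_E((\mathbb{W}-v)\cap\mathbb{B}(0,1))$. Finally, by the radiality assumption $\pi_{H_1}(\mathbb{B}(0,1))=\{h\in H_1:\theta(|\pi_{H_1}(h)|)\le r_0\}$ is a Euclidean ball of some radius $\rho_0$, so as $z$ ranges over $\mathbb{B}(0,1)$ the admissible centers $v$ fill exactly the ball $\{v\in S^{\perp}\cap H_1:|v|\le\rho_0\}$. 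Combining these, $\beta_d(\mathbb{W})=\max\{\mathcal{H}^p_E((\mathbb{W}-v)\cap\mathbb{B}(0,1)):v\in S^{\perp}\cap H_1,\ |v|\le\rho_0\}$, an expression involving only the fixed ball $\mathbb{B}(0,1)$ and Euclidean translates of $\mathbb{W}$.

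With $\beta_d$ in this form the conclusion is immediate: for $v_1\in S_1^{\perp}\cap H_1$ with $|v_1|\le\rho_0$ put $v_2=Tv_1$; then $|v_2|=|v_1|\le\rho_0$, $v_2\in S_2^{\perp}\cap H_1$, and since $T$ is linear, Euclidean-isometric, with $T(\mathbb{W}_1)=\mathbb{W}_2$ and $T(\mathbb{B}(0,1))=\mathbb{B}(0,1)$,
\[
\mathcal{H}^p_E\big((\mathbb{W}_1-v_1)\cap\mathbb{B}(0,1)\big)=\mathcal{H}^p_E\big(T((\mathbb{W}_1-v_1)\cap\mathbb{B}(0,1))\big)=\mathcal{H}^p_E\big((\mathbb{W}_2-v_2)\cap\mathbb{B}(0,1)\big).
\]
As $v_1\mapsto Tv_1$ is a bijection between the two admissible center-balls, the maxima agree and $\beta_d(\mathbb{W}_1)=\beta_d(\mathbb{W}_2)$; the cases $p=1$ and $p=2n+1$ are trivial, as only one vertical subgroup of that dimension exists.

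I expect the main obstacle to be precisely the two center-reductions of the second paragraph. The naive attempt to force $T$ itself to be a $d$-isometry founders on the fact that an element of $O(H_1)$ fixing $H_2$ is a group isometry only when it also preserves the symplectic form $\omega$, which the family $\mathcal{F}$ need not do. The reductions circumvent this by expressing $\beta_d(\mathbb{W})$ through the single ball $\mathbb{B}(0,1)$ and purely Euclidean translates, so that only $T(\mathbb{B}(0,1))=\mathbb{B}(0,1)$—rather than isometric invariance of all balls—is required. Verifying that both shears have unit Jacobian and that the radiality hypothesis produces the full center-ball is the technical heart of the argument.
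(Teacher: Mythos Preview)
Your argument is correct. The paper does not supply its own proof of this theorem; it simply quotes it as a specialization to Heisenberg groups of \cite[Theorem~1.1]{Magnani2020}. What you provide is therefore a self-contained proof tailored to $\mathbb{H}^n$. The key idea you identified---left-translating the center $z$ along $\mathbb{W}$ to a point $v\in S^\perp\cap H_1$, and then using that $l_{v^{-1}}$ carries $\mathbb{W}$ onto the Euclidean affine translate $\mathbb{W}-v$ with unit Jacobian---is exactly what reduces $\beta_d(\mathbb{W})$ to a maximum of $\mathcal{H}^p_E$-measures of sections of the single fixed ball $\mathbb{B}(0,1)$, so that only the hypothesis $T(\mathbb{B}(0,1))=\mathbb{B}(0,1)$ is needed rather than $T$ being a $d$-isometry. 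Your observation that the elements of $\mathcal{O}$ need not be group automorphisms (since $L\in O(H_1)$ need not preserve $\omega$) is correct and is precisely why this reduction is essential. The cited reference treats general homogeneous groups; your argument trades that generality for a short explicit proof exploiting the two-step structure of $\mathbb{H}^n$, in which the relevant left translations are affine shears of unit Jacobian and the horizontal projection of a center is unchanged along a $\mathbb{W}$-orbit modulo $S$.
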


The previous theorem motivates the following definition.

\begin{deff}[Notation for constant spherical factors]\label{not:rotpn}
Let $\cN_p$ be the family of all $p$-dimensional 
vertical subgroups of $\H^n$.
We consider a homogeneous distance $d$.
We assume that the spherical factor 
$\beta_d(S)$ remains constant as $S$ varies in $\cN_p$
(this means that $d$ is {\em rotationally symmetric} with respect to $\cN_p$).
We denote the constant spherical factor by $\omega_d(p)$, 
without indicating the class $\cN_p$.
\end{deff}

\begin{deff}[{\cite[Definition 8.5]{Magnani2019}}]\label{d:multiradial}
Let $d$ be a homogeneous distance on $\mathbb{H}^n$.
We say that $d$ is \emph{multiradial} if there exists a function $\theta : [0, + \infty)^2 \to  [0,+ \infty)$, which is continuous and monotone non-decreasing on each single variable, with $$ d(x,0) = \theta(|\pi_{H_1}(x)|, |\pi_{H_2}(x)|).$$
The function $\theta$ is also assumed to be coercive in the sense that $\theta(x) \to +\infty$ as $|x| \to + \infty$. 
\end{deff}

\begin{prop}
\label{multiimpliesvert}
If $d: \mathbb{H}^n \times \mathbb{H}^n \to [0, \infty)$ is multiradial, then it is also $p$-vertically symmetric for every $p=1, \dots, 2n+1$.
\end{prop}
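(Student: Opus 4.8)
The plan is to show that a multiradial distance satisfies the two structural requirements in Definition~\ref{def:vertsymm}: the existence of a suitable family $\mathcal{F}\subset O(H_1)$ acting transitively on $(p-1)$-dimensional subspaces, and the two bullet conditions describing the horizontal profile and the invariance of the unit ball. The key observation is that multiradiality makes $d(x,0)$ depend on $x$ only through the two numbers $|\pi_{H_1}(x)|$ and $|\pi_{H_2}(x)|$, so the metric unit ball is invariant under any orthogonal transformation that separately preserves $H_1$ and $H_2$ and respects these norms.

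First I would take $\mathcal{F}=O(H_1)$, the full orthogonal group of the horizontal layer. Transitivity on $(p-1)$-dimensional subspaces is then immediate from elementary linear algebra: given any two subspaces $S_1,S_2\subset H_1$ of the same dimension, there is an orthogonal map of $H_1$ carrying one onto the other. This settles the hypothesis on $\mathcal{F}$ for every $p$. Next I would verify the first bullet. Writing $\theta$ for the coercive profile function of $d$, I would show
\[
\mathbb{B}(0,1)\cap H_1=\set{h\in H_1:\theta(|\pi_{H_1}(h)|,0)\le 1},
\]
since points of $H_1$ have vanishing $\pi_{H_2}$-component. Setting $\theta_0(t)=\theta(t,0)$ gives a monotone non-decreasing function of a single variable and an appropriate threshold $r_0=1$, matching the required form $\set{h:\theta_0(|\pi_{H_1}(h)|)\le r_0}$. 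The equality $\pi_{H_1}(\mathbb{B}(0,1))=\mathbb{B}(0,1)\cap H_1$ follows because multiradiality forces the ball to be a product-type region in the $(|\pi_{H_1}|,|\pi_{H_2}|)$ variables.

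For the second bullet I would take an arbitrary $T\in\mathcal{O}$, so $T|_{H_2}=\mathrm{Id}$ and $T|_{H_1}\in O(H_1)$. The crucial point is that such $T$ preserves both $|\pi_{H_1}(x)|$ and $|\pi_{H_2}(x)|$: the former because $T|_{H_1}$ is an isometry of $H_1$ and $T$ respects the $H_1\oplus H_2$ splitting, the latter because $T$ fixes $H_2$ pointwise. Hence $d(Tx,0)=\theta(|\pi_{H_1}(Tx)|,|\pi_{H_2}(Tx)|)=\theta(|\pi_{H_1}(x)|,|\pi_{H_2}(x)|)=d(x,0)$, and left-invariance of the homogeneous norm gives $T(\mathbb{B}(0,1))=\mathbb{B}(0,1)$. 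This establishes $p$-vertical symmetry for every $p=1,\ldots,2n+1$.

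The only genuinely delicate point is the equality $\pi_{H_1}(\mathbb{B}(0,1))=\mathbb{B}(0,1)\cap H_1$, which must be argued rather than asserted: one inclusion is trivial since any $h\in H_1$ with $\theta_0(|h|)\le 1$ already lies in the ball, and the reverse inclusion requires knowing that whenever $x\in\mathbb{B}(0,1)$ its horizontal projection $\pi_{H_1}(x)$ also lies in $\mathbb{B}(0,1)$. This is exactly where coercivity and the monotonicity of $\theta$ in the second variable enter: since $\theta$ is non-decreasing in $|\pi_{H_2}|$, replacing $\pi_{H_2}(x)$ by $0$ can only decrease $d(\cdot,0)$, so $\pi_{H_1}(x)$ remains in the unit ball. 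I expect this monotonicity-driven projection argument to be the main step requiring care; everything else reduces to the invariance of the two projected norms under $\mathcal{O}$.
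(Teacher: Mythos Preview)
The paper does not actually prove this proposition; it simply states it and remarks that ``a more general statement can be found in \cite{Mag21prRotSym}''. So there is no argument in the paper to compare against, and your task is really to supply a self-contained verification of Definition~\ref{def:vertsymm} from Definition~\ref{d:multiradial}.

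Your proposal does exactly that and is correct. Taking $\mathcal{F}=O(H_1)$ handles the transitivity on $(p-1)$-dimensional horizontal subspaces for all $p$ at once; the description of $\mathbb{B}(0,1)\cap H_1$ via $\theta_0(t)=\theta(t,0)$ and $r_0=1$ is fine because $\theta$ is non-decreasing in its first variable; and the invariance $T(\mathbb{B}(0,1))=\mathbb{B}(0,1)$ follows from $\|Tx\|=\theta(|\pi_{H_1}(Tx)|,|\pi_{H_2}(Tx)|)=\theta(|\pi_{H_1}(x)|,|\pi_{H_2}(x)|)=\|x\|$. The projection identity $\pi_{H_1}(\mathbb{B}(0,1))=\mathbb{B}(0,1)\cap H_1$ is indeed the only point needing an argument, and your use of monotonicity in the second variable of $\theta$ is the right one: $\theta(|\pi_{H_1}(x)|,0)\le\theta(|\pi_{H_1}(x)|,|\pi_{H_2}(x)|)\le 1$.

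Two cosmetic remarks. First, coercivity of $\theta$ plays no role in the argument you outline (it is only monotonicity in the second variable that is needed for the projection inclusion), so you can drop that word. Second, the phrase ``left-invariance of the homogeneous norm gives $T(\mathbb{B}(0,1))=\mathbb{B}(0,1)$'' is a slip: left-invariance is not what is used there, only the identity $\|Tx\|=\|x\|$ you have just established.
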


A more general statement can be found in \cite{Mag21prRotSym}. One may also check that
both $d_{\infty}$ and the Cygan--Kor\'anyi distance are multiradial.
One can find conditions under which the spherical factor has a simpler representation. The next theorem is established in \cite[Theorem~1.4]{Magnani2020}.

\begin{teo}\label{t:pallaconvex}
If $p=1,\ldots,2n+1$ and $d$ is a homogeneous distance in $\mathbb{H}^n$ whose unit ball $\mathbb{B}(0,1)$ is convex, then for every $p$-dimensional vertical subgroup $\mathbb{W}$ we have
$$\beta_d(\mathbb{W})= \mathcal{H}^{p}_E(\mathbb{W} \cap \mathbb{B}(0,1)).$$
\end{teo}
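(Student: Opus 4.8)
The final statement to prove is Theorem~\ref{t:pallaconvex}: if the unit ball $\mathbb{B}(0,1)$ is convex, then for every $p$-dimensional vertical subgroup $\mathbb{W}$ we have $\beta_d(\mathbb{W})=\mathcal{H}^{p}_E(\mathbb{W}\cap\mathbb{B}(0,1))$. Recall that by Definition~\ref{def:sphfactor} the spherical factor is $\beta_d(\mathbb{W})=\max_{z\in\mathbb{B}(0,1)}\mathcal{H}^{p}_E(\mathbb{W}\cap\mathbb{B}(z,1))$, so the claim is that the maximizing center can be taken to be the origin. Since $\mathbb{B}(z,1)=z\,\mathbb{B}(0,1)$ is merely a left translate of the fixed unit ball, and left translations are affine maps of $\mathbb{R}^{2n+1}$ preserving $\mathcal{H}^{p}_E$ on any $p$-plane parallel to $\mathbb{W}$, the whole problem reduces to a statement in convex geometry about sections of a convex body by parallel affine hyperplanes (or affine subspaces) of a fixed direction.

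\textbf{Approach.} The plan is to fix the vertical subgroup $\mathbb{W}$ and study how $\mathcal{H}^{p}_E(\mathbb{W}\cap\mathbb{B}(z,1))$ varies with the center $z\in\mathbb{B}(0,1)$, reducing it to the Euclidean geometry of slicing the convex set $C:=\mathbb{B}(0,1)$ by translates of the linear subspace $\mathbb{W}$. First I would make the left-translation structure explicit: writing the group law in the coordinates of Section~\ref{sect:symplectic}, left translation $l_z(y)=zy$ is an affine map whose linear part is unipotent, and crucially, because $\mathbb{W}$ is a \emph{vertical} subgroup (hence contains $H_2=\mathrm{span}\,e_{2n+1}$ and is normal), the coset $z\,\mathbb{W}$ is a translate of $\mathbb{W}$ by a vector whose only relevant component is the horizontal displacement $\pi_{H_1}(z)$; the vertical coordinate contributes a shear along directions inside $\mathbb{W}$. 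The key observation is that $\mathbb{W}\cap\mathbb{B}(z,1)=z\bigl((z^{-1}\mathbb{W})\cap C\bigr)$, and since $z^{-1}\mathbb{W}$ is an affine translate of the linear space $\mathbb{W}$, the Euclidean $p$-volume $\mathcal{H}^{p}_E(\mathbb{W}\cap\mathbb{B}(z,1))$ equals the $p$-volume of the section of $C$ by an affine $p$-plane parallel to $\mathbb{W}$ (translations being $\mathcal{H}^{p}_E$-isometries on parallel planes). Thus the quantity to maximize over $z$ is exactly the $p$-dimensional sectional area of the convex body $C$ as the affine slice moves parallel to itself.

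\textbf{Core convexity step.} The heart of the argument is the classical Brunn--Minkowski principle for sections of a convex body: if $C\subset\mathbb{R}^N$ is convex and symmetric, then the function $t\mapsto \mathcal{H}^{p}_E(C\cap(P+t\,u))$, where $P$ is a fixed $p$-plane through the origin and $u$ ranges over a transverse direction, attains its maximum at $t=0$, i.e.\ on the central slice. More precisely, $t\mapsto \mathcal{H}^{p}_E(C\cap(P+tu))^{1/p}$ is concave on its support by Brunn's theorem, and symmetry of $C$ (which holds because $\mathbb{B}(0,1)$ is symmetric under $x\mapsto x^{-1}=-x$, as the homogeneous norm satisfies $\|x\|=\|x^{-1}\|$) forces this concave function to be even, hence maximized at the center $t=0$. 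Applying this with $P=\mathbb{W}$ gives that among all affine translates of $\mathbb{W}$ meeting $C$, the central section $\mathbb{W}\cap C$ has the largest $p$-area. Since every admissible center $z\in\mathbb{B}(0,1)$ produces a section that is (via the identification above) one such parallel slice, we conclude $\beta_d(\mathbb{W})=\mathcal{H}^{p}_E(\mathbb{W}\cap C)=\mathcal{H}^{p}_E(\mathbb{W}\cap\mathbb{B}(0,1))$, and the center $z=0$ is admissible so the maximum is attained.

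\textbf{Main obstacle.} I expect the delicate point to be the reduction identifying $\mathbb{W}\cap\mathbb{B}(z,1)$ with a slice of $C$ by a plane \emph{parallel} to the linear space $\mathbb{W}$, because the group translation is a genuine shear in the vertical direction rather than a Euclidean translation. The verification that must be done carefully is that, for vertical $\mathbb{W}$, the set $z^{-1}\mathbb{W}$ is the affine plane $\mathbb{W}+\pi_{H_1}(z^{-1})$ up to a shear that maps $\mathbb{W}$ to itself: concretely, since $\mathbb{W}\supset H_2$ and the nonlinear part of the product lands in $H_2\subset\mathbb{W}$, the map $y\mapsto z^{-1}y$ restricted to the relevant plane differs from a Euclidean translation only by a shear preserving $\mathbb{W}$ and hence preserving $\mathcal{H}^{p}_E$ (its linear part being unipotent with determinant $1$ on the plane). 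Once this volume-preservation is nailed down, the convex-geometric Brunn symmetrization finishes the proof cleanly; the symmetry $\mathbb{B}(0,1)=-\mathbb{B}(0,1)$ needed to center the maximum is immediate from $\|{-x}\|=\|x\|$.
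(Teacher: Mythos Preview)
The paper does not give its own proof of this theorem: it is simply quoted from \cite[Theorem~1.4]{Magnani2020}, so there is no in-paper argument to compare against. Your strategy is correct and is in fact the standard route to this result. The two technical reductions you flag are exactly the ones that need checking, and both go through as you describe: (i) since $\mathbb{W}$ is vertical, the bracket term $\tfrac12[z^{-1},w]$ always lands in $H_2\subset\mathbb{W}$, so $z^{-1}\mathbb{W}=-z+\mathbb{W}$ as a Euclidean affine plane and the linear part of $l_z$ restricted to $\mathbb{W}$ is unipotent with determinant $1$, hence $\mathcal{H}^{p}_E(\mathbb{W}\cap\mathbb{B}(z,1))=\mathcal{H}^{p}_E\big((-z+\mathbb{W})\cap C\big)$; (ii) the ball $C=\mathbb{B}(0,1)$ is origin-symmetric because $\|{-x}\|=\|x^{-1}\|=\|x\|$, so Brunn's theorem for parallel sections gives that the central slice $\mathbb{W}\cap C$ has maximal $\mathcal{H}^{p}_E$-measure among all slices by affine translates of $\mathbb{W}$. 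Taking $z=0\in\mathbb{B}(0,1)$ realizes this maximum in the definition of $\beta_d(\mathbb{W})$.

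One small clarification to tighten your write-up: Brunn's concavity statement you invoke is for the family of sections indexed by $v\in\mathbb{W}^{\perp}$, namely $v\mapsto \mathcal{H}^{p}_E\big((v+\mathbb{W})\cap C\big)^{1/p}$, not just along a single line $t\mapsto tu$; you need the full version because $\mathbb{W}^{\perp}$ has dimension $2n+1-p$, not $1$. The conclusion (even concave function on a convex symmetric domain is maximized at the origin) is of course unchanged.
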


\section{Upper blow-up of low codimensional $\H$-regular surfaces}

In this section we prove the main technical tool of the paper, 
that is the equality between spherical Federer density 
and spherical factor, established in Theorem~\ref{Maintheorem}.
The next lemma will be important for the proof of
our technical result. It gives a formula of how the 
area transforms under a suitable linear isomorphism 
between two vertical groups.

\begin{lem}\label{l:MVPsubgroups}
We consider two vertical subgroups $\M$, $\W$ of $\H^n$ and a $k$-dimensional horizontal subgroup $\V\subset \mathbb{H}^n$ such that 
\[
\H^n=\M\rtimes \V=\W\rtimes\V.
\]
We introduce the multivectors 
\[
V=v_1\wedge \cdots\wedge v_k,\quad N=w_1\wedge \cdots\wedge w_{2n-k}\wedge e_{2n+1},\quad
M=m_1\wedge\cdots\wedge m_{2n-k}\wedge e_{2n+1},
\]
where $(v_1,\ldots, v_k)$, $(w_1, \ldots w_{2n-k},e_{2n+1})$ and $(m_1,\ldots, m_{2n-k},e_{2n+1})$
are orthonormal bases of $\V$, $\W$ and $\M$, respectively.
Then for every Borel set $B\subset \M$, we have
$$ (\pi_{\M,\W}^{\mathbb{M,V}})_\sharp \mathcal{H}_E^{2n+1-k} (B)=
\mathcal{H}_E^{2n+1-k}(\pi_{\W,\mathbb{M}}^{\mathbb{W,V}}(B))= \frac{\Vert V \wedge M \Vert_g}{\Vert V \wedge N \Vert_g} \mathcal{H}^{2n+1-k}_E(B),
$$
where the projections $\pi^{\M,\V}_{\M,\W}$ and $\pi^{\W,\V}_{\W,\M}$ have been introduced in Definition~\ref{d:projWVM}. The norms of $V\wedge M$ and $V\wedge N$ are taken with respect to the Hilbert structure of $\Lambda_{2n+1}(\H^n)$ induced by our scalar product on $\H^n$.

\end{lem}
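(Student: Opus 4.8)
The plan is to reduce everything to the computation of a single constant Jacobian. The first equality is immediate: by the definition of pushforward measure and by \eqref{eq:invFactor}, the map $\pi^{\W,\V}_{\W,\M}$ is the inverse of $\pi^{\M,\V}_{\M,\W}$, so for a Borel set $B\subset\M$ the preimage of $B$ under $\pi^{\M,\V}_{\M,\W}$ coincides with the image $\pi^{\W,\V}_{\W,\M}(B)$, whence $(\pi^{\M,\V}_{\M,\W})_\sharp\mathcal{H}^{2n+1-k}_E(B)=\mathcal{H}^{2n+1-k}_E(\pi^{\W,\V}_{\W,\M}(B))$. Thus the only real content is the second equality, i.e.\ the determination of the constant Jacobian of the bijection $\Psi:=\pi^{\W,\V}_{\W,\M}:\M\to\W$ (which is invertible, hence a polynomial diffeomorphism, by \eqref{eq:invFactor}).

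For the second equality I would argue in coordinates. Let $V_0,W_0,M_0\subset H_1$ denote the horizontal parts of $\V,\W,\M$; the two factorizations force $H_1=V_0\oplus W_0=V_0\oplus M_0$, while $H_2=\mathrm{span}(e_{2n+1})$ lies in both $\W$ and $\M$. I would use the given orthonormal bases to identify $\M$ and $\W$ with $\R^{2n+1-k}$, so that $\mathcal{H}^{2n+1-k}_E$ becomes Lebesgue measure on each of them. Writing $m=(\zeta,s)$ with $\zeta\in M_0$, and decomposing $\zeta=\eta+\xi$ along $H_1=W_0\oplus V_0$, the defining relation $m=\pi^{\W,\V}_\W(m)\,\pi^{\W,\V}_\V(m)$ and the group law give
\[
\pi^{\W,\V}_\W(m)=\Big(\eta,\ s-\tfrac12\omega(\eta,\xi)\Big).
\]
Consequently, in these coordinates the horizontal component of $\Psi$ is the linear projection $P:M_0\to W_0$ onto $W_0$ along $V_0$, with matrix $A$ in the bases $(m_i)$ and $(w_j)$, whereas the central component is $s$ plus a quadratic form in $\zeta$. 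Hence $D\Psi$ is block lower-triangular with diagonal blocks $A$ and $1$, so $\det D\Psi\equiv\det A$ is constant even though $\Psi$ itself is not linear. The change of variables formula applied to the diffeomorphism $\Psi$ then yields $\mathcal{H}^{2n+1-k}_E(\Psi(B))=|\det A|\,\mathcal{H}^{2n+1-k}_E(B)$.

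It remains to identify $|\det A|$ with the ratio $\Vert V\wedge M\Vert_g/\Vert V\wedge N\Vert_g$. Since $e_{2n+1}$ factors out of both $M$ and $N$ and $\Lambda_{2n+1}(\H^n)$ is one-dimensional, $\Vert V\wedge M\Vert_g$ and $\Vert V\wedge N\Vert_g$ equal the absolute values of the determinants of the $2n\times2n$ horizontal coordinate matrices $[\,v_1|\cdots|v_k|m_1|\cdots|m_{2n-k}\,]$ and $[\,v_1|\cdots|v_k|w_1|\cdots|w_{2n-k}\,]$. Writing $m_i=P(m_i)+\xi_i$ with $\xi_i\in V_0=\mathrm{span}(v_1,\dots,v_k)$, I would subtract from each $m$-column its $V_0$-part, which adds to it a combination of the first $k$ columns and hence leaves the determinant unchanged; this replaces the first matrix by $[\,v_1|\cdots|v_k|P(m_1)|\cdots|P(m_{2n-k})\,]$. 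Since $P(m_i)=\sum_j A_{ji}w_j$, the last block equals $[\,w_1|\cdots|w_{2n-k}\,]A$, so this matrix factors as $[\,v_1|\cdots|v_k|w_1|\cdots|w_{2n-k}\,]\,\mathrm{diag}(I_k,A)$ and its determinant is $\det A$ times that of $[\,v_1|\cdots|v_k|w_1|\cdots|w_{2n-k}\,]$. Taking absolute values gives $\Vert V\wedge M\Vert_g=|\det A|\,\Vert V\wedge N\Vert_g$, as desired.

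The main obstacle is the step in the second paragraph: one must recognize that $\Psi$ is not a linear map, because the central coordinate acquires a quadratic symplectic correction $-\tfrac12\omega(\eta,\xi)$, and yet its Jacobian determinant is constant precisely because $D\Psi$ is block lower-triangular with the scalar $1$ in the central slot. This is what guarantees that $\mathcal{H}^{2n+1-k}_E$ transforms by a single multiplicative constant. Matching that constant with the multivector ratio in the third paragraph is then an elementary but careful sequence of determinant-preserving column operations.
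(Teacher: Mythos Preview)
Your proof is correct. Both you and the paper rely on the same core observation: although the projection $\pi^{\W,\V}_{\W,\M}$ is not linear (the central coordinate picks up a quadratic symplectic term), its Jacobian is block lower-triangular with diagonal blocks the linear horizontal projection $A$ and the scalar $1$, so the Jacobian determinant is the constant $|\det A|$. The difference is in the packaging. The paper introduces two global ``graph parametrizations'' $\Psi_1,\Psi_2:\R^{2n+1}\to\H^n$ associated to the factorizations $\W\rtimes\V$ and $\M\rtimes\V$, notes directly that $J\Psi_1=\|V\wedge N\|_g$ and $J\Psi_2=\|V\wedge M\|_g$, and then writes $\pi^{\W,\V}_{\W,\M}$ as $i_\W^{-1}\circ p\circ\Psi_1^{-1}\circ\Psi_2\circ q\circ i_\M$; the constant Jacobian of this composition is read off from the block form of $D(\Psi_1^{-1}\circ\Psi_2)$, and the identification with the multivector ratio comes for free from the chain rule. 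You instead compute $\pi^{\W,\V}_\W$ explicitly on $\M$ and then identify $|\det A|$ with $\|V\wedge M\|_g/\|V\wedge N\|_g$ by elementary column operations. Your route is shorter and more transparent; the paper's route has the slight conceptual advantage that the wedge norms appear intrinsically as Jacobians of the parametrizing maps rather than via an ad hoc determinant manipulation.
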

\begin{proof}
It is clearly not restrictive to relabel the bases of $\M$ and $\W$ as 
$w_{k+1},\ldots,w_{2n},e_{2n+1}$ and $m_{k+1},\ldots,m_{2n},e_{2n+1}$.
We define the isomorphisms $i_\W : \mathbb{W} \to \mathbb{R}^{2n+1-k}$, 
\[
i_\W\pa{x_{2n+1} e_{2n+1}+\sum_{i=k+1}^{2n} x_iw_i }=(x_{k+1},\ldots, x_{2n+1})
\]
and $i_\M: \M\to \mathbb{R}^{2n+1-k}$, 
\[
i_\M\pa{x_{2n+1} e_{2n+1}+\sum_{i=k+1}^{2n} x_i m_i}=(x_{k+1},\ldots, x_{2n+1})
\]
and $i_{\V} : \V \to \R^k$
\[
i_\V\pa{\sum_{i=i}^{k} x_i v_i}=(x_{1},\ldots, x_{k}).
\]

We introduce $\Psi_1:\mathbb{R}^{2n+1}\to\mathbb{H}^n$,

\begin{equation}\label{eq:Psi1}
\Psi_1(x_1, \dots , x_{2n+1})=	\Big(x_{2n+1} e_{2n+1}+\sum_{i=k+1}^{2n} x_i w_i\Big)\Big(\sum_{j=1}^k x_i v_i\Big).
\eeq
We now notice that $J\Psi_1(x)= \Vert V \wedge N \Vert_g$
for every  $x=(x_1, \ldots , x_{2n+1})\in\R^{2n+1}$.
It suffices to observe that 
\[
J\Psi_1=\|\der_{x_1}\Psi_1\wedge \cdots \der_{x_{2n+1}}\Psi_{2n+1}\|_g
\]
and use the explicit form of \eqref{eq:Psi1}. We define another map $\Psi_2:\mathbb{R}^{2n+1}\to \mathbb{H}^n$,
\[
\Psi_2(x_1, \dots , x_{2n+1})=\Big(x_{2n+1} e_{2n+1}+\sum_{i=k+1}^{2n} x_i m_i\Big)\Big(\sum_{j=1}^k x_i v_i\Big),
\]
and we observe in the same way that $J \Psi_2(x)= \Vert V \wedge M \Vert_g.$ 
We introduce the embedding $q:\mathbb{R}^{2n+1-k} \to \mathbb{R}^{2n+1}$,
\[
q(x_1, \dots, x_{2n+1-k})= (0, \dots, 0, x_1, \dots, x_{2n+1-k})
\]
and the projection $p: \mathbb{R}^{2n+1} \to \mathbb{R}^{2n+1-k}$,
\[
p(x_1, \dots, x_{2n+1})=(x_{k+1}, \dots, x_{2n+1}).
\]
For every $z\in\H^n$, we observe that
\[
\Psi_1^{-1}(z)=\pa{i_\V\circ\pi_\V(z),i_\W\circ \pi_\W(z)}.
\]
It follows that 
\[
i_\W^{-1}\circ p\circ \Psi_1^{-1}=\pi_\W.
\]
If we take any $m\in\M$, then
\begin{equation}\label{eq:piWMm}
\begin{split}
\pi_\W(m)&=i_\W^{-1}\circ p\circ \Psi_1^{-1}\circ\Psi_2\circ\Psi_2^{-1}(m) \\
&=i_\W^{-1}\circ p\circ \Psi_1^{-1}\circ\Psi_2\circ q\circ i_\M(m)\\
&=\pi^{\W,\V}_{\W,\M}(m).
\end{split}
\end{equation}
The second equality follows by the identity 
\[
\Psi_2^{-1}=(i_\V\circ\pi_\V,i_\M\circ\pi_\M),
\]
hence $\Psi_2^{-1}(m)=(0,i_\M(m))$ for all $m\in\M$.
We notice that $\Psi_1^{-1}\circ\Psi_2$ is a polynomial diffeomorphism, whose Jacobian matrix at $x$
has the following form 
\[
\pa{\begin{array}{ccc} 
I & R_1 & 0 \\
0 & R_2 & 0 \\
\ell_1(x) & \ell_2(x) & 1
\end{array}}\in\R^{(2n+1)\times(2n+1)},
\]
where $I\in\R^{k\times k}$, $R_1\in \R^{k\times (2n-k)}$, $R_2 \in \R^{(2n+1-k) \times (2n+1-k)}$  and
the functions 
\[
\ell_1:\R^{2n+1}\to\R^k\quad \text{and}\quad \ell_2:\R^{2n+1}\to\R^{2n-k}
\]
are affine. From definition of $q:\R^{2n+1-k}\to \R^{2n+1}$ and of $p:\R^{2n+1}\to\R^{2n+1-k}$,
by explicit computation, it follows that
\beq\label{eq:formula}
J(\Psi_1^{-1}\circ\Psi_2)(q(y))=|\det R_2|=J(p\circ \Psi_1^{-1}\circ \Psi_2\circ q)(y),
\eeq
for every $y\in\R^{2n+1-k}$.
As a consequence, taking into account \eqref{eq:piWMm}, \eqref{eq:formula} and 
\[
\frac{\|V\wedge M\|_g}{\|V\wedge N\|_g}=J(\Psi_1^{-1}\circ\Psi_2),
\]
the following equalities hold
\begin{equation*}
\begin{split}
\mathcal{H}^{2n+1-k}_E(B) &= \mathcal{L}^{2n+1-k}(i_{\M}(B)) \\
&= \frac{\Vert V  \wedge N \Vert_g}{\Vert V \wedge M \Vert_g}  \mathcal{L}^{2n+1-k}((p\circ \Psi_1^{-1}\circ \Psi_2\circ q)(i_{\M}(B)))\\
&= \frac{\Vert V  \wedge N \Vert_g}{\Vert V \wedge M \Vert_g} \mathcal{H}^{2n+1-k}_E((i_{\W}^{-1}\circ p\circ \Psi_1^{-1}\circ \Psi_2\circ q\circ i_{\M})(B))\\
&= \frac{\Vert V  \wedge N \Vert_g}{\Vert V \wedge M \Vert_g} \mathcal{H}^{2n+1-k}_E(\pi_{\W, \M}^{\W, \V}(B))
\end{split}
\end{equation*}
for every Borel set $B\subset\M$.
\end{proof}
We are now in the position to present our main technical result.

\begin{teo}[Upper blow-up]	\label{Maintheorem}
We consider a semidirect factorization $\H^n=\W\rtimes \V$,
an open set $\Omega\subset\H^n$, a function $f\in C^1_h(\Omega,\R^k)$
and a homogeneous distance $d$. 
We fix $x_0\in\Omega$ and the level set 
$\Sigma=f^{-1}(f(x_0))$, assuming that $J_\V f(x)>0$ for all $x\in\Sigma$. 
We choose the orthonormal bases $(v_1, \dots v_k)$ of $\V$
and $(w_{k+1}, \dots, w_{2n}, e_{2n+1})$ of $\W$,
setting $V= v_1 \wedge \dots \wedge v_k $ and
$N= w_{k+1} \wedge \dots \wedge w_{2n} \wedge e_{2n+1}$. 
Then the following conditions hold.
\begin{enumerate}
\item $\Sigma$ is a parametrized $\H$-regular surface 
with respect to $(\W,\V)$.
\item 
If we denote by $\phi:U\to \V$ the parametrization of $\Sigma$ and 
introduce the measure
	\begin{equation}\label{eq:areamu}
		\mu(B)= \Vert V \wedge N \Vert_g \int_{\Phi^{-1}(B)} \frac{J_Hf(\Phi(n))}{J_\V f(\Phi(n))}  \  d \mathcal{H}_E^{2n+1-k} (n)
	\end{equation}
for every Borel set $B \subset \mathbb{H}^n$, where $\Phi(n)=n\phi(n)$, 
	then for every $x \in \Sigma$ we have 
	\begin{equation}\label{eq:equalityBetaSigma}
	\theta^{2n+2-k}( \mu, x)= \ \beta_{d}(\Tan(\Sigma,x)).
	\end{equation}
\end{enumerate}
\end{teo}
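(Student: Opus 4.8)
Item (1) is immediate from Proposition~\ref{prop:parametrJV}: the hypothesis $J_\V f>0$ on $\Sigma$ guarantees that $\Sigma$ is a parametrized $\H$-regular surface with respect to $(\W,\V)$ and that $f$ is a defining function, the parametrization $\phi\colon U\to\V$ being uniformly intrinsic differentiable by Theorem~\ref{t:ArenaSerapioni}. So the substance of the statement is the density identity \eqref{eq:equalityBetaSigma}, which I would prove by a blow-up at a fixed $x\in\Sigma$. Writing $x=\Phi(\bar w)=\bar w\phi(\bar w)$ with $\bar w\in U$, the first task is to pin down the tangent cone. Since $f\circ\Phi$ is constant on $U$, the chain rule of Theorem~\ref{t:chain} yields $\ker Df(x)=\mathrm{graph}(d\phi_{\bar w})$, and Proposition~\ref{prop:Tan} identifies this with $\M:=\Tan(\Sigma,x)$. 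As $J_\V f(x)>0$ forces $Df(x)|_\V$ to be an isomorphism, $\M$ is a vertical subgroup with $\H^n=\M\rtimes\V$; thus $(\M,\W,\V)$ is exactly the configuration of Lemma~\ref{l:MVPsubgroups}.

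The blow-up itself rests on rewriting $\mu=\Phi_\sharp(g\,\mathcal H^{2n+1-k}_E\res U)$ with the continuous density $g=\|V\wedge N\|_g\,(J_Hf/J_\V f)\circ\Phi$. For a center $z$ with $d(x,z)\le r$ set $\zeta=\delta_{1/r}(x^{-1}z)\in\mathbb B(0,1)$; left-invariance and homogeneity of $d$ give $\mathbb B(z,r)=x\,\delta_r\mathbb B(\zeta,1)$. Performing the change of variables $n=\sigma_x(\delta_r\omega)$, which sends $\omega=0$ to $\bar w$ and scales $\mathcal H^{2n+1-k}_E$ by $r^{2n+2-k}$ (because $\sigma_x$ is measure preserving, by normality of $\W$ and unimodularity, while $\det(\delta_r|_\W)=r^{2n+2-k}$), I obtain
\begin{equation*}
\frac{\mu(\mathbb B(z,r))}{r^{2n+2-k}}=\int_{\{\omega\,:\,\Psi_r(\omega)\in\mathbb B(\zeta,1)\}} g(\sigma_x(\delta_r\omega))\,d\mathcal H^{2n+1-k}_E(\omega),
\end{equation*}
where $\Psi_r(\omega)=\omega\,\delta_{1/r}\phi_{x^{-1}}(\delta_r\omega)$ is the rescaled graph map. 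The intrinsic differentiability of $\phi$ gives the pointwise limit $\Psi_r(\omega)\to\Phi_\M(\omega):=\omega\,d\phi_{\bar w}(\omega)$, while $g(\sigma_x(\delta_r\omega))\to g(\bar w)=\|V\wedge N\|_g\,J_Hf(x)/J_\V f(x)$ by continuity.

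The decisive step --- and the one I expect to be the main obstacle --- is the convergence of the rescaled domains $\{\omega:\Psi_r(\omega)\in\mathbb B(\zeta,1)\}$ to $\Phi_\M^{-1}(\M\cap\mathbb B(\zeta,1))=\pi_{\W,\M}^{\W,\V}(\M\cap\mathbb B(\zeta,1))$, in measure and uniformly in $\zeta\in\mathbb B(0,1)$. Here the pointwise convergence of $\Psi_r$ is not enough; I would invoke the uniform intrinsic differentiability of Theorem~\ref{t:ArenaSerapioni} to make $\Psi_r\to\Phi_\M$ uniform on compact sets of $\omega$, which controls the symmetric difference of the domains, together with the fact that the sphere $\M\cap\partial\mathbb B(\zeta,1)$ pulls back to an $\mathcal H^{2n+1-k}_E$-null set so that set convergence upgrades to measure convergence. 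Passing to the limit inside the integral then gives
\begin{equation*}
\lim_{r\to0}\frac{\mu(\mathbb B(z,r))}{r^{2n+2-k}}=\|V\wedge N\|_g\frac{J_Hf(x)}{J_\V f(x)}\,\mathcal H^{2n+1-k}_E\big(\pi_{\W,\M}^{\W,\V}(\M\cap\mathbb B(\zeta,1))\big).
\end{equation*}

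To close the argument I would apply Lemma~\ref{l:MVPsubgroups} with $B=\M\cap\mathbb B(\zeta,1)$ to replace the projected measure by $\tfrac{\|V\wedge M\|_g}{\|V\wedge N\|_g}\mathcal H^{2n+1-k}_E(\M\cap\mathbb B(\zeta,1))$, and then use the linear-algebra identity $\tfrac{J_Hf(x)}{J_\V f(x)}\|V\wedge M\|_g=1$. This identity holds because $\M\cap H_1$ is the orthogonal complement in $H_1$ of $P:=\mathrm{span}\{\nabla_Hf_1(x),\dots,\nabla_Hf_k(x)\}$, so $\|V\wedge M\|_g$ is the volume spanned by orthonormal bases of $\V$ and $P^\perp$, which equals $|\det(\pi_\V|_P)|=J_\V f(x)/J_Hf(x)$. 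Hence the normalized mass converges to $\mathcal H^{2n+1-k}_E(\M\cap\mathbb B(\zeta,1))$ for each $\zeta$, uniformly in $\zeta$. Inserting this into $\theta^{2n+2-k}(\mu,x)=\inf_{\epsilon>0}\sup\{\mu(\mathbb B(z,r))/r^{2n+2-k}:d(x,z)\le r<\epsilon\}$ --- the uniform upper bound giving ``$\le$'' and the choice of a maximizing center $\zeta^*$ giving ``$\ge$'' --- produces
\begin{equation*}
\theta^{2n+2-k}(\mu,x)=\max_{\zeta\in\mathbb B(0,1)}\mathcal H^{2n+1-k}_E(\M\cap\mathbb B(\zeta,1))=\beta_d(\Tan(\Sigma,x)),
\end{equation*}
which is \eqref{eq:equalityBetaSigma}.
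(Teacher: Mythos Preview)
Your overall architecture is exactly that of the paper: the same change of variables $n=\sigma_x(\delta_r\omega)$ (with Jacobian $r^{2n+2-k}$), the chain rule giving $\Tan(\Sigma,x)=\mathrm{graph}(d\phi_{\bar w})$, the algebraic Lemma~\ref{l:MVPsubgroups} to undo the projection, and the identity $J_Hf(x)/J_\V f(x)=\|V\wedge N_x\|_g^{-1}$ (your argument via $|\det(\pi_\V|_P)|$ is equivalent to the paper's Hodge--star computation). Two points, however, deserve attention.

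First, you over--invoke uniform intrinsic differentiability. At a single blow-up point $x=\Phi(\bar w)$, plain intrinsic differentiability already yields $\Psi_r\to\Phi_\M$ locally uniformly in $\omega$: from $d(\phi_{x^{-1}}(w),d\phi_{\bar w}(w))=o(\|w\|)$ and $\|\delta_r\omega\|=r\|\omega\|$ one gets $\|\delta_{1/r}\phi_{x^{-1}}(\delta_r\omega)-d\phi_{\bar w}(\omega)\|\to0$ uniformly for $\|\omega\|$ bounded. The paper uses only intrinsic differentiability at $\bar w$ (see its Claims~1 and~3).

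Second --- and this is a genuine gap --- you assert that ``the sphere $\M\cap\partial\mathbb B(\zeta,1)$ pulls back to an $\mathcal H^{2n+1-k}_E$-null set'' in order to upgrade the set convergence to convergence in measure, \emph{uniformly} in $\zeta$. For a general homogeneous distance this nullity is not obvious: after reducing to $\zeta\in\V$, the set in question is the level set $\{m\in\M:\|\zeta^{-1}m\|=1\}$ of a merely continuous function on the affine subspace $\zeta^{-1}\M$, and there is no dilation argument available since $\zeta^{-1}\M$ does not pass through the origin. Even if one grants nullity for each fixed $\zeta$, the \emph{uniformity in $\zeta$} of the measure convergence you need for the ``uniform upper bound'' does not follow. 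The paper circumvents both issues. For the inequality $\theta^{2n+2-k}(\mu,x)\le\beta_d(\M)$ it picks an extremizing sequence $(y_p,t_p)$, passes to a subsequence with $\delta_{1/t_p}(x^{-1}y_p)\to z$, and shows that the indicator of the rescaled domain tends to $0$ outside the \emph{closed} set $\pi_{\W,\M}^{\W,\V}(\M\cap\mathbb B(z,1))$, so dominated convergence applies with no boundary term. For the reverse inequality it fixes a maximizing $z_0$ but compares $\mu(\mathbb B(y_t^0,\lambda t))/(\lambda t)^{2n+2-k}$ with the \emph{open} set $\pi_{\W,\M}^{\W,\V}(\M\cap B(z_0,\lambda))$, $\lambda>1$, applies Fatou, uses the trivial inclusion $\mathbb B(z_0,1)\subset B(z_0,\lambda)$, and lets $\lambda\to1^+$. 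This $\lambda$-trick is precisely what replaces your unproved nullity claim; inserting it into your argument would make it complete.
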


\begin{proof}
The first part of our claim is a consequence of 
Proposition~\ref{prop:parametrJV}.
Then our thesis follows once we have proved \eqref{eq:equalityBetaSigma}.
By formula \eqref{eq:areamu}, for any $y \in \Omega$, taking $t>0$ sufficiently small, we can write
\begin{equation} 
\mu(\mathbb{B}(y,t))= 
\Vert V \wedge N \Vert_g \int_{\Phi^{-1}(\mathbb{B}(y,t))} \frac{J_Hf(\Phi(n))}{J_\V f(\Phi(n))}  \  d \mathcal{H}_{E}^{2n+1-k} (n).
\end{equation}
We denote by $\zeta \in U$ the element such that 
\[
x= \Phi(\zeta)= \zeta \phi(\zeta).
\]
We now perform the change of variables
\[
n= \sigma_x(\Lambda_t(\eta))=x(\Lambda_t \eta)(\pi_\V(x))^{-1}=x(\Lambda_t \eta)(\phi(\zeta))^{-1},
\]
where $\Lambda_t= \delta_t|_{\mathbb{W}}$. The Jacobian of $\Lambda_t$ is $ t^{2n+2-k}$.
It is well known that $\sigma_x$ has unit Jacobian (see for instance {\cite[Lemma 2.20]{IntLipgraphs}}).
Setting $\alpha(x)=J_Hf(x)/J_\V f(x)$, we obtain that
\[
\frac{\mu(\mathbb{B}(y,t))}{t^{2n+2-k}} =
\Vert V \wedge N \Vert_g  \int_{\Lambda_{1/t}(\sigma_x^{-1}(\Phi^{-1}(\mathbb{B}(y,t))))} 
(\alpha\circ \Phi)(\sigma_x (\Lambda_t(\eta))))  \  d \mathcal{H}^{2n+1-k}_E (\eta).
\]
By the general definition of spherical Federer density we obtain that
\begin{equation*}
\begin{split}
\theta^{2n+2-k}(\mu ,x)& = \inf_{r>0} \sup_{\substack{y \in \mathbb{B}(x,t)\\ 0<t<r}} \frac{\mu (\mathbb{B}(y,t))}{t^{2n+2-k}} \\
&=  \inf_{r>0} \sup_{\substack{ y \in \mathbb{B}(x,t) \\ 0<t<r}} \Vert V \wedge N \Vert_g \ \int_{\Lambda_{1/t}(\sigma_x^{-1}(\Phi^{-1}(\mathbb{B}(y,t))))} 
(\alpha\circ \Phi) (\sigma_x(\Lambda_t(\eta)) ) \  d \mathcal{H}^{2n+1-k}_E (\eta).
\end{split}
\end{equation*}
There exists $R_0>0$ such that for $t>0$ and $y \in \mathbb{B}(x,t)$ 
we have the following inclusion
\begin{eqnarray}\label{insieme}
\Lambda_{1/t}(\sigma_x^{-1}(\Phi^{-1}(\mathbb{B}(y,t)))) \subset \mathbb{B}_{\mathbb{W}}(0,R_0),
\end{eqnarray}
where the translated function $\phi_{x^{-1}}$ is defined according to formula \eqref{eq:translatedf} and
we have set
\[
\mathbb{B}_{\mathbb{W}}(0,R_0)=\mathbb{B}(0,R_0)\cap \mathbb{W}.
\]
To see \eqref{insieme}, we write more explicitly $\Lambda_{1/t}(\sigma_x^{-1}(\Phi^{-1}(\mathbb{B}(y,t))))$,
that is 
\[
\left\{ \eta \in \Lambda_{1/t}(\sigma_x^{-1}(U)) : \left\|y^{-1}x (\Lambda_t \eta) \phi(\zeta)^{-1} \phi(x(\Lambda_t \eta) \phi(\zeta)^{-1}) \right\|\le t \right\}. 
\]
It can be written as follows
\[
\left\{ \eta \in \Lambda_{1/t}(\sigma_x^{-1}(U)) : \left\|(\delta_{1/t}(y^{-1}  x ))  \eta \left(\frac{\phi(\zeta)^{-1} \phi(x (\Lambda_t \eta) \phi(\zeta)^{-1})}{t}\right) \right\|\le 1 \right\}. 
\]
According to \eqref{eq:translatedf}, the translated function of $\phi$ at $x^{-1}$ is 
\[
\phi_{x^{-1}}(\eta)=\pi_\V(x^{-1})\phi(x\eta\pi_\V(x^{-1}))=
\phi(\zeta)^{-1}\phi(x\eta\phi(\zeta)^{-1}).
\]
We finally get 
\beq\label{eq:setLambda}
\Lambda_{1/t}(\sigma_x^{-1}(\Phi^{-1}(\mathbb{B}(y,t)))) = \left\{ \eta \in \Lambda_{1/t}(\sigma_x^{-1}(U)) : 
\left\|(\delta_{1/t}(y^{-1}  x ))  \eta \left(\frac{\phi_{x^{-1}}(\Lambda_t \eta)}{t}\right)\right\|\le1 \right\},
\eeq
hence for $\eta \in \Lambda_{1/t}(\sigma_x^{-1}(\Phi^{-1}(\mathbb{B}(y,t)))$, taking into account 
the previous equality, we have established that
$$ \eta\left(\frac{\phi_{x^{-1}}(\Lambda_t \eta)}{t}\right) \in \mathbb{B}(0,2).$$
From the estimate \eqref{eq:estimnormsub}, we know that
$$c_0 \left( \Vert \eta \Vert + \left\Vert   \frac{\phi_{x^{-1}}(\Lambda_t\eta)}{t}\right\Vert \right)\leq 
\left\Vert \eta \left(\frac{\phi_{x^{-1}}(\Lambda_t\eta)}{t}\right) \right\Vert \leq 2,$$
hence the inclusion \eqref{insieme} holds with $R_0=2/c_0$. As a consequence, we have that $$ \theta^{2n+2-k}(\mu ,x) < \infty.$$
There exist a positive sequence $t_p$ converging to zero and $y_p \in \mathbb{B}(x,t_p)$ such that
\[
\Vert V \wedge N \Vert_g \int_{\Lambda_{1/t_p}(\sigma_x^{-1}(\Phi^{-1}(\mathbb{B}(y_p,t_p))))} \frac{J_Hf(\Phi (\sigma_x(\Lambda_{t_p} (\eta) )}{J_{\V}f(\Phi(\sigma_x(\Lambda_{t_p} (\eta) ))}\ d  \mathcal{H}^{2n+1-k}_E(\eta)\to \theta^{2n+2-k}(\mu,x)
\]
as $p\to\infty$. Up to extracting a subsequence, since $y_p \in \mathbb{B}(x,t_p)$ for every $p$, there exists $z \in \mathbb{B}(0,1)$ such that
$$ \lim_{p \to \infty} \delta_{1/t_p}(x^{-1}  y_p) = z.$$
For the sake of simplicity, we use the notation 
\[
\M_x=\ker Df(x).
\]
Using the projection introduced in Definition~\ref{d:projWVM}, we set
\[
S_z=\pi_{\W,\M_x}^{\W,\V}(\M_x\cap \mathbb{B}(z,1))\subset\W.
\]
\textbf{Claim 1:} For each $\omega \in \mathbb{W}\sm S_z$, there exists
$$
\lim_{p \to \infty} \bu_{\Lambda_{1/t_p}(\sigma_x^{-1}(\Phi^{-1}(\mathbb{B}(y_p,t_p)))} (\omega)=0.
$$
By contradiction, if we had a subsequence of the integers $p$ such that 
\begin{equation*}
(\delta_{1/t_p}(y_p^{-1}  x )) \omega \pa{\frac{\phi_{x^{-1}}(\Lambda_{t_p} \omega)}{t}}\in \mathbb{B}(0,1),
\end{equation*}
then by a slight abuse of notation, we could still call $t_p$ the sequence such that
\beq\label{eq:inclusion}
(\delta_{1/t_p}(y_p^{-1}  x ))  \omega    d \phi_{\zeta}(\omega)     \pa{\frac{ (d \phi_{\zeta}(\Lambda_{t_p} \omega))^{-1} \phi_{x^{-1}}(\Lambda_{t_p} \omega)}{t_p}}  \in \mathbb{B}(0,1)
\eeq
for all $p$, where we have used the homogeneity of the intrinsic differential $d\phi_\zeta$ of $\phi$,
see Definition~\ref{d:intrinsicDiff} for the notion of intrinsic differential.
Indeed, by Theorem \ref{t:ArenaSerapioni}, the function $\phi$ is in particular
intrinsic differentiable at $\zeta$.
Due to the intrinsic differentiability, taking into account \eqref{eq:inclusion} as $p\to\infty$,
it follows that
$$
\omega d \phi_{\zeta}(\omega) \in \mathbb{B}(z, 1).
$$
It is now interesting to observe that the chain rule of Theorem~\ref{t:chain} yields
\beq\label{eq:graphChain}
\mathrm{graph}(d \phi_{\zeta})= \ker(Df(x)))=\M_x.
\eeq
As a consequence, $ \omega d \phi_{\zeta}(\omega) \in \mathbb{B}(z,1) \cap \M_x$
and then 
\beq\label{eq:piS_z}
\omega=\pi_{\W,\M_x}^{\W,\V}(\omega d \phi_{\zeta}(\omega))\in \pi_{\W,\M_x}^{\W,\V}(\M_x \cap \mathbb{B}(z,1))=S_z,
\eeq
that is not possible by our assumption. This concludes the proof of Claim 1.

Now we introduce the density function
$$
\alpha (t, \eta)=\frac{J_Hf(\Phi( \sigma_x(\Lambda_t(\eta)))}{J_{\V}f(\Phi( \sigma_x(\Lambda_t(\eta)))}
$$
to write 
$$
\Vert V \wedge N \Vert_g \int_{\Lambda_{1/t_p}( \sigma_x^{-1}(\Phi^{-1}(\mathbb{B}(y_p,t_p))))} \alpha(t_p, \eta) \ d   \mathcal{H}^{2n+1-k}_E (\eta) = I_p+J_p.
$$
The sequence $I_p$, defined in the following equality, satisfies the estimate
\[
\begin{split}
I_p&= \Vert V \wedge N \Vert_g \  \int_{S_z \cap \Lambda_{1/t_p}(\sigma_x^{-1}(\Phi^{-1}(\mathbb{B}(y_p,t_p))))} \alpha(t_p, \eta) \ d   \mathcal{H}^{2n+1-k}_E (\eta) \\
&\leq \Vert V \wedge N \Vert_g \int_{S_z } \alpha(t_p, \eta) \ d   \mathcal{H}^{2n+1-k}_E (\eta).
\end{split}
\]
Analogously for $J_p$, we find  
\begin{equation*}
\begin{split}
J_p &= \Vert V \wedge N \Vert_g \ \int_{\Lambda_{1/t_p}(\sigma_x^{-1}(\Phi^{-1}(\mathbb{B}(y_p,t_p)))) \sm S_z} \ \alpha(t_p, \eta) \ d   \mathcal{H}^{2n+1-k}_E (\eta)\\
& \leq  \Vert V \wedge N  \Vert_g \int_{\mathbb{B}_{\mathbb{W}}(0,R_0) \sm S_z} \bu_{\Lambda_{1/t_p}(\sigma_x^{-1}((\Phi^{-1}(\mathbb{B}(y_p,t_p))))}(\eta) \ \alpha(t_p, \eta) \ d   \mathcal{H}^{2n+1-k}_E (\eta).
\end{split}
\end{equation*}
Claim 1 joined with the dominated convergence theorem prove that $J_p\to0$ as $p\to \infty$,
hence $I_p\to\theta^{2n+2-k}(\mu,x)$. To study the asymptotic behavior of $I_p$, we
first observe that
\[
\alpha(t_p,\eta) \to \frac{ J_Hf(x)}{J_{\V}f(x)}=c(x) 
\]
as $p \to \infty$. 
It follows that  
\beq\label{eq:theta}
\theta^{2n+2-k}(\mu,x)=\lim_{p\to\infty}I_p \leq  \Vert V \wedge N \Vert_g \ c(x) \  \mathcal{H}^{2n+1-k}_E(S_z).
\eeq

\textbf{Claim 2}. We set $\M_x=\ker(Df(x))$ and consider $N_x= m_{k+1} \wedge \dots \wedge m_{2n} \wedge e_{2n+1}$ such that $ ( m_{k+1}, \dots, m_{2n}, e_{2n+1} )$ is an orthonormal basis of $\M_x$. We have that
\beq\label{eq:c(x)}
c(x)=  \frac{ J_Hf(x)}{J_{\V}f(x)} = \frac{1}{\Vert V \wedge N_x \Vert_g}.
\eeq
Since $\spn \{ \nabla_{H} f_1(x), \dots, \nabla_{H}f_k(x) \}$
is orthogonal to $\M_x$, it is a standard fact that 
\begin{equation}
\label{ker2}
 m_{k+1} \wedge \dots \wedge m_{2n} \wedge e_{2n+1} = \ast ( \nabla_{H} f_1(x) \wedge \dots \wedge \nabla_{H}f_k (x)) \lambda
\end{equation}
for some $\lambda \in \mathbb{R}$, see for instance {\cite[Lemma 5.1]{Magnani2008}}. Here we have defined the Hodge operator $*$ in $\mathbb{H}^n$ with respect to the fixed orientation $$\textbf{e}= e_1 \wedge \dots e_{2n} \wedge e_{2n+1} $$ and the fixed scalar product $\langle \cdot, \cdot \rangle$. Precisely, we are referring to an orthonormal Heisenberg basis $(e_1, \dots, e_{2n}, e_{2n+1})$, according to Sections~\ref{sect:symplectic} and \ref{sect:metric}. Therefore $ \ast \eta$ is the unique $(2n+1-k)$-vector such that 
\begin{equation}
\label{eq10}
\xi \wedge \ast \eta= \langle \xi, \eta \rangle  \ \textbf{e}
\end{equation}
 for all $k$-vectors $\xi$. Since the Hodge operator is an isometry, we get
\begin{equation}
\label{eq9}
 | \lambda |= \frac{1}{\Vert \nabla_{H}f_1 (x)\wedge \dots \nabla_{H}f_k(x) \Vert_g}.
\end{equation}
Due to (\ref{eq9}) and (\ref{eq10}), we have
\begin{equation*}
\begin{split}
\Vert V \wedge N_x \Vert_g &= | \lambda|  \Vert \Vert v_1 \wedge \dots \wedge v_k \wedge ( \ast ( \nabla_{H}f_1 (x)\wedge \dots \wedge \nabla_{H}f_k (x))) \Vert_g \\
&= \frac{\Vert \langle v_1 \wedge \dots \wedge v_k, \nabla_{H}f_1(x) \wedge \dots \wedge \nabla_{H}f_k(x) \rangle \textbf{e} \Vert_g}{ \Vert  \nabla_{H}f_1 (x)\wedge \dots \wedge \nabla_{H}f_k (x)\Vert_g  }\\
&= \frac{| \langle v_1 \wedge \dots \wedge v_k, \nabla_{H}f_1(x) \wedge \dots \wedge \nabla_{H}f_k(x) \rangle|}{ \Vert  \nabla_{H}f_1 (x)\wedge \dots \wedge \nabla_{H}f_k (x)\Vert_g  }\\
&= \frac{\Vert \nabla_{\V} f_1 (x)\wedge \dots \wedge \nabla_{\V} f_k(x) \Vert_g}{ \Vert  \nabla_{H}f_1(x) \wedge \dots \wedge \nabla_{H}f_k(x) \Vert_g  }\\
&=\frac{J_{\V}f(x)}{J_Hf(x)},
\end{split}
\end{equation*}
hence establishing Claim 2.

As a result, taking into account \eqref{eq:theta}, we have proved that
\beq
\theta^{2n+2-k}(\mu ,x)  \leq \frac{ \Vert V \wedge N \Vert_g}{\Vert V \wedge N_x \Vert_g} \ \mathcal{H}^{2n+1-k}_E(S_z).
\eeq
By Lemma~\ref{l:MVPsubgroups}, for $B=\M_x\cap\B(z,1)$, the following formula holds
\beq\label{eq:LemmaAlg} 
\mathcal{H}^{2n+1-k}_E(\pi_{\W, \M_x}^{\W, \V}(\M_x\cap\B(z,1)))= \frac{\Vert V \wedge N_x \Vert_g}{\Vert V \wedge N \Vert_g} \mathcal{H}^{2n+1-k}_E(\M_x\cap\B(z,1)).
\eeq
It follows that 
\beq\label{eq:firstIneq}
\theta^{2n+2-k}(\mu ,x) \leq\mathcal{H}^{2n+1-k}_E(\M_x \cap \mathbb{B}(z,1)) \leq  \beta_d(\M_x).
\eeq
To prove the opposite inequality, we follow the approach of \cite[Theorem 3.1]{Magnani2017}.
We choose $z_0 \in \mathbb{B}(0,1)$ such that
\begin{equation}\label{eq:beta_d}
\beta_d(\M_x)= \mathcal{H}^{2n+1-k}_E(\M_x \cap \mathbb{B}(z_0,1) )
\end{equation}
and consider a specific family of points $y_t^0= x \delta_t z_0 \in \mathbb{B}(x,t)$. For a fixed $\lambda>1$, we have
$$
\sup_{0<t<r} \frac{\mu(\mathbb{B}(y_t^0,\lambda t))}{(\lambda t)^{2n+2-k}} \leq \sup_{\substack{y \in \mathbb{B}(x,t), \\ 0 < t < \lambda r}} \frac{\mu(\mathbb{B}(y,t))}{t^{2n+2-k}}
$$
for every $r>0$ sufficiently small, therefore
\beq\label{eq:limsuptheta}
\limsup_{t \to 0^+} \frac{\mu(\mathbb{B}(y_t^0,\lambda t) )}{(\lambda t)^{2n+2-k}} \leq \theta^{2n+2-k}(\mu ,x).
\eeq
We introduce the set 
\begin{equation*}
\begin{split}
A^0_t &= \Lambda_{1/\lambda t}(\sigma_x^{-1}(\Phi^{-1}(\mathbb{B}(y_t^0, \lambda t)))\\
& = \left\{ \eta \in \Lambda_{1/\lambda t} (\sigma_{x}^{-1}(U)) :  \eta  \left( \frac{\phi_{{x}^{-1}}(\Lambda_{\lambda t} \eta)}{\lambda t} \right) \in \mathbb{B}(\delta_{1/ \lambda} z_0, 1)    \right\}.
\end{split}
\end{equation*}
The second equality can be deduced from \eqref{eq:setLambda}.
Then we can rewrite
\begin{equation}\label{eq:mu}
\begin{split}
\frac{\mu(\mathbb{B}(y_t^0,\lambda t))}{(\lambda t)^{2n+2-k}} &= \Vert V \wedge N \Vert_g \int_{A_t^0} \alpha( \lambda t, \eta) d \mathcal{H}^{2n+1-k}_E( \eta) \\
&= \frac{\Vert V \wedge N \Vert_g}{\lambda^{2n+2-k}} \int_{\delta_{\lambda} A_t^0} \alpha( \lambda t, \delta_{1/\lambda} \eta) d \mathcal{H}^{2n+1-k}_E(\eta)
\end{split}
\end{equation}
The domain of integration satisfies
$$ \delta_{\lambda} A_t^0= \left\{ \eta \in \Lambda_{1/t}(\sigma_{x}^{-1}(U)) : \eta \left( \frac{\phi_{{x}^{-1}}(\Lambda_{ t} \eta)}{ t} \right) \in \mathbb{B}(z_0, \lambda) \right\}.$$
Due to \eqref{insieme} and the definition of $A^0_t$, we get  
\[
\delta_{\lambda} A_t^0 \subset \mathbb{B}_\W(0,\lambda R_0).
\]
\textbf{Claim 3:} For every $\eta \in \pi_{\W, \M_x}^{\W, \V}( \M_x \cap B(z_0, \lambda))$, we have
\beq\label{eq:limA^0=1}
\lim_{t \to 0^+}\bu_{\delta_{\lambda} A_t^0} (\eta)=1.
\eeq	
The intrinsic differentiability of $\phi$ at $\zeta$ shows that
\[
\eta \left( \frac{\phi_{{x}^{-1}}(\Lambda_{ t} \eta)}{ t} \right)\to \eta d\phi_\zeta(\eta)\quad\text{as}\quad t\to0.
\]
Taking into account \eqref{eq:invFactor} and \eqref{eq:piS_z}, we get
\[
\pi^{\M_x,\V}_{\M_x,\W}(\eta)=\eta d\phi_\zeta(\eta),
\]
hence our assumption on $\eta$ can be written as follows
\[
d\pa{\eta d\phi_\zeta(\eta),z_0}<\lambda.
\]
We conclude that $\eta\in \delta_{\lambda} A_t^0$ for any $t>0$ sufficiently small, therefore the limit \eqref{eq:limA^0=1} holds and the proof of Claim 3 is complete.

By Fatou's lemma, taking into account \eqref{eq:limsuptheta} and \eqref{eq:mu}
we get
\[
\frac{\Vert V \wedge N \Vert_g}{\lambda^{2n+2-k}} 
\int_{\pi_{\W, \M_x}^{\W, \V}( \M_x \cap B(z_0, \lambda))} \liminf_{t\to0}\pa{\bu_{\delta_{\lambda} A_t^0}(\eta) \alpha( \lambda t, \delta_{1/\lambda} \eta)} d \mathcal{H}^{2n+1-k}_E(\eta)\le \theta^{2n+2-k}(\mu ,x).
\]
Claim 3 joined with \eqref{eq:c(x)} yield
\[
\frac1{\lambda^{2n+2-k}} \frac{\Vert V \wedge N \Vert_g}{\Vert V \wedge N_x \Vert_g}
\mathcal{H}^{2n+1-k}_E\pa{\pi_{\W, \M_x}^{\W, \V}( \M_x \cap\B(z_0,1))}\le \theta^{2n+2-k}(\mu ,x).
\]
Applying again \eqref{eq:LemmaAlg}, we obtain
\[
\frac1{\lambda^{2n+2-k}} \mathcal{H}^{2n+1-k}_E(\M_x\cap\B(z_0,1))\le \theta^{2n+2-k}(\mu ,x).
\]
Taking the limit as $\lambda\to1^+$, considering \eqref{eq:beta_d}
and taking into account Proposition~\ref{prop:Tan},
the proof of \eqref{eq:equalityBetaSigma} is complete.
\end{proof}

The computation of the upper density \eqref{eq:ThetaUpperDensity} is simpler than 
computing the spherical Federer density.
In a sense, we have less degrees of freedom,
since the center of the ball for this density is fixed.
As a byproduct of our approach,
the following theorem
can be achieved by some simplifications in 
the proof of Theorem~\ref{Maintheorem}, getting a ``centered blow-up''.

\begin{teo}
\label{Maintheoremcentrato}
In the assumptions of Theorem \ref{Maintheorem}, for every $x \in \Sigma$, we have
$$ \Theta^{*2n+2-k}( \mu, x) =  \mathcal{H}^{2n+1-k}_E(\Tan(\Sigma,x) \cap \mathbb{B}(0,1)),
$$
where the metric ball $\B(0,1)$ refers to the fixed 
homogeneous distance $d$.
\end{teo}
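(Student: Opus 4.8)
The plan is to retrace the proof of Theorem~\ref{Maintheorem}, specialized to a fixed center. The key observation is that in the upper density \eqref{eq:ThetaUpperDensity} the ball is always centered at $x$, which forces the auxiliary point $z$ of that proof to be $0$; so I would not need to extract a subsequence $y_p\to x$ nor pass to a limiting center $z\in\mathbb{B}(0,1)$, and I would simply work with $\mathbb{B}(x,t)$ throughout. Keeping the notation $\M_x=\ker Df(x)$ and $\zeta=\Phi^{-1}(x)$, I would perform the same change of variables $n=\sigma_x(\Lambda_t(\eta))$ (unit Jacobian for $\sigma_x$, Jacobian $t^{2n+2-k}$ for $\Lambda_t$) to write
\[
\frac{\mu(\mathbb{B}(x,t))}{t^{2n+2-k}}=\Vert V\wedge N\Vert_g\int_{A_t}\alpha(t,\eta)\,d\mathcal{H}_E^{2n+1-k}(\eta),\qquad A_t=\Lambda_{1/t}(\sigma_x^{-1}(\Phi^{-1}(\mathbb{B}(x,t)))),
\]
with $\alpha(t,\eta)=J_Hf(\Phi(\sigma_x(\Lambda_t\eta)))/J_\V f(\Phi(\sigma_x(\Lambda_t\eta)))$. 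Setting $y=x$ in \eqref{eq:setLambda} makes $\delta_{1/t}(y^{-1}x)=0$, so $A_t$ is exactly the domain of the proof of Theorem~\ref{Maintheorem} with $z=0$, and by \eqref{insieme} it lies in $\mathbb{B}_\W(0,R_0)$.

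For the upper bound I would put $S_0=\pi_{\W,\M_x}^{\W,\V}(\M_x\cap\mathbb{B}(0,1))$ and, along any sequence $t_p\to0$, split the integral over $A_{t_p}\cap S_0$ and $A_{t_p}\setminus S_0$. Since $\alpha(t_p,\eta)\to c(x)=J_Hf(x)/J_\V f(x)$ uniformly on bounded sets, the first piece is at most $\int_{S_0}\alpha(t_p,\eta)\,d\mathcal{H}_E^{2n+1-k}\to c(x)\,\mathcal{H}_E^{2n+1-k}(S_0)$, while the second tends to $0$ by Claim~1 (which holds verbatim for $z=0$) and dominated convergence on $\mathbb{B}_\W(0,R_0)$. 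This gives $\Theta^{*2n+2-k}(\mu,x)\le\Vert V\wedge N\Vert_g\,c(x)\,\mathcal{H}_E^{2n+1-k}(S_0)$, and inserting \eqref{eq:c(x)} together with \eqref{eq:LemmaAlg} for $z=0$ collapses the constants to yield $\Theta^{*2n+2-k}(\mu,x)\le\mathcal{H}_E^{2n+1-k}(\M_x\cap\mathbb{B}(0,1))$.

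For the reverse inequality I would repeat the Fatou argument of Theorem~\ref{Maintheorem} with $z_0=0$, using the comparison radii $\lambda t$ for a fixed $\lambda>1$. With center $x$ the dilated domain becomes $\delta_\lambda A_t^0=\{\eta:\eta(\phi_{x^{-1}}(\Lambda_t\eta)/t)\in\mathbb{B}(0,\lambda)\}$, and Claim~3 (with $z_0=0$) gives $\bu_{\delta_\lambda A_t^0}(\eta)\to1$ for every $\eta\in\pi_{\W,\M_x}^{\W,\V}(\M_x\cap\mathbb{B}(0,1))$, the strict inequality $\lambda>1$ ensuring convergence even on the closed unit ball. Fatou's lemma, followed by \eqref{eq:c(x)} and \eqref{eq:LemmaAlg}, would then produce
\[
\Theta^{*2n+2-k}(\mu,x)\ge\liminf_{t\to0}\frac{\mu(\mathbb{B}(x,\lambda t))}{(\lambda t)^{2n+2-k}}\ge\frac{1}{\lambda^{2n+2-k}}\,\mathcal{H}_E^{2n+1-k}(\M_x\cap\mathbb{B}(0,1)),
\]
and letting $\lambda\to1^+$ would close the lower bound. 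Combining the two estimates and recalling $\M_x=\Tan(\Sigma,x)$ from Proposition~\ref{prop:Tan} finishes the proof.

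I do not expect a genuinely new obstacle here, since every ingredient—the change of variables, Claims~1--3 and Lemma~\ref{l:MVPsubgroups}—is reused unchanged. The one point deserving care is that fixing the center removes the supremum over nearby centers, so I must check that the resulting density is still pinched between the same two bounds; the role formerly played by the freedom in $z$ is now played solely by the auxiliary dilation $\lambda\to1^+$, which is precisely what disposes of the metric-sphere boundary in the Fatou step.
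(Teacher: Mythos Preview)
Your proposal is correct and follows precisely the approach the paper indicates: the paper does not give a separate proof of Theorem~\ref{Maintheoremcentrato} but states that it ``can be achieved by some simplifications in the proof of Theorem~\ref{Maintheorem}, getting a `centered blow-up','' and your argument is exactly that simplification, fixing $y=x$ (hence $z=z_0=0$) and reusing Claims~1--3, formula~\eqref{eq:c(x)} and Lemma~\ref{l:MVPsubgroups} verbatim. Your handling of the boundary via the auxiliary dilation $\lambda>1$ in the Fatou step is the correct way to recover the closed ball $\M_x\cap\mathbb{B}(0,1)$ in the lower bound.
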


\section{Some special cases for the area formula}\label{sect:area}

In this section, we analyze some consequences of 
the upper blow-up (Theorem~\ref{Maintheorem}).
We consider two cases: when the factors $\W$ and $\V$
are orthogonal and when the metric unit ball 
of the homogeneous distance is convex.
In the first case the measure
$\mu$ can be represented by the intrinsic derivatives of the parametrization, according to Theorem~\ref{areaintder}.

\begin{proof}[Proof of Theorem~\ref{areaintder}]
Since $\W$ and $\V$ are orthogonal, by Proposition \ref{Heisbasis} we can fix a Heisenberg basis $  (v_1, \dots, v_k,  v_{k+1}, \dots,v_n, w_1, \dots w_{2n}, e_{2n+1}) $ such that $\V= \spn \{ v_1, \dots, v_k \} $ and 
$\W= \spn \{ v_{k+1}, \dots, v_n, w_i, \dots, w_n, e_{2n+1} \}$. Our claim follows by representing the measure $\mu$ in terms of the intrinsic partial derivatives of the parametrization $\phi$ of $\Sigma$, arguing as in the proof {\cite[Theorem 6.1]{Intsurfaces}}. For the reader's convenience we report the main points of the proof.

Taking into account Theorem \ref{t:ArenaSerapioni}, $\Sigma=\Phi(\Omega)$ is the graph of a uniformly intrinsic differentiable function $\phi$. Arguing as in the proof of \cite[Theorem 4.1]{DiDonatoArt} or \cite[Theorem 4.2]{Arena}, there exist an open set $\Omega'\subset\H^n$ and a function $g \in C^1_h(\Omega',\R^k)$ such that $\Sigma \subset g^{-1}(0) $ and for every $m \in U$ the following holds
\begin{equation}
\label{didonato}
Dg(\Phi(m))= \begin{bmatrix} \nabla_{H}g_1(\Phi(m)) \\
\dots \\
\nabla_{H}g_k(\Phi(m)) 
\end{bmatrix}= \begin{bmatrix} \mathbb{I}_{k}  &-D^{\phi}\phi(m) & 0\\
\end{bmatrix},
\end{equation}
where $0$ denotes the vanishing column in the previous matrix. By Theorem \ref{teo:areaformulagenerale}, for any Borel set $B \subset \Sigma$, 
$$\mu(B)= \int_B \beta_{d}(\Tan(\Sigma,x)) \ d \mathcal{S}^{2k+2-k}(x)=\int_{\Phi^{-1}(B)} \frac{J_{H}g(\Phi(n))}{J_{\V}g(\Phi(n))} d \mathcal{H}^{2n+1-k}_E(n).$$
Notice that $J_{\V}g(\Phi(m))=1$ for every $m \in U$. By Definition \ref{intjacobian}, taking into account the form of $Dg(\Phi(m))$ in (\ref{didonato}), the proof is achieved.
\end{proof}

Combining Theorem~\ref{abstractdiffcent} and Theorem~\ref{Maintheoremcentrato}, we also get the area formula for the centered Hausdorff measure. It is the analogous of Theorem~ \ref{teo:areaformulagenerale}, where
the spherical measure is replaced by the centered
Hausdorff measure.
For the distance $d_\infty$,
the following theorem coincides with \cite[Theorem 4.1]{Areaformula}.

\begin{teo}
\label{Areaformulacent}
In the assumptions of Theorem \ref{Maintheorem}, for any Borel set $B \subset \Sigma $ we have
\begin{equation}\label{eq:areaC}
\mu(B)= \int_B  \mathcal{H}^{2n+1-k}_E(\Tan(\Sigma,x) \cap \mathbb{B}(0,1)) \ d \mathcal{C}^{2k+2-k}(x),
\end{equation}
where the metric ball $\B(0,1)$ refers to the fixed homogeneous distance $d$.
\end{teo}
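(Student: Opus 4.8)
The plan is to combine the metric area formula for the centered Hausdorff measure (Theorem~\ref{abstractdiffcent}) with the centered blow-up (Theorem~\ref{Maintheoremcentrato}), as announced just before the statement. We work with the homogeneous dimension $\alpha=2n+2-k$. Since $\Phi$ is a homeomorphism of $U$ onto $\Sigma$, the measure $\mu$ in \eqref{eq:areamu} is the pushforward under $\Phi$ of a measure absolutely continuous with respect to $\mathcal{H}^{2n+1-k}_E\res U$, hence it is Borel regular and concentrated on $\Sigma$. The only real work is to verify the hypotheses of Theorem~\ref{abstractdiffcent} and then to pass from a local to a global statement.

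First I would check the covering condition. Since $\Phi$ is proper --- by \eqref{eq:estimnormsub} we have $c_0\|n\|\le\|n\phi(n)\|$, so $\Phi^{-1}(\mathbb{B}(0,R))$ is a bounded subset of $\W$ --- and the integrand $n\mapsto(J_Hf/J_\V f)(\Phi(n))$ is continuous, hence bounded on bounded sets, the measure $\mu$ is finite on every bounded subset of $\H^n$. Thus $\H^n$ admits a countable open covering by sets of finite $\mu$-measure, as required.

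Next I would use the centered blow-up to control the density. By Theorem~\ref{Maintheoremcentrato},
\[
\Theta^{*2n+2-k}(\mu,x)=\mathcal{H}^{2n+1-k}_E(\Tan(\Sigma,x)\cap\mathbb{B}(0,1))\qquad\text{for every }x\in\Sigma.
\]
Since $\Tan(\Sigma,x)=\ker Df(x)$ is a $(2n+1-k)$-dimensional vertical subgroup, the right-hand side is finite and strictly positive. Moreover $x\mapsto\ker Df(x)$ is continuous, because $f\in C^1_h(\Omega,\R^k)$, so $x\mapsto\mathcal{H}^{2n+1-k}_E(\Tan(\Sigma,x)\cap\mathbb{B}(0,1))$ is continuous and hence bounded away from $0$ and $\infty$ on every compact subset of the closed set $\Sigma$. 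The finiteness of the upper density yields, by the standard density comparison for the centered Hausdorff measure, that $\mu\res\Sigma$ is absolutely continuous with respect to $\mathcal{C}^{2n+2-k}\res\Sigma$; the strictly positive lower bound on compacta yields, by the same comparison, an estimate of the form $\mathcal{C}^{2n+2-k}(\Sigma\cap\mathbb{B}(0,j))\le c_j^{-1}\,\mu(\Sigma\cap\mathbb{B}(0,j))<\infty$ for suitable $c_j>0$.

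Finally I would localize and conclude. For each $j$ the compact set $A_j=\Sigma\cap\mathbb{B}(0,j)$ satisfies all the hypotheses of Theorem~\ref{abstractdiffcent}, so for every Borel set $B\subset A_j$,
\[
\mu(B)=\int_B\Theta^{*2n+2-k}(\mu,x)\,d\mathcal{C}^{2n+2-k}(x)=\int_B\mathcal{H}^{2n+1-k}_E(\Tan(\Sigma,x)\cap\mathbb{B}(0,1))\,d\mathcal{C}^{2n+2-k}(x),
\]
where the second equality is Theorem~\ref{Maintheoremcentrato}. Since both sides are concentrated on $\Sigma=\bigcup_jA_j$ and $A_j\uparrow\Sigma$, a $\sigma$-additivity (monotone convergence) argument extends the identity to every Borel set $B\subset\Sigma$, which is \eqref{eq:areaC}. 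The one point requiring genuine care is this localization, together with the positive lower bound on the density that guarantees the local finiteness of $\mathcal{C}^{2n+2-k}$ on $\Sigma$; everything else is a direct transcription of the two quoted theorems.
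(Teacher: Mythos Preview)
Your proposal is correct and follows exactly the route the paper indicates: the paper offers no explicit proof, merely the sentence ``Combining Theorem~\ref{abstractdiffcent} and Theorem~\ref{Maintheoremcentrato}, we also get the area formula for the centered Hausdorff measure,'' and you have supplied the verification of hypotheses (local finiteness of $\mu$, absolute continuity, local finiteness of $\mathcal{C}^{2n+2-k}$ on $\Sigma$) and the localization argument that this sentence leaves implicit. One small caution: the line ``continuous, hence bounded on bounded sets'' is not literally valid on the open domain $U$, since $\Phi^{-1}(\mathbb{B}(0,R))$ need not be compactly contained in $U$; the clean fix is to cover $U$ by relatively compact open subsets and push them forward by $\Phi$, which still yields the required countable open covering of finite $\mu$-measure.
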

As a consequence, using the previous formula, along with Theorem~\ref{teo:areaformulagenerale} and Theorem~\ref{t:pallaconvex}, we can show the equality between spherical measure and centered Hausdorff measure.

\begin{teo}\label{coincidence}
Let $d$ be a homogeneous distance on $\mathbb{H}^n$ such that $\mathbb{B}(0,1)$ is convex.
Let $\Sigma$ be a parametrized $\H$-regular surface with respect
to $(\W,\V)$. Then for every $x \in \Sigma$ we obtain $\Theta^{*2n+2-k}(\mu,x)= \theta^{2n+2-k}(\mu,x)$ and
in particular
\beq
\mathcal{C}^{2n+2-k} \res \Sigma= \mathcal{S}^{2n+2-k} \res \Sigma.
\eeq
\end{teo}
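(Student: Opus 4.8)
The plan is to reduce the statement to three results already at our disposal and then to invert two area formulas. First I would fix $x\in\Sigma$ and record, via Proposition~\ref{prop:Tan}, that $\Tan(\Sigma,x)=\ker Df(x)$ is a $(2n+1-k)$-dimensional vertical subgroup of $\H^n$. The centered blow-up of Theorem~\ref{Maintheoremcentrato} gives
\[
\Theta^{*2n+2-k}(\mu,x)=\mathcal{H}^{2n+1-k}_E(\Tan(\Sigma,x)\cap\mathbb{B}(0,1)),
\]
while the upper blow-up of Theorem~\ref{Maintheorem} gives $\theta^{2n+2-k}(\mu,x)=\beta_d(\Tan(\Sigma,x))$. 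The hypothesis that $\mathbb{B}(0,1)$ is convex is exactly what lets me collapse the spherical factor onto the centered slice: since $\Tan(\Sigma,x)$ is a $(2n+1-k)$-dimensional vertical subgroup, Theorem~\ref{t:pallaconvex} yields $\beta_d(\Tan(\Sigma,x))=\mathcal{H}^{2n+1-k}_E(\Tan(\Sigma,x)\cap\mathbb{B}(0,1))$. Chaining the three identities proves $\Theta^{*2n+2-k}(\mu,x)=\theta^{2n+2-k}(\mu,x)$ for every $x\in\Sigma$.

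For the measure identity I would set $\rho(x)=\beta_d(\Tan(\Sigma,x))$, which by the previous paragraph also equals $\mathcal{H}^{2n+1-k}_E(\Tan(\Sigma,x)\cap\mathbb{B}(0,1))$. Both area formulas apply to the parametrized $\H$-regular surface $\Sigma$: the spherical version (Theorem~\ref{teo:areaformulagenerale}) and the centered version (Theorem~\ref{Areaformulacent}) represent the same measure $\mu$ with the same density $\rho$ against $\mathcal{S}^{2n+2-k}$ and $\mathcal{C}^{2n+2-k}$, respectively. Thus for every Borel set $B\subset\Sigma$
\[
\int_B \rho\,d\mathcal{S}^{2n+2-k}=\mu(B)=\int_B \rho\,d\mathcal{C}^{2n+2-k},
\]
that is, $\mu\res\Sigma=\rho\,(\mathcal{S}^{2n+2-k}\res\Sigma)=\rho\,(\mathcal{C}^{2n+2-k}\res\Sigma)$.

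Finally I would invert the common density. A metric unit ball is contained in, and contains, a Euclidean ball, so $x\mapsto\rho(x)$ is bounded above by a constant and below by a positive constant depending only on $d$, $n$ and $k$; hence $1/\rho$ is a bounded Borel function and $\mu\res\Sigma$ is $\sigma$-finite. For any Borel set $A\subset\Sigma$,
\[
\mathcal{S}^{2n+2-k}(A)=\int_A \frac{1}{\rho}\,d\big(\rho\,\mathcal{S}^{2n+2-k}\big)=\int_A \frac{1}{\rho}\,d\mu=\int_A \frac{1}{\rho}\,d\big(\rho\,\mathcal{C}^{2n+2-k}\big)=\mathcal{C}^{2n+2-k}(A),
\]
where the middle equalities use the two representations of $\mu\res\Sigma$ obtained above. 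This gives $\mathcal{C}^{2n+2-k}\res\Sigma=\mathcal{S}^{2n+2-k}\res\Sigma$; equivalently, one may quote uniqueness of Radon--Nikodym derivatives with respect to $\mu\res\Sigma$.

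Since nearly all the analytic weight is carried by the blow-up theorems and the area formulas, the only genuine obstacle on our side is the convexity input: Theorem~\ref{t:pallaconvex} is what forces the maximal slice of the convex unit ball by the parallel family of translates of $\Tan(\Sigma,x)$ to pass through the center, turning $\beta_d$ into the centered intersection. The remaining step, dividing by $\rho$, is routine once $\rho$ is known to be positive and finite, which I verify through the comparison between the homogeneous and the Euclidean balls.
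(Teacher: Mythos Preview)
Your proposal is correct and follows essentially the same approach as the paper: combine Theorem~\ref{Maintheoremcentrato}, Theorem~\ref{Maintheorem} and Theorem~\ref{t:pallaconvex} to get the equality of densities, and then compare the two area formulas \eqref{eq:areadgenerica} and \eqref{eq:areaC}. The paper simply states that ``the area formulas conclude the proof'' without spelling out the division by $\rho$; your added justification that $\rho$ is bounded away from $0$ and $\infty$ (via the inclusion of the metric unit ball between two Euclidean balls) makes that final step explicit and is a welcome clarification.
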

\begin{proof}
By Theorem~\ref{t:pallaconvex} and Theorem~\ref{Maintheoremcentrato}, for every $x \in \Sigma$ we have
$$\beta_d(\ker(Df(x))= \mathcal{H}^{2n+1-k}(\ker(Df(x)) \cap \mathbb{B}(0,1))= \Theta^{*2n+2-k}( \mu, x).$$
Then the area formulas \eqref{eq:areadgenerica} and \eqref{eq:areaC} conclude the proof.
\end{proof}

\bibliography{biblio_Area}
\bibliographystyle{plain}

\end{document}